\documentclass[a4paper,11pt]{article}
\usepackage[T1]{fontenc}
\usepackage[utf8]{inputenc}
\usepackage[text={160mm,248mm},centering]{geometry}
\usepackage{parskip,tikz-cd,microtype}
\usepackage{hyperref,amsthm,enumitem}

\usepackage[theoremfont]{stickstootext}
\usepackage[stix2,smallerops]{newtxmath}
\usepackage[cal=stixtwoplain]{mathalfa}
\tikzcdset{arrow style=tikz, diagrams={>={Straight Barb[scale=0.7]}}} 
\DeclareFontEncoding{FMX}{}{}
\DeclareFontSubstitution{FMX}{futm}{m}{n}
\DeclareSymbolFont{fourier}{FMX}{futm}{m}{n}
\let\sum\relax\DeclareMathSymbol{\sum}{\mathop}{fourier}{80}
\let\prod\relax\DeclareMathSymbol{\prod}{\mathop}{fourier}{81}

\numberwithin{equation}{section}
\theoremstyle{plain}
\newtheorem{theorem}[equation]{Theorem}
\newtheorem{lemma}[equation]{Lemma}
\newtheorem{corollary}[equation]{Corollary}
\theoremstyle{definition}
\newtheorem{situation}[equation]{Situation}
\newtheorem{remark}[equation]{Remark}
\newtheorem{question}[equation]{Question}
\newtheorem{definition}[equation]{Definition}
\newtheorem{example}[equation]{Example}

\makeatletter
\renewenvironment{proof}[1][\proofname] {\par\pushQED{\qed}\normalfont\topsep6\p@\@plus6\p@\relax\trivlist\item[\hskip\labelsep\bfseries#1\@addpunct{.}]\ignorespaces}{\popQED\endtrivlist\@endpefalse}
\makeatother

\renewcommand{\mathbb}{\mathbf}
\renewcommand{\setminus}{\mathbin{\rule[0.2em]{0.67em}{0.12em}}}%
\renewcommand{\geq}{\geqslant}
\renewcommand{\leq}{\leqslant}

\linespread{1.15}

\title{Hodge and Frobenius colevels of algebraic varieties}

\author{Daqing Wan%
  \and Dingxin Zhang}%


\date{April 5, 2025}

\begin{document}

\setcounter{tocdepth}{1}

\maketitle

\begin{abstract}
\noindent
We provide new, improved lower bounds for the Hodge and Frobenius
colevels of algebraic varieties (over $\mathbb{C}$ or over a finite
field) in all cohomological degrees.  These bounds are expressed in
terms of the dimension of the variety and multi-degrees of its
defining equations.  Our results lead to an enhanced positive answer
to a question raised by Esnault and the first author.

\medskip\noindent
2020 \textit{Mathematics Subject Classification.}
Primary: \href{https://zbmath.org/classification/?q=cc%3A14F20}{14F20};
Secondary:
\href{https://zbmath.org/classification/?q=cc%3A11M38}{11M38},
\href{https://zbmath.org/classification/?q=cc%3A14G15}{14G15},
\href{https://zbmath.org/classification/?q=cc%3A32S35}{32S35}.
\end{abstract}

\tableofcontents

\section{Background and motivation}

These notes present our investigation into the cohomological colevels
of algebraic varieties in both the Betti and \(\ell\)-adic contexts,
motivated by our previous \(p\)-adic findings~\cite{wz22}.  We provide
new, improved lower bounds for the Hodge and Frobenius colevels of
algebraic varieties (over \(\mathbb{C}\) or over the algebraic closure
of a finite field) across all cohomological degrees.

These bounds are formulated in terms of the dimension and the
multi-degrees of the defining equations of the variety.  For a
detailed explanation, refer to Theorems~\ref{theorem:beyond},
\ref{theorem:before}, and \ref{theorem:projective-case}.  These
theorems offer an enhanced affirmative response to a question
(Question~\ref{question}) posed by Esnault and the first author in
\cite{esnault-wan:divisibility}.

Before examining the specific results presented in Section
\ref{sec:improved-bounds}, this introductory section will review some
background material, and illustrate the various aspects of the
problems we are about to address.

\subsection{Hodge colevel and Frobenius colevel}

Let us recall the definitions of \emph{cohomological colevel} in two
different contexts.

\begin{definition}[Frobenius colevel]
Let \(k\) be an algebraic closure of the prime field
\(\mathbb{F}_{p}\), and let \(\ell \neq p\) be another prime number.
Let $X$ be a finite-type, separated scheme over $k$.  Assume
\(\mathbb{F}_{q}\) is a finite subfield of \(k\) such that \(X\) is
defined over \(\mathbb{F}_{q}\), i.e.,
\(X = X_{0} \otimes_{\mathbb{F}_{q}} k\) for some
\(\mathbb{F}_{q}\)-scheme \(X_{0}\).  Let \(F_{q}\colon X \to X\)
denote the geometric Frobenius (over \(\mathbb{F}_{q}\)).  The
operator \(F_{q}\) acts on the \(\ell\)-adic cohomology
\(\mathrm{H}_{c}^{i}(X;\mathbb{Q}_{\ell})\) by transporting the
structures.  We say that \(\mathrm{H}^i_{c}(X;\mathbb{Q}_{\ell})\) has
\emph{(Frobenius) colevel} \(\geq m\) if all Frobenius eigenvalues are
divisible by \(q^{m}\) in the ring of algebraic integers.

Note that the condition of having a Frobenius colevel \(\geq m\) is
geometric in nature: if \(X\) is defined over
\(\mathbb{F}_{q^{\prime}}\), another finite field of characteristic
\(p\), then the eigenvalues of \(F_{q}\) acting on
\(\mathrm{H}^{i}(X;\mathbb{Q}_{\ell})\) are divisible by \(q^{m}\) if
and only if the eigenvalues of \(F_{q^{\prime}}\) acting on
\(\mathrm{H}^{i}(X;\mathbb{Q}_{\ell})\) are divisible by
\(q^{\prime m}\).
\end{definition}

\begin{definition}[Hodge colevel]
Assume \(k = \mathbb{C}\).  Let $X$ be a finite-type, separated scheme
over $\mathbb{C}$, with associated analytic space $X^{\text{an}}$.
The compactly supported cohomology
\(\mathrm{H}^i_{c}(X^{\mathrm{an}};\mathbb{Q})\) is endowed with a
canonical mixed Hodge structure.  We say that
\(\mathrm{H}^{i}_{c}(X^{\mathrm{an}};\mathbb{Q})\) has \emph{(Hodge)
  colevel} \(\geq m\) if the Hodge filtration
\(F^{\bullet}\mathrm{H}^{i}_{c}(X^{\mathrm{an}};\mathbb{C})\) on
\(\mathrm{H}^{i}_{c}(X^{\mathrm{an}};\mathbb{C})\) satisfies the
condition
\(F^{m}\mathrm{H}^{i}_{c}(X^{\mathrm{an}};\mathbb{C}) =
\mathrm{H}^{i}_{c}(X^{\mathrm{an}};\mathbb{C})\).%
\footnote{ These colevel conditions can also be defined for
  ``abstract'' mixed Hodge structures or Frobenius modules.  Let
  \(r \in \mathbb{Z}\) and \(K\) be a field.  We say that a filtered
  \(K\)-vector space \((V, \mathrm{Fil}^{\bullet}V)\) has (Hodge)
  colevel \(\geq r\) if \(V = \mathrm{Fil}^{r}V\).  If \(V\) is a
  finite-dimensional \(K[F]\)-module, we say that \(V\) has
  (Frobenius) colevel \(\geq r\) if all the eigenvalues of \(F\) can
  be expressed as \(q^{r} \cdot \beta\), where \(\beta\) is some
  algebraic integer.

  Dually, we say that a filtered vector space
  \((V, \mathrm{Fil}^{\bullet}V)\) has (Hodge) level \(\leq r\) if
  \(\mathrm{Fil}^{r+1}V = 0\); we say that a finite-dimensional
  \(K[F]\)-module \(V\) has (Frobenius) level \(\leq r\) if, for any
  eigenvalue \(\alpha\) of \(F\), \(q^{r} \cdot \alpha^{-1}\) is an
  algebraic integer.  }
\end{definition}

In the sequel, we will use the notation \(\mathrm{H}^i_{c}(X)\) to
represent \(\mathrm{H}^i_{c}(X^{\mathrm{an}};\mathbb{Q})\) or
\(\mathrm{H}^i_{c}(X;\mathbb{Q}_{\ell})\) when the base field \(k\) is
either irrelevant to the discussion or clearly understood from the
context.

If \(X\) is proper over \(k\), then the compactly supported cohomology
\(\mathrm{H}^{\ast}_{c}(X)\) is the same as the usual cohomology.  In
such cases, we will omit the subscript \(c\).

\begin{example}%
The colevel of \(\mathrm{H}^{2i}(\mathbb{P}^n)\) is \(\geq i\).  If
\(X\) is a variety over \(k\) of dimension \(\leq n\), then the
colevel of \(\mathrm{H}_{c}^{2n}(X)\) is \(\geq n\).

When \(k = \mathbb{C}\) and \(X\) is both smooth and projective, the
statement that
\(\mathrm{H}^i_{c}(X^{\mathrm{an}};\mathbb{Q})=\mathrm{H}^i(X^{\mathrm{an}};\mathbb{Q})\)
has Hodge colevel \(\geq m\) is equivalent to the condition that
\(\mathrm{H}^{i-a}(X;\Omega^{a}_{X}) = 0\) for \(m>a\).  Here,
\(\Omega^{a}_{X} = 0\) if $a < 0$.
\end{example}

\subsection{Geometric colevel and cohomological colevel}

The cohomological colevel conditions discussed above are connected to
the well-known conjectures about the existence of algebraic cycles in
the following manner.  Let \(X\) be a smooth projective variety over
\(k\).  If there exists a subvariety \(Z\) of codimension \(\geq m\)
such that the map \(\mathrm{H}_Z^i(X) \to \mathrm{H}^i(X)\) is
surjective---where the left-hand side denotes the cohomology of \(X\)
supported on \(Z\)---then we say that the \emph{geometric colevel} of
\(\mathrm{H}^i(X)\) is \(\geq m\).  For the general definition of the
colevel filtration (\emph{la filtration par le coniveau}), the reader
is referred to \cite[\S10.1]{grothendieck:brauer-3} or
\cite[Exposé~XX, \S2]{sga7-2}.  It can be shown that if the geometric
colevel of \(\mathrm{H}^i(X)\) is \(\geq m\), then the cohomological
(i.e., Frobenius or Hodge) colevel of \(\mathrm{H}^i(X)\) is also
\(\geq m\). See \cite[p.~167]{grothendieck:brauer-3}.

\begin{example}%
Let \(X\) be a proper nonsingular variety over \(k\).  Let \(k(X)\)
denote the rational function field of \(X\), and let
\(\overline{k(X)}\) be an algebraic closure of \(k(X)\).  Suppose that
\[
\mathrm{CH}_0\left(X \times_k \operatorname{Spec}\overline{k(X)}\right) \otimes_{\mathbb{Z}} \mathbb{Q} \simeq \mathbb{Q}
\]
(for example, proper nonsingular rationally connected varieties,
including all Fano manifolds, satisfy this condition).  Then
\(\mathrm{H}^i(X)\) has geometric colevel \(\geq 1\) for any
\(i > 0\).  Consequently, depending on the context, the Frobenius or
Hodge colevel of \(\mathrm{H}^i(X)\) is also at least \(1\). For
further information, see \cite{esnault:0-cycle}.
\end{example}

Conversely, if the cohomological colevel of \(\mathrm{H}^i(X)\) is
\(\geq m\), the generalized Hodge conjecture in the Betti case, and
the generalized Tate conjecture in the \(\ell\)-adic case, state that
the geometric colevel is also at least \(m\), see
\cite[\S10]{grothendieck:brauer-3} and \cite{grothendieck:trivial}.

\begin{remark}
There is a more sophisticated version of the generalized Hodge
conjecture, also proposed by Grothendieck.  This version is applicable
to both singular and non-proper varieties and is formulated in terms
of motives.  It states that if \(M\) is an ``effective mixed motive''
whose de Rham realization \(M_{\mathrm{DR}}\) has colevel at least
\(\mu\) (with respect to the Hodge filtration), then \(M\) can be
expressed as \(N \otimes \mathbb{Q}(-\mu)\), where \(N\) is some
effective mixed motive and \(\mathbb{Q}(-\mu)\) denotes the Tate
twist.
\end{remark}

It is well known that the existence of a subvariety \(Z\) that
supports the cohomology (or the existence of \(N\) in the more
sophisticated conjecture) has significant arithmetic consequences.
For instance, when \(k = \mathbb{C}\) and \(X\) is smooth and
projective, the generalized Hodge conjecture suggests that a Hodge
colevel of \(\mathrm{H}^i(X)\) being \(\geq m\) implies that a
sufficiently general mod \(p\) reduction of \(X\) should also have
Frobenius colevel \(\geq m\). Let us elaborate on this for readers who
may not be familiar with this circle of ideas.

If \(\mathrm{H}^{i}(X)\) has Hodge colevel \(\geq m\), the generalized
Hodge conjecture predicts the existence of a subvariety \(Z\) of \(X\)
with codimension \(\geq m\) such that the following map is surjective:
\[
\mathrm{H}^{i}_{Z^{\mathrm{an}}}(X^{\mathrm{an}};\mathbb{Q}) \to \mathrm{H}^{i}(X^{\mathrm{an}};\mathbb{Q}).
\]
By the comparison theorem between étale and singular cohomology, the map of \(\ell\)-adic cohomology groups
\begin{equation}
\label{eq:geometric-colevel}
\mathrm{H}^{i}_Z(X;\mathbb{Q}_{\ell}) \to \mathrm{H}^{i}(X;\mathbb{Q}_{\ell})
\end{equation}
is also surjective.  Now, let us consider a spread-out of \(X\), i.e.,
a smooth projective morphism \(\mathcal{X} \to \operatorname{Spec}R\)---where
\(R\) is a finitely generated subring of \(\mathbb{C}\) which is
smooth over \(\mathbb{Z}\)---such that
\(\mathcal{X} \otimes_R \mathbb{C} = X\).  Note that the residue
fields of the maximal ideals of \(R\) are finite fields.

Upon replacing \(\operatorname{Spec}R\) by a smaller nonempty open
affine, we may assume the Zariski closure \(\mathcal{Z}\) of \(Z\) is
flat over \(R\).  For any closed point \(b\) of
\(\operatorname{Spec}R\), we form the following cartesian diagram
\begin{equation*}
\begin{tikzcd}
Z \ar[hook,d] \ar[r] & \mathcal{Z} \ar[d,hook] & Z_{b} \ar[l] \ar[hook,d] \\
X \ar[d]\ar[r] & \mathcal{X}\ar[d] & X_b \ar[l] \ar[d]\\
\operatorname{Spec}\mathbb{C} \ar[r] & \operatorname{Spec}R & \{b\}\ar[l]
\end{tikzcd}.
\end{equation*}
We shall refer to \(X_b\) as a \emph{specialization} of \(X\).  Let
\(\overline{b}\) denote a geometric point of
\(\operatorname{Spec}(R)\) valued in an algebraic closure of the
residue field \(\kappa(b)\) of the closed point \(b\).

By generic base change~\cite[Finitude]{deligne:sga4.5}, there exists a
Zariski open dense subscheme of \(\operatorname{Spec} R\) over which
the formation of
\(\mathrm{H}^{i}_{Z_{\overline{b}}}(X_{\overline{b}},
\mathbb{Q}_{\ell})\) commutes with arbitrary base change.  Therefore,
for \(b\) lying in this open set, we have isomorphisms of vector
spaces:
\[
\mathrm{H}^{i}_{Z}(X; \mathbb{Q}_{\ell}) \simeq \mathrm{H}^{i}_{Z_{\overline{b}}}(X_{\overline{b}}; \mathbb{Q}_{\ell}),
\]
and
\[
\mathrm{H}^{i}(X; \mathbb{Q}_{\ell}) \simeq \mathrm{H}^{i}(X_{\overline{b}}; \mathbb{Q}_{\ell}).
\]
Therefore, the surjectivity
of \eqref{eq:geometric-colevel} implies that, for a ``sufficiently general'' closed point \(b\)
of \(\operatorname{Spec}R\), the map
\begin{equation*}
\mathrm{H}^{i}_{Z_{\overline{b}}}(X_{\overline{b}};\mathbb{Q}_{\ell}) \to \mathrm{H}^{i}(X_{\overline{b}};\mathbb{Q}_{\ell})
\end{equation*}
is surjective as well. Thus,
\(\mathrm{H}^{i}(X_{\overline{b}};\mathbb{Q}_{\ell})\) has geometric colevel
\(\geq m\); and it follows that \(\mathrm{H}^{i}(X_{\overline{b}};\mathbb{Q}_{\ell})\) has Frobenius colevel \(\geq m\), i.e.,
the Frobenius eigenvalues of \(X_{b}\) are divisible
by \(q^{m}\), where \(q\) is the cardinality of \(\kappa(b)\).

In particular, if \(\mathrm{H}^i(X;\mathcal{O}_X) = 0\) for \emph{all}
\(i > 0\), and if we assume the generalized Hodge conjecture is valid,
then a sufficiently general specialization \(X_{b}\) should have a
rational point according to Grothendieck's trace formula.
Furthermore, the number of \(\kappa(b)\)-points is congruent to \(1\)
modulo \(q\), where \(q = |\kappa(b)|\).

This prediction for smooth projective \(X\) over \(\mathbb{C}\), based
on the generalized Hodge conjecture, aligns with a more refined
conjecture---``Newton above Hodge''---formulated by
Katz~\cite[Conjecture 2.9]{katz:on-theorem-of-ax}.  This latter
conjecture has been unconditionally proven by Mazur
\cite{mazur:frobenius-and-hodge-filtration}.

\begin{remark}
One cannot expect the Hodge colevel of \(\mathrm{H}^{i}(X)\) to be
\emph{equal} to the Frobenius colevel of \(\mathrm{H}^{i}(X_b)\) for a
sufficiently general smooth specialization \(X_{b}\).  A well-known
example illustrates this: According to a theorem by
Elkies~\cite{elkies}, any elliptic curve over \(\mathbb{Q}\) has
infinitely many supersingular reductions.  Therefore, for any elliptic
curve \(E\) over \(\mathbb{Z}[1/N]\), there are infinitely many primes
\(p\) such that
\(\mathrm{H}^2(E_{\overline{\mathbb{F}}_p} \times E_{\overline{\mathbb{F}}_p})\) has
Frobenius colevel \(1\), while the Hodge colevel of
\(\mathrm{H}^2(E_{\mathbb{C}} \times E_{\mathbb{C}})\) is zero.

On the other hand, one does expect that the Hodge colevel of
\(\mathrm{H}^{i}(X)\) should be equal to the Frobenius colevel of
\(\mathrm{H}^{i}(X_b)\) for \emph{infinitely} many specializations
\(X_{b}\) (more generally, it is expected that \(X\) should have
infinitely many ``ordinary'' reductions).  The case of colevel zero
corresponds to the weak ordinarity conjecture proposed by Mustaţă and
Srinivas~\cite{mustata-srinivas}.  Proving this in general appears to
be quite challenging.  According to the generalized Tate conjecture,
this expectation is related to the potential equality between the
geometric colevel of \(\mathrm{H}^{i}(X)\) and the geometric colevel
of \(\mathrm{H}^{i}(X_b)\) for infinitely many specializations
\(X_{b}\).
\end{remark}

When \(X_{b}\) is a \emph{singular} specialization, the colevel bounds on \(X\)
still have important arithmetic implications. We refer the reader to
Esnault~\cite{esnault:deligne-integrality-rational-points}
and Berthelot--Esnault--Rülling~\cite{berthelot-esnault-rulling:coniveau-one-mixed-char}
for investigations around this theme.

\subsection{Common lower bounds for cohomological colevels}

More generally, for \emph{any} (without smooth proper condition) flat
morphism \(\mathcal{X} \to \operatorname{Spec}R\), with
\(R \subset \mathbb{C}\) finitely generated over \(\mathbb{Z}\), and
for \(b\) being ``sufficiently general'', the Hodge colevels of \(X\)
are not larger than the Frobenius colevels of \(X_{b}\)\footnote{ This
  can be seen, for instance, by spreading out a proper hypercovering
  of the generic fiber of \(\mathcal{X}\to \operatorname{Spec}R\),
  whose entries are normal crossing complements of smooth projective
  varieties.

  However, one cannot expect this to hold for every \(b\), even when
  the generic fiber of \(\mathcal{X} \to \operatorname{Spec}R\) is
  smooth and projective. For a counterexample, see
  \cite[Theorem~1.2]{esnault-xu}.  }.

This conceptual explanation, appealing as it is, is not very
practical.  For instance, it cannot even be used to deduce the
classical theorem of Ax and
Katz~\cite{ax:q-divisible,katz:on-theorem-of-ax} (on Frobenius
colevels of primitive cohomology of a \emph{possibly singular}
projective variety) from a Hodge-theoretic result
(cf.~\cite{esnault-nori-srinivas}): we could indeed lift the defining
equations, but we have no idea whether the prime we start with is
``sufficiently general''.

Therefore, it is desirable to establish \emph{common} lower bounds for
the Hodge colevel of \(\mathrm{H}^i_{c}(X)\) and the
Frobenius colevel of \(\mathrm{H}^{i}_{c}(X_{b})\),
regardless of whether \(b\) is general or not, and irrespective of how
singular the specialization \(X_b\) may be.  Ideally, this should be
done in a purely formal manner, so that the argument is applicable in
both the Frobenius and Hodge contexts.  Such lower bounds have been
extensively studied in the literature, with a notable example being
the following integrality theorem of Deligne.

\begin{theorem}[Deligne]%
  \label{theorem:deligne}
  Let \(k\) be either a finite field, or \(\mathbb{C}\).
  Let \(X\) be a finite type, separated scheme over \(k\) of dimension
  \(n\). Then for any integer \(j\in \mathbb{Z}\), \(\mathrm{H}^{n+j}_{c}(X)\)
  has colevel \(\geq \max\{0,j\}\).%
  \footnote{The finite field case was proved by Deligne in
    \cite[Exposé~XXI, Corollarie 5.5.3]{sga7-2}.
    The theorem for Hodge colevels (which is not needed in this paper) can be shown as follows.
    When \(X\) is nonsingular, the theorem follows from Poincaré duality and
    Deligne's \emph{definition} of Hodge filtration in terms of logarithmic forms;
    if \(X\) is possibly singular, one proceeds by induction on dimension,
    and use the long exact sequence
    \[
      \cdots \to \mathrm{H}^{n+j}_{c}(X\setminus X_{\mathrm{sing}}) \to \mathrm{H}^{n+j}_{c}(X) \to
      \mathrm{H}^{n+j}_{c}(X_{\mathrm{sing}}) \to \cdots,
    \]
    where \(X_{\mathrm{sing}}\) is the singular locus of the reduced structure of
    \(X\), which has lower dimension. Applying the theorem for nonsingular variety
    to \(X\setminus X_{\mathrm{sing}}\), and inductive hypothesis to
    \(X_{\mathrm{sing}}\) finishes the proof.
  }
\end{theorem}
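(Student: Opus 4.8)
The plan is to deduce Theorem~\ref{theorem:deligne} from a statement with twisted coefficients, proved by induction on $n=\dim X$; the twist is exactly what lets the induction survive a fibration. I argue in the $\ell$-adic setting (the Hodge analogue is different in spirit and, per the footnote, not needed here), and throughout I freely pass to finite type models over finite fields and enlarge the base field, taking all cohomology to be geometric — over $\overline{\mathbb{F}}_q$, with its $\mathrm{Frob}_q$-action — with ``colevel'' as in the footnote. Call a constructible $\overline{\mathbb{Q}}_\ell$-sheaf $\mathcal{F}$ on a finite type $\mathbb{F}_q$-scheme \emph{fibrewise of colevel $\geq c$}, for $c\in\mathbb{Z}_{\geq 0}$, if for every closed point $x$ every eigenvalue of the geometric Frobenius $\mathrm{Frob}_x$ on the stalk $\mathcal{F}_{\bar x}$ lies in $|\kappa(x)|^{c}\,\overline{\mathbb{Z}}$. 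The statement to prove, call it $(\ast_n)$, is: \emph{if $X/\mathbb{F}_q$ is separated of dimension $n$ and $\mathcal{F}$ is fibrewise of colevel $\geq c$, then $\mathrm{H}^i_c(X;\mathcal{F})$ has colevel $\geq c+\max\{0,i-n\}$ for all $i$.} Theorem~\ref{theorem:deligne} is the case $\mathcal{F}=\mathbb{Q}_\ell$, $c=0$; note that $(\ast_n)$ for $i\leq n$ is Deligne's bare integrality assertion, which is therefore reproved along the way.

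\emph{Base case and dévissage.} For $n=0$: $X$ is finite, $\mathrm{H}^0_c(X;\mathcal{F})$ is assembled from the stalks by Galois descent, and an eigenvalue $\mu$ of $\mathrm{Frob}_q$ on it satisfies $\mu^{\deg x}\in|\kappa(x)|^{c}\overline{\mathbb{Z}}=(q^{c})^{\deg x}\overline{\mathbb{Z}}$ for a suitable closed point $x$, so $\mu\in q^{c}\overline{\mathbb{Z}}$ because $\overline{\mathbb{Z}}$ is integrally closed. For $n\geq 1$: pick a dense open $U\subseteq X_{\mathrm{red}}$ smooth over $\mathbb{F}_q$ with $\mathcal{F}|_U$ lisse, and let $Z$ be its closed complement, so $\dim Z\leq n-1$. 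The excision triangle $j_!j^{*}\mathcal{F}\to\mathcal{F}\to i_*i^{*}\mathcal{F}$ gives a $\mathrm{Frob}_q$-equivariant long exact sequence tying $\mathrm{H}^i_c(U;\mathcal{F}|_U)$, $\mathrm{H}^i_c(X;\mathcal{F})$ and $\mathrm{H}^i_c(Z;\mathcal{F}|_Z)$; since $\mathcal{F}|_Z$ is again fibrewise of colevel $\geq c$ and $\max\{0,i-(n-1)\}\geq\max\{0,i-n\}$, and since the condition ``colevel $\geq\mu$'' passes to sub- and quotient-modules, the inductive hypothesis applied to $Z$ reduces $(\ast_n)$ to the case $X$ smooth with $\mathcal{F}$ lisse; splitting off connected components (cohomology is additive over them, and a smooth connected scheme is irreducible), we may assume $X$ smooth irreducible of dimension $n$.

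\emph{The curve case.} The crux is $(\ast_1)$: $X$ a smooth irreducible curve, affine or proper, with $\mathcal{F}$ lisse and fibrewise of colevel $\geq c$. Both $\mathrm{H}^0_c(X;\mathcal{F})$ (which vanishes unless $X$ is proper) and the ordinary $\mathrm{H}^0(X;\mathcal{F}^{\vee})$ embed $\mathrm{Frob}_q$-equivariantly, after a finite base change giving $X$ a rational point, into a stalk; hence $\mathrm{H}^0_c(X;\mathcal{F})$ has colevel $\geq c$, and by Poincaré duality $\mathrm{H}^2_c(X;\mathcal{F})\cong\mathrm{H}^0(X;\mathcal{F}^{\vee})^{\vee}(-1)$, every eigenvalue of $\mathrm{H}^2_c(X;\mathcal{F})$ is $q\gamma$ for some eigenvalue $\gamma$ of $\mathrm{Frob}_x$ on $\mathcal{F}_{\bar x}$, so $\mathrm{H}^2_c(X;\mathcal{F})$ has colevel $\geq c+1$. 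For $\mathrm{H}^1_c$, compare the two Grothendieck expressions for the $L$-function:
\[
\prod_{i}\det\bigl(1-T\,\mathrm{Frob}_q\mid\mathrm{H}^i_c(X;\mathcal{F})\bigr)^{(-1)^{i+1}}\;=\;L(\mathcal{F},T)\;=\;\prod_{x}\det\bigl(1-T^{\deg x}\,\mathrm{Frob}_x\mid\mathcal{F}_{\bar x}\bigr)^{-1},
\]
the last product over the closed points $x$ of $X$. On the right, the $T^{N}$-coefficient of each Euler factor lies in $q^{cN}\overline{\mathbb{Z}}$ (a product of $N/\deg x$ eigenvalues, each in $q^{c\deg x}\overline{\mathbb{Z}}$), a property stable under inversion and under the formal product, so $L(\mathcal{F},q^{-c}S)\in 1+S\,\overline{\mathbb{Z}}[[S]]$. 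On the left, the factors for $\mathrm{H}^0_c$ and $\mathrm{H}^2_c$, after $T\mapsto q^{-c}S$, lie in $1+S\,\overline{\mathbb{Z}}[S]$ by the colevel bounds just proved. Hence $\det(1-q^{-c}S\,\mathrm{Frob}_q\mid\mathrm{H}^1_c(X;\mathcal{F}))$, being both a product of elements of $1+S\,\overline{\mathbb{Z}}[[S]]$ and a polynomial, has coefficients in $\overline{\mathbb{Z}}$; these are, up to sign, the elementary symmetric functions of the numbers $\lambda/q^{c}$ with $\lambda$ a Frobenius eigenvalue on $\mathrm{H}^1_c(X;\mathcal{F})$, so each $\lambda/q^{c}$ is a root of a monic $\overline{\mathbb{Z}}$-polynomial, hence an algebraic integer; thus $\mathrm{H}^1_c(X;\mathcal{F})$ has colevel $\geq c$. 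This proves $(\ast_1)$ for $\mathcal{F}$ lisse, and a general constructible $\mathcal{F}$ on a curve is reduced to this case by the dévissage above.

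\emph{The fibration, and the obstacle.} For $n\geq 2$, a nonconstant element of $\mathbb{F}_q(X)$ gives a dominant morphism $f\colon U\to B$ with $U\subseteq X$ a dense open and $B$ a curve; the complement $X\setminus U$, of dimension $\leq n-1$, is absorbed by the dévissage. All fibres of $f$ have dimension $\leq n-1$ (as $U$ is irreducible of dimension $n$ and $f$ is nonconstant), so by proper base change and $(\ast_{n-1})$ the sheaf $R^{q}f_!\,\mathcal{F}|_U$ on $B$ is fibrewise of colevel $\geq c+\max\{0,q-n+1\}$; applying $(\ast_1)$ to it, the Leray term $E_2^{p,q}=\mathrm{H}^p_c(B;R^{q}f_!\,\mathcal{F}|_U)$ has colevel $\geq c+\max\{0,q-n+1\}+\max\{0,p-1\}$, and a short case check over $0\leq p\leq 2$ shows this is $\geq c+\max\{0,p+q-n\}$. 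Thus every subquotient of $E_2$ contributing to $\mathrm{H}^i_c(U;\mathcal{F}|_U)$ has colevel $\geq c+\max\{0,i-n\}$, and the dévissage transfers this to $\mathrm{H}^i_c(X;\mathcal{F})$, completing the induction. I expect the real work to be concentrated in the curve case — turning the formal $L$-function manipulation into a rigorous separation of the $\mathrm{H}^1_c$-factor, which is why one must first dispose of $\mathrm{H}^0_c$ and $\mathrm{H}^2_c$ by Poincaré duality and the ``global sections embed in a stalk'' principle — together with the running bookkeeping that ``fibrewise colevel'' behaves well under restriction, proper base change, $R^{q}f_!$ and Leray, and the verification that the two nested inductions (on $n$, and the dévissage on $\dim Z$) terminate. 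The Hodge case is handled by the separate, more direct argument indicated in the footnote and is not used below.
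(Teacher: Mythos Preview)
Your argument is correct and is essentially Deligne's own proof from \cite[Exposé~XXI]{sga7-2}, which is exactly what the paper cites; the paper gives no independent proof in the finite-field case, only that citation together with the footnote sketch for the Hodge case. The strengthening to coefficient sheaves $(\ast_n)$, the dévissage to a smooth base with lisse coefficients, the $L$-function/Euler-product integrality on curves, and the induction via a curve fibration and the Leray spectral sequence are precisely the classical steps.
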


The bounds provided by Deligne's theorem are quite crude, as they
apply to \emph{any} variety over \(k\).  To improve upon this, one
could consider the type of polynomial equations that define \(X\) and
investigate whether the colevels can be bounded from below by
quantities that depend solely on these equations.  One set of such
quantities consists of the following numbers.  These numbers appear,
for instance, in the classical Ax–Katz theorem
\cite{ax:q-divisible,katz:on-theorem-of-ax} and in the first author's
work~\cite{wan:poles} on the ``polar part'' of zeta functions of
algebraic varieties.

\begin{definition}\label{definition:mu}
For a sequence of integers \(d_1\geq \cdots \geq d_r \geq 1\) and
integers $N,j\geq 0$, define
\begin{equation*}
\mu_j(N;d_1,\ldots,d_r) = j + \max\left\{ 0, \left\lceil \frac{N -j - \sum_{i=1}^{r} d_i}{d_1}\right \rceil \right\}.
\end{equation*}
Note that we have
\(\mu_j(N;d_1,\ldots,d_r) = j + \mu_0(N-j;d_1,\ldots,d_r) \geq j\),
and the sequence \(\mu_j(N;d_1,\ldots,d_r)\) is increasing in $j$:
\[\mu_0(N;d_1,\ldots,d_r) \leq \mu_1(N;d_1,\ldots,d_r) \leq
\mu_2(N;d_1,\ldots,d_r) \leq \cdots.\]
\end{definition}

The following theorem summarizes the best known lower bounds in this
regard, which are the result of the contribution from several
mathematicians, including Ax, Katz, Deligne, Dimca, Esnault, Nori,
Srinivas, del~Angel, the first author, among others.

(For a closed subvariety \(Y\) of \(\mathbb{P}^N\), set
\(\mathrm{H}^{\ast}(Y)_{\mathrm{prim}}=\operatorname{Coker}(\mathrm{H}^{\ast}(\mathbb{P}^N) \to \mathrm{H}^{\ast}(Y))\).)

\begin{theorem}%
\label{theorem:previous}
Let \(k\) be either an algebraic closure of a finite field, or the field \(\mathbb{C}\) of complex numbers.
\begin{enumerate}
\item\emph{(Affine Case).} Let \(X\) be the affine variety in \(\mathbb{A}^N\) of dimension \(n\)
defined by \(f_1 = \cdots = f_r = 0\), where \(f_i \in k[x_1,\ldots,x_n]\),
has degree \(\leq d_i\).
\begin{enumerate}
\item  The colevels of \(\mathrm{H}^{\ast}_{c}(X)\) and
\(\mathrm{H}^{\ast}_{c}(\mathbb{A}^{N}\setminus X)\) are
at least \(\mu_0(N;d_1,\ldots,d_r)\), and
\label{item-1a}
\item for any \(0 \leq j\leq N-1\),
the colevels of
\[
\mathrm{H}^{N-1+j}_{c}(X) \quad \text{and}\quad \mathrm{H}^{N+j}_{c}(\mathbb{A}^N \setminus X)
\]
are at least \(\mu_j(N;d_1,\ldots,d_r)\).
\label{item-1b}
\end{enumerate}
\item\emph{(Projective Case).}
Let \(Y\) be the projective variety in \(\mathbb{P}^N\) of dimension \(n\)
defined by \(g_1 = \cdots = g_r = 0\), where
\(g_i \in k[x_0,\ldots,x_N]\) is homogeneous of degree \(\leq d_i\).
\begin{enumerate}
\item The colevels of \(\mathrm{H}^{\ast}(Y)_{\mathrm{prim}}\) and
\(\mathrm{H}_c^{\ast}(\mathbb{P}^{N}\setminus Y)\) are
at least \(\mu_{0}(N+1;d_1,\ldots,d_r)\), and
\label{item-2a}
\item  for \(0\leq j \leq n\), the colevels of \(\mathrm{H}^{n+j}(Y)_{\mathrm{prim}}\) and
\(\mathrm{H}^{n+1+j}_{c}(\mathbb{P}^N\setminus{Y})\)
are at least \(\mu_j(N+1;d_1,\ldots,d_r)\).
\label{item-2b}
\end{enumerate}
\end{enumerate}
\end{theorem}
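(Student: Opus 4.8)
The plan is to read Theorem~\ref{theorem:previous} as an amalgam of results of Ax, Katz, Deligne, Dimca, Esnault--Nori--Srinivas, del~Angel and the first author, and to present it by isolating the one substantive input---the affine-complement estimate---and then obtaining every other case from it by standard excision. Concretely I would run four reductions. \emph{(i)} For a closed $X\subset\mathbb{A}^N$ of dimension $n$ with open complement $U$, the long exact sequence $\cdots\to\mathrm{H}^i_c(U)\to\mathrm{H}^i_c(\mathbb{A}^N)\to\mathrm{H}^i_c(X)\to\mathrm{H}^{i+1}_c(U)\to\cdots$, together with the concentration of $\mathrm{H}^\ast_c(\mathbb{A}^N)$ in degree $2N$ (there equal to $\mathbb{Q}(-N)$, whose colevel $N$ exceeds every $\mu_j$ in the relevant range), makes the colevel bounds for $\mathrm{H}^{N-1+j}_c(X)$ and $\mathrm{H}^{N+j}_c(U)$ equivalent; this is how (1a) fits with (1b). \emph{(ii)} For a closed $Y\subset\mathbb{P}^N$, the Gysin sequence relating $\mathrm{H}^\ast(\mathbb{P}^N)$, $\mathrm{H}^\ast(Y)$ and $\mathrm{H}^\ast_c(\mathbb{P}^N\setminus Y)$ realizes $\mathrm{H}^i(Y)_{\mathrm{prim}}$ as a subobject of $\mathrm{H}^{i+1}_c(\mathbb{P}^N\setminus Y)$. \emph{(iii)} The complement $\mathbb{P}^N\setminus Y$ receives a Zariski-locally trivial $\mathbb{G}_m$-torsor from $\mathbb{A}^{N+1}\setminus C(Y)$, where $C(Y)$ is the affine cone---cut out by the \emph{same} $r$ forms, now in $N+1$ variables, and of dimension $n+1$; feeding $\mathrm{H}^\ast_c(\mathbb{G}_m)=\mathbb{Q}\oplus\mathbb{Q}(-1)$ into the Leray (Wang) spectral sequence, and using that colevel is non-decreasing on subquotients and shifts by one under a Tate twist, transfers the affine estimate for $\mathbb{A}^{N+1}\setminus C(Y)$ to $\mathbb{P}^N\setminus Y$. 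This is exactly why the projective bounds are governed by $\mu_j(N+1;\ldots)$ rather than by $\mu_j(N;\ldots)$; the attendant bookkeeping of degrees and twists costs a short downward induction on cohomological degree.

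It then remains to bound the affine complement: for $X=V(f_1,\ldots,f_r)\subset\mathbb{A}^N$ with $\deg f_i\leq d_i$, the space $\mathrm{H}^{N-1+j}_c(X)$---equivalently $\mathrm{H}^{N+j}_c(\mathbb{A}^N\setminus X)$---has colevel $\geq\mu_j(N;d_1,\ldots,d_r)$, and $\mathrm{H}^\ast_c(X)$ and $\mathrm{H}^\ast_c(\mathbb{A}^N\setminus X)$ have colevel $\geq\mu_0$. The first move is the standard reduction to a single equation: with auxiliary variables $t_1,\ldots,t_r$, form $h(x,t)=\sum_i t_i f_i(x)$ on $\mathbb{A}^{N+r}$; the point count of $X$ is governed by the exponential sum of $h$, and $\{h=1\}$ fibers over $\mathbb{A}^N\setminus X$ in affine spaces $\mathbb{A}^{r-1}$, contributing only a Tate twist. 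The essential subtlety is that $h$ must be controlled not by its degree $d_1+1$ but by its Newton polytope, whose facets remember the individual $d_i$ and the sum $\sum d_i$---which is precisely the source of the combinatorial quantity $\mu_j$ of Definition~\ref{definition:mu}.

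I would then prove the single-hypersurface estimate twice, in parallel. In the Frobenius setting this is the cohomological Ax--Katz theorem: exponential sums are written through Gauss sums, whose valuations are given by Stickelberger's theorem, and the Newton-polygon bound of Adolphson--Sperber yields divisibility of the reciprocal roots of the $L$-function; promoting divisibility of point counts to divisibility of the individual Frobenius eigenvalues on each $\mathrm{H}^i_c$ requires the analysis of the polar part of the zeta function together with Deligne's purity, as carried out in \cite{katz:on-theorem-of-ax,ax:q-divisible} and, in the sharper form needed for (1b)--(2b), in \cite{wan:poles}. In the Hodge setting one uses instead the mixed Hodge structure on $\mathrm{H}^\ast(\mathbb{A}^N\setminus V(f))$ through the pole-order (Griffiths--Deligne--Dimca) filtration: the Hodge colevel is governed by the vanishing of low graded pieces of the Jacobian ring of $f$, again reproducing $\mu$; for several equations and an arbitrary---possibly non-complete-intersection---$X$, this is the theorem of Esnault--Nori--Srinivas \cite{esnault-nori-srinivas}. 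In both contexts the refined statement $\mu_j=j+\mu_0(N-j;\ldots)$ in degree $N-1+j$ interpolates between Deligne's integrality theorem~\ref{theorem:deligne} (the leading term $j$) and the $\mu_0$-estimate just discussed, and is obtained by a relative version of the same argument (del~Angel, and \cite{wan:poles}).

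The principal obstacle, in both incarnations, is the absence of a complete-intersection hypothesis on $X$: then the Koszul complex on $f_1,\ldots,f_r$ is not a resolution and the Jacobian-ring count is not directly available, so one deforms $(f_1,\ldots,f_r)$ within the degree bounds to a generic complete intersection and invokes semicontinuity of colevel under specialization---via limit mixed Hodge structures on the Hodge side, via the behavior of Newton polygons in families on the Frobenius side. The second, more structural difficulty---and the motivation for the uniform framework developed later in this paper---is that the two proofs above run along parallel but logically independent tracks; recasting them as a single formal argument, valid verbatim for Hodge and for Frobenius colevels, is exactly what Theorem~\ref{theorem:beyond} and its companions are designed to do.
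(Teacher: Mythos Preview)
The paper does not prove Theorem~\ref{theorem:previous}; it is stated as a summary of results already in the literature (Ax, Katz, Deligne--Dimca, Esnault--Nori--Srinivas, del~Angel, Esnault--Katz, Esnault--Wan), with attributions spelled out in the ``brief history'' paragraph following the statement. Indeed the authors say explicitly that they \emph{do not} reprove (\ref{item-1a}) and (\ref{item-2a}), and use them as black boxes. So your task was really to reconstruct those literature proofs, not to match the paper.

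Your reductions (i)--(iii) are correct and standard, and the passage from the affine to the projective case via the cone is exactly how it is done. Two points in your ``substantive input'' are problematic, however.

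First, the step from divisibility of point counts (Ax--Katz) to colevel of each individual $\mathrm{H}^i_c$ is \emph{not} achieved by ``Deligne's purity'' as you write: purity says nothing about $p$-adic valuations, and in any case $X$ is allowed to be arbitrarily singular. The paper stresses precisely this: ``the Ax--Katz theorem does not directly imply (\ref{item-1a}), as there may be cancellation of Frobenius eigenvalues in the zeta function''; the passage to individual cohomology groups is the content of Esnault--Katz, via del~Angel's d\'evissage (Mayer--Vietoris for the pieces $\{f_{i_1}\cdots f_{i_s}=0\}$, reducing to hypersurfaces). Your citation of \cite{wan:poles} here is off: that paper works at the level of the zeta function, not of individual $\mathrm{H}^i_c$.

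Second, your proposed treatment of the non--complete-intersection case by ``deforming $(f_1,\ldots,f_r)$ to a generic complete intersection and invoking semicontinuity of colevel'' has a real gap. If $X$ is not a set-theoretic complete intersection then $\dim X=n>N-r$, whereas a generic deformation $X_t$ has $\dim X_t=N-r$; the family is not equidimensional, the ranks of the $\mathrm{H}^i_c$ jump, and there is no semicontinuity statement that transports a colevel lower bound from $X_t$ to $X_0$ in this situation (neither limit mixed Hodge structures nor Grothendieck's Newton-polygon specialization apply in the form you need). The actual arguments (del~Angel, Esnault--Katz on the Frobenius side; Esnault--Nori--Srinivas on the Hodge side) avoid deformation entirely and proceed by an inclusion--exclusion/d\'evissage that stays within the class of varieties cut out by monomials in the given $f_i$ --- this is why the paper later records Theorem~\ref{ek} in that particular form.
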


\subsection{A brief history of Theorem~\ref{theorem:previous}}
Item (\ref{item-1a}) for Frobenius colevel was inspired by the
classical theorem of Ax and
Katz~\cite{ax:q-divisible,katz:on-theorem-of-ax}, which asserts that
the reciprocal zeros and poles of the zeta function \(Z_{X}(t)\)
of \(X\) are divisible by \(\mu_0(N; d_1, \ldots, d_{r})\) in the ring
of algebraic integers.  But the Ax–Katz theorem does not directly
imply (\ref{item-1a}), as there may be cancellation of Frobenius
eigenvalues in the zeta function.  The form as stated is due to
Esnault and Katz~\cite[Theorems~2.1--2.3 and
\S5.3]{esnault-katz:cohomological-divisibility}, based on a method
developed by
del~Angel~\cite{del-angel:hodge-type-projective-varieties-low-degree}. This
lower bound is essentially optimal in general, as the Ax–Katz bound is
sharp.

Item (\ref{item-2a}) for Hodge colevels was proved by Deligne
in~\cite[Exposé~XI]{sga7-2} when $Y$ is a smooth complete
intersection, and historically this result motivated Katz's
improvement of Ax's theorem.  For an arbitrary hypersurface $Y$,
(\ref{item-2a}) for Hodge colevel was proved by Deligne and
Dimca~\cite{deligne-dimca:hodge-filtration-and-pole-order-filtration}.
For an arbitrary complete intersection $Y$, (\ref{item-2a}) for Hodge
colevel was proved by Esnault~\cite{esnault:hodge-type-small-degrees}.
For an arbitrary projective $Y$, (\ref{item-2a}) for Hodge colevel was
proved by Esnault--Nori--Srinivas~\cite{esnault-nori-srinivas}.

For Frobenius colevels,
Esnault~\cite{esnault:frobenius-complete-intersections-low-degree}
proved (\ref{item-2a}) for complete intersections.  The general result
(\ref{item-2a}) is proved by Esnault and
Katz~\cite[Theorem~4.1(1)]{esnault-katz:cohomological-divisibility}.
This part of the estimate is essentially sharp in general, again by
Ax--Katz and by Deligne's result for smooth complete intersections
above.

As hinted by Deligne's Theorem \ref{theorem:deligne}, compactly
supported cohomology groups beyond the middle dimension tend to have
larger colevels.  Thus, Items (\ref{item-1b}) and (\ref{item-2b}) both
reflect this phenomenon.

In a different related direction, the first author~\cite{wan:poles}
made some early attempts in the framework of zeta functions via
Dwork's $p$-adic theory.  He showed that, in the affine case, the
reciprocal poles of $Z_{X}(t)^{(-1)^{\dim X+1}}$ are divisible by
$q^{\mu_{1}(N;d_1,\ldots,d_r)}$ as algebraic integers, when $X$ is a
complete intersection.  This improved the ``polar part'' of the
Ax--Katz theorem for complete intersections.

Motivated by this, (\ref{item-2b}) for Hodge colevel was shown by
Esnault and the first author
\cite{esnault-wan:hodge-type-exotic-cohomology} for projective
complete intersection $Y$.
Esnault--Katz~\cite{esnault-katz:cohomological-divisibility} proved,
for an arbitrary projective $Y$, that the colevel of
$\mathrm{H}^{N+j}_{c}(\mathbb{P}^{N}\setminus Y)$ is
$\geq \mu_{j}(N+1;d_1,\ldots,d_r)$.  This is more general but weaker
than (\ref{item-2b}), as the cohomological degree of the improved
bound starts from the bigger middle dimension $N$ of
$\mathbb{P}^{N}\setminus Y$, not the smaller number $n+1$. The sharper
form (\ref{item-2b}) in the general case was proved by Esnault and the
first author~\cite{esnault-wan:divisibility}.

\subsection{Better colevel bounds for affine varieties} Similarly,
Item (\ref{item-1b}) is not an optimal estimate either, unless $X$ is
a hypersurface, as the cohomological degree of the improved bound
starts from the bigger number $N-1$, not the smaller middle dimension
$n$ of $X$.

In \cite{esnault-wan:divisibility}, the following improvement of
(\ref{item-1b}) was proposed as an open question. It is the affine
analogue of the better bound (\ref{item-2b}) for projective varieties:

\begin{question}[\cite{esnault-wan:divisibility}]\label{question}
  Under the hypothesis
  of Theorem~\ref{theorem:previous}(1), is it true that
  for \(0 \leq j \leq n\),
  the colevels of \(\mathrm{H}^{n +j}_{c}(X)\) and
  \(\mathrm{H}_{c}^{n+j+1}(\mathbb{A}^N\setminus X)\) are at least
  \(\mu_j(N;d_1,\ldots,d_r)\)?
\end{question}

In \cite{esnault-wan:divisibility}, Esnault and the first author gave
an evidence that this question should have a positive answer: if the
defining equations of \(X\) have a certain good behavior at infinity,
then one can reduce the bounds proposed in the question to
Theorem~\ref{theorem:previous}(2b).  It is also noted that if a
suitable version of weak Lefschetz theorem holds, then the Question
has an affirmative answer.

In these notes, we prove the needed Lefschetz type theorem.  See
Lemma~\ref{lemma:gysin}.  As a consequence, we get the
following\footnote{After we finish the manuscript, we learned that Rai
  and Shuddhodan also obtained a proof of
  Theorem~\ref{theorem:positive}~\cite{rs23}.}.

\begin{theorem}[See Lemma~\ref{lemma:complete-intersection-bound}]%
  \label{theorem:positive}
  Question~\ref{question} has an affirmative answer.
  That is, for integer $0 \leq j\leq n$,
  the colevels of \(\mathrm{H}^{n +j}_{c}(X)\)
  and \(\mathrm{H}_{c}^{n+j+1}(\mathbb{A}^N\setminus X)\) are at least
  \(\mu_j(N;d_1,\ldots,d_r)\).
\end{theorem}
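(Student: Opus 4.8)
The plan is to reduce Question~\ref{question} to the projective bound in Theorem~\ref{theorem:previous}(\ref{item-2b}) by compactifying $X \subset \mathbb{A}^N$ inside $\mathbb{P}^N$ and relating the compactly supported cohomology of the affine variety to the primitive cohomology of its projective closure together with cohomology ``at infinity.'' Concretely, let $\overline{X} \subset \mathbb{P}^N$ be the closure of $X$, defined by the homogenizations of $f_1,\dots,f_r$ (of the same degrees $d_1 \geq \cdots \geq d_r$), and let $X_\infty = \overline{X} \cap H_\infty$ be the part at the hyperplane at infinity $H_\infty \cong \mathbb{P}^{N-1}$; note $X_\infty$ is cut out in $\mathbb{P}^{N-1}$ by the leading forms, which have degrees $\leq d_i$ as well. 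The two pieces are linked by the excision/Gysin long exact sequence
\[
\cdots \to \mathrm{H}^{i}_c(X) \to \mathrm{H}^{i}(\overline{X}) \to \mathrm{H}^{i}(X_\infty) \to \mathrm{H}^{i+1}_c(X) \to \cdots,
\]
and I would run this after passing to primitive parts, where the Lefschetz hyperplane relation $\mathrm{H}^{\ast}(\mathbb{P}^N) \to \mathrm{H}^{\ast}(\overline{X})$ versus $\mathrm{H}^{\ast}(\mathbb{P}^{N-1}) \to \mathrm{H}^{\ast}(X_\infty)$ gives a compatible splitting off of the ``trivial'' Tate classes. The upshot should be an exact sequence relating $\mathrm{H}^{n+j}_c(X)$ to $\mathrm{H}^{n+j}(\overline{X})_{\mathrm{prim}}$ (dimension of $\overline{X}$ is $n$, same as $X$) and $\mathrm{H}^{n-1+j}(X_\infty)_{\mathrm{prim}}$ (dimension of $X_\infty$ is $n-1$, so here the relevant index into $\mu$ shifts by one). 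Both of these have colevel $\geq \mu_j(N+1;d_1,\dots,d_r)$ and $\geq \mu_{j+1}(N; d_1,\dots,d_r)$ respectively by Theorem~\ref{theorem:previous}(\ref{item-2b}), and one checks from Definition~\ref{definition:mu} that $\mu_j(N+1;d_1,\dots,d_r) \geq \mu_j(N;d_1,\dots,d_r)$ and $\mu_{j+1}(N;d_1,\dots,d_r) \geq \mu_j(N;d_1,\dots,d_r)$, so colevel $\geq \mu_j(N;d_1,\dots,d_r)$ propagates through the exact sequence by the standard fact that colevel is preserved under passing to subquotients. This handles $\mathrm{H}^{n+j}_c(X)$; the statement for $\mathrm{H}^{n+j+1}_c(\mathbb{A}^N \setminus X)$ follows from the long exact sequence of the pair $(\mathbb{A}^N, X)$, i.e.\ $\cdots \to \mathrm{H}^{i}_c(X) \to \mathrm{H}^{i+1}_c(\mathbb{A}^N \setminus X) \to \mathrm{H}^{i+1}_c(\mathbb{A}^N) \to \cdots$, together with the fact that $\mathrm{H}^{\ast}_c(\mathbb{A}^N)$ is concentrated in degree $2N$ with colevel $N$, which is $\geq \mu_j$ in the relevant range.

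The subtlety, and the reason the naive compactification argument does not immediately close, is that $X_\infty$ can be badly behaved — in particular $\overline{X}$ may fail to be a ``good'' compactification and the degrees of the leading forms governing $X_\infty$ need not be the full $d_i$, so one must be careful that the bound applied to $X_\infty$ is still at least $\mu_j(N;d_1,\dots,d_r)$. This is exactly the ``good behavior at infinity'' gap flagged in the excerpt: when $X_\infty$ has the expected dimension and the leading forms have the expected degrees the reduction is clean, but in general it is not, and this is where the promised Lefschetz-type input (Lemma~\ref{lemma:gysin}) must do real work. The correct formulation, I expect, is not to compactify $X$ itself but to apply a weak Lefschetz theorem to realize $\mathrm{H}^{n+j}_c(X)$ as (a subquotient of) the cohomology of a generic hyperplane section or, dually, to cut $\mathbb{A}^N \setminus X$ down by generic linear sections until one lands in a situation where the projective bound of Theorem~\ref{theorem:previous}(\ref{item-2b}) applies directly. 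The main obstacle is therefore establishing the right Gysin/Lefschetz statement: one needs that a suitably generic linear section $\iota\colon L \hookrightarrow \mathbb{A}^N$ induces, on $\mathbb{A}^N \setminus X$ (equivalently on $X$), a Gysin map $\mathrm{H}^{i}_c\big((\mathbb{A}^N \setminus X) \cap L\big)(-c) \to \mathrm{H}^{i+2c}_c(\mathbb{A}^N \setminus X)$ that is surjective in the relevant range of degrees, \emph{and} that this surjectivity holds compatibly in the Hodge and $\ell$-adic settings with the correct Tate twist — so that an inductive descent on $N$ reduces the affine bound to the case of small $N$ (e.g.\ a hypersurface or a point), where Item~(\ref{item-1b}) or a direct computation suffices.

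Assuming Lemma~\ref{lemma:gysin} and Lemma~\ref{lemma:complete-intersection-bound}, the argument is then purely formal: one inducts on $N - n$ (the codimension) or on $N$ itself. In the base case $X$ is a hypersurface, where Item~(\ref{item-1b}) of Theorem~\ref{theorem:previous} already gives the full range $0 \leq j \leq n = N-1$ with the bound $\mu_j(N;d_1)$. For the inductive step, suppose $X \subsetneq \mathbb{A}^N$ has $r \geq 2$ equations; write $X = X' \cap \{f_r = 0\}$ where $X'$ is defined by $f_1,\dots,f_{r-1}$. One uses the Gysin/weak-Lefschetz input to control $\mathrm{H}^{\ast}_c(X)$ in terms of $\mathrm{H}^{\ast}_c(X')$ and $\mathrm{H}^{\ast}_c(X' \setminus X)$ — the latter being $X'$ minus a hypersurface-type divisor, which one handles by the hypersurface case relative to $X'$ — and concludes by the inequality $\mu_j(N;d_1,\dots,d_{r-1}) \geq \mu_j(N;d_1,\dots,d_r)$ (adding an equation only raises, never lowers, the Ax–Katz exponent, as is immediate from Definition~\ref{definition:mu}). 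Throughout, the key formal mechanism is that ``colevel $\geq m$'' is closed under extensions, subobjects, and quotients in the category of mixed Hodge structures (resp.\ Frobenius modules), so once each term in a long exact sequence has colevel $\geq \mu_j(N;d_1,\dots,d_r)$, so does every cohomology group built from them; this is what makes the argument uniform across the Betti and $\ell$-adic worlds, as desired.
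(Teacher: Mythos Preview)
Your proposal circles around the right idea but never lands on it, and the concrete reductions you attempt have genuine gaps. The paper's proof (Lemma~\ref{lemma:complete-intersection-bound}) is much shorter and more direct than anything you outline: one simply slices $X$ by $j$ sufficiently general hyperplanes $B_1,\ldots,B_j$ to obtain $X\cap B_1\cap\cdots\cap B_j\subset\mathbb{A}^{N-j}$ of dimension $n-j$, defined by equations of the same degrees; the Gysin lemma (Lemma~\ref{lemma:gysin}) then gives a surjection
\[
\mathrm{H}^{n-j}_{c}\bigl(X\cap B_1\cap\cdots\cap B_j;\Lambda(-j)\bigr)\twoheadrightarrow \mathrm{H}^{n+j}_{c}(X),
\]
and one concludes by applying the \emph{affine} Ax--Katz bound, Theorem~\ref{theorem:previous}(\ref{item-1a}), to the slice in its middle degree $n-j$, yielding colevel $\geq j+\mu_0(N-j;d_1,\ldots,d_r)=\mu_j(N;d_1,\ldots,d_r)$. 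There is no compactification, no induction on $r$, and no appeal to the projective bound~(\ref{item-2b}).

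Now the gaps in your approaches. First, your compactification argument is not broken for the reason you suspect: the leading forms still have degree $\leq d_i$, and if $\dim X_\infty<n-1$ the relevant cohomological index only moves further past the middle, which helps. The real obstruction is handling the non-primitive classes and making the passage to primitive parts exact; you wave at this but do not carry it out. Second, and more seriously, your final induction on $r$ does not work as written. You write $X=X'\cap\{f_r=0\}$ and propose to handle $\mathrm{H}^{\ast}_c(X'\setminus X)$ by ``the hypersurface case relative to $X'$,'' but Item~(\ref{item-1b}) is a statement about hypersurfaces in $\mathbb{A}^N$, not in an arbitrary affine variety $X'$; there is no such relative hypersurface bound available to invoke. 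Moreover $\dim X'$ may exceed $n$, so for small $j$ the degree $n+j$ lies \emph{below} the middle dimension of $X'$ and your inductive hypothesis gives nothing there. Finally, you write ``assuming Lemma~\ref{lemma:complete-intersection-bound}'' as an input---but that lemma \emph{is} the statement you are proving, so this is circular. The missing idea is precisely the one the paper uses: slice $X$ itself (not $X'$, not $\overline{X}$) by generic hyperplanes until the target degree becomes the middle degree, and then invoke~(\ref{item-1a}), not~(\ref{item-1b}) or~(\ref{item-2b}).
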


Since $\mu_j(N;d_1,\ldots,d_r)\geq j$, Theorem~\ref{theorem:positive}
improves Deligne's integrality theorem (Theorem~\ref{theorem:deligne})
for affine variety $X$.  Also, this theorem gives a cohomological
interpretation of the polar result in~\cite{wan:poles} on zeta
functions of complete intersections.

While Theorem~\ref{theorem:positive} confirms Question~\ref{question},
the bounds are still not optimal.  Moreover, it does not provide
information of colevels of \(\mathrm{H}^{\ast}_{c}(X)\)
before the middle cohomological dimension if \(X\) is not a complete
intersection.  We shall establish new lower bounds of the colevels of
\(\mathrm{H}^{i}_{c}(X)\) for \emph{all} \(i\).  See
Theorems~\ref{theorem:beyond} and \ref{theorem:before}.  For the
cohomology groups beyond middle dimension, our new bounds will be
sharper than the ones proposed in the question above and hence better
than both Theorem~\ref{theorem:previous}(\ref{item-1b}) and Theorem
\ref{theorem:positive}; for cohomology groups before middle dimension,
our lower bounds will improve the Ax--Katz type bound of
Theorem~\ref{theorem:previous}(\ref{item-1a}).  Examples
\ref{example:det-1} and \ref{example:det-2} in the final section give
concrete illustrations of how our new bounds are improving the bounds
provided by Theorem~\ref{theorem:previous} and
Theorem~\ref{theorem:positive}.

From our refined colevel bounds for affine
varieties, one deduces formally ``projective bounds'' improving
Theorem~\ref{theorem:previous}(\ref{item-2a}, \ref{item-2b}). See Theorem~\ref{theorem:projective-case}.

These refined lower bounds were first discovered in our previous
Dwork-theoretic study of Frobenius eigenvalues of rigid cohomology
\cite{wz22}. We noticed there that our \(p\)-adic method is not
``formal'' enough to provide colevel bounds for \(\ell\)-adic and
Betti cohomology.

The proof we present in these notes combines several ideas: the
``Gysin lemma'' (Lemma~\ref{lemma:gysin}) already mentioned; a
dévissage designed by
del~Angel~\cite{del-angel:hodge-type-projective-varieties-low-degree}
and Esnault--Katz~\cite{esnault-katz:cohomological-divisibility} which
allows us to reduce the number of defining equations without
jeopardizing the lower bound, see
Lemma~\ref{lemma:almost-complete-intersection-colevel}; and finally a
crucial algebraic lemma devised in \cite{wz22} which shows that the
``core'' of the improved lower bounds comes from a set-theoretic
complete intersection, see Lemma~\ref{lemma:key}.

\section{Improved colevel bounds}
\label{sec:improved-bounds}

To state our refined colevel bounds, let us introduce
some sequences of numbers.

\begin{definition}\label{definition:nu}
Let \(d_1, d_2, \ldots, d_r\) and \(N\) be positive integers.  Assume
that \(d_1 \geq d_2 \geq \cdots \geq d_r\).  For each integer
\(e \in \mathbb{Z}\), and \(i = 1,2,\ldots,r\), we define
\begin{equation*}
d_i^{\ast}(e) =
\begin{cases}
  d_i, & \text{if } i\leq e, \\
  1,   & \text{if } i > e, \text{ and } d_i = d_1, \\
  0, &  \text{if } i > e, \text{ and }d_i < d_1.
\end{cases}
\end{equation*}
So if \(e \leq 0\), \(d_{i}^{\ast}(e) = 1\) or \(0\) depending whether
\(d_{i} = d_{1}\) or \(d_{i} < d_{1}\).  If \(e \geq r\), then
\(d_{i}^{\ast}(e) = d_{i}\).

Define
\[
\nu_j^{(e)}(N;d_1,\ldots,d_r) = j+
\max\left\{0, \left\lceil  \frac{N-j - \sum_{i=1}^{r}d_i^\ast(e)}{d_1}\right\rceil\right\}.
\]
\end{definition}
Since $d_i^*(e)$ is increasing in $e$, it is clear that $\nu_j^{(e)}(N;d_1,\ldots,d_r)$ is decreasing in $e$.
This gives the decreasing relation
\begin{align*}
  \cdots &= \nu^{(-1)} (N;d_{1},\ldots,d_{r}) = \nu^{(0)}(N;d_{1},\ldots,d_{r}) \\
         &\geq \nu_j^{(1)}(N;d_1,\ldots,d_r) \geq \cdots
           \geq \nu_j^{(r)}(N;d_1,\ldots,d_r)
           = \nu_{j}^{(r+1)}(N;d_{1},\ldots,d_{r}) = \cdots \\
         &= \mu_j(N;d_1,\ldots,d_r).
\end{align*}

Our improved colevel bounds beyond middle dimension is the following.

\begin{theorem}%
  \label{theorem:beyond}
  Let \(X\) be an \(n\)-dimensional Zariski closed subset of \(\mathbb{A}^N\)
  defined by polynomials \(f_1 = \cdots = f_r = 0\).
  Let \(d_1\geq \cdots \geq d_r\)
  be a sequence of positive integers such that \(d_i \geq \deg f_i\).
  Then the colevels of \(\mathrm{H}^{n+j}_{c}(X)\) and \(\mathrm{H}_{c}^{n+j+1}(\mathbb{A}^N\setminus X)\)
  are  at least
  \(\nu_j^{(N-n)}(N;d_1,\ldots,d_r)\) for all $0\leq j\leq n$.
\end{theorem}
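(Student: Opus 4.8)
The plan is to deduce Theorem~\ref{theorem:beyond} from the complete intersection case of Theorem~\ref{theorem:positive} (established as Lemma~\ref{lemma:complete-intersection-bound}) by two successive dévissages — one trimming the number of defining equations, one replacing the trimmed system by the equations of an auxiliary set-theoretic complete intersection whose degrees are exactly the truncated quantities $d_i^{\ast}(N-n)$ of Definition~\ref{definition:nu}. Throughout, the colevel conditions are formal: in a mixed Hodge structure (by strictness of the Hodge filtration) or in a Frobenius module (on the associated graded of any equivariant filtration), colevel $\geq m$ passes to subobjects, quotients and extensions, so in any three-term exact sequence $A\to B\to C$ the colevel of $B$ is at least the minimum of those of $A$ and $C$. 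As a first reduction, the long exact sequence
\[
\cdots\to\mathrm{H}^{k}_{c}(\mathbb{A}^N\setminus X)\to\mathrm{H}^{k}_{c}(\mathbb{A}^N)\to\mathrm{H}^{k}_{c}(X)\to\mathrm{H}^{k+1}_{c}(\mathbb{A}^N\setminus X)\to\cdots,
\]
combined with $\mathrm{H}^{k}_{c}(\mathbb{A}^N)=0$ for $k\neq 2N$, shows the two assertions are equivalent: we may assume $X\subsetneq\mathbb{A}^N$, so $n\leq N-1$ and hence $n+j+1\leq 2n+1\leq 2N-1<2N$ for $0\leq j\leq n$; thus $\mathrm{H}^{n+j}_{c}(X)\simeq\mathrm{H}^{n+j+1}_{c}(\mathbb{A}^N\setminus X)$, and I will bound the colevel of the latter.

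For the first dévissage I would use Lemma~\ref{lemma:almost-complete-intersection-colevel}, the device of del~Angel and Esnault--Katz, fed by the Gysin lemma (Lemma~\ref{lemma:gysin}): one peels the $f_i$ off one at a time, the Gysin map controlling the degree shifts in the ensuing long exact sequences so that the ``beyond middle dimension'' gain (the shift by $j$, as in Deligne's Theorem~\ref{theorem:deligne}) is not lost, and one is left — after restricting to the top-dimensional part of $X$, the lower-dimensional components contributing in degree $n+j$ only cohomology whose colevel is controlled by Deligne's theorem and a dimension induction — with a system whose degree multiset still dominates $\{d_i^{\ast}(N-n)\}_i$. The second dévissage is the algebraic Lemma~\ref{lemma:key} of~\cite{wz22}: taking generic linear combinations of the equations, together with one generic linear form for each degree equal to $d_1$ occurring among $f_{N-n+1},\dots,f_r$, produces a regular sequence of polynomials of the sorted degrees $d_1^{\ast}(N-n)\geq d_2^{\ast}(N-n)\geq\cdots$ whose zero locus $Z$ is a genuine complete intersection, and — this is the substance of the lemma — such that the part of $\mathrm{H}^{\bullet}_{c}(\mathbb{A}^N\setminus X)$ not already accounted for by $\mathbb{A}^N\setminus Z$ sits in an exact sequence with cohomology of a variety of strictly smaller complexity.

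To finish, I would apply Theorem~\ref{theorem:positive} to the complete intersection $Z$: the colevel of $\mathrm{H}^{n+j+1}_{c}(\mathbb{A}^N\setminus Z)$ is at least
\[
\mu_j\bigl(N;\,d_1^{\ast}(N-n),\,d_2^{\ast}(N-n),\,\dots\bigr)=\nu_j^{(N-n)}(N;d_1,\dots,d_r),
\]
the equality being immediate from Definitions~\ref{definition:mu} and~\ref{definition:nu} (using $d_1^{\ast}(N-n)=d_1$, valid since $n\leq N-1$). The remaining ``error'' term is one equation away from $Z$, so it is again covered by Lemma~\ref{lemma:almost-complete-intersection-colevel}, or has smaller dimension and is disposed of by induction together with Deligne's Theorem~\ref{theorem:deligne} (which already yields colevel $\geq j$, enough in the range where $\nu_j^{(N-n)}=j$). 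Threading these bounds back through the exact sequences above, and using that colevel is inherited by subquotients and extensions, gives $\mathrm{H}^{n+j+1}_{c}(\mathbb{A}^N\setminus X)$ — hence $\mathrm{H}^{n+j}_{c}(X)$ — colevel at least $\nu_j^{(N-n)}(N;d_1,\dots,d_r)$.

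The hard part will be Lemma~\ref{lemma:key}: showing that a purely generic linear-algebra construction from $f_1,\dots,f_r$ — taking exactly the right number of random linear combinations of the equations and random hyperplanes — yields a regular sequence carving out the desired set-theoretic complete intersection, while the leftover locus contributes, in cohomological degree $n+j$, only classes of colevel already $\geq\nu_j^{(N-n)}$. Pinning down \emph{how many} generic slices are needed, and matching this to the bookkeeping of how many of the $d_i$ attain the maximum $d_1$ beyond the $(N-n)$-th position — which is exactly what separates $\nu_j^{(N-n)}$ from the Ax--Katz-type bound $\mu_0$ of Theorem~\ref{theorem:previous} on the one side and from the complete-intersection bound $\mu_j$ on the other — is the delicate combinatorial-geometric heart of the argument, and the one genuinely new input beyond the existing literature.
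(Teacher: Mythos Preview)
Your description of Lemma~\ref{lemma:key} is not what that lemma says, and this is where the proposal breaks down. Lemma~\ref{lemma:key} does \emph{not} manufacture a complete intersection $Z$ whose defining degrees are the truncated numbers $d_i^{\ast}(N-n)$; it only replaces $f_1,\dots,f_r$ by $g_1,\dots,g_r$ with $\deg g_i\leq d_i$ and the \emph{same} zero locus $X$, such that the first $N-n$ of them already cut out a set-theoretic complete intersection $V\supset X$ of dimension $n$. No generic linear forms enter, and the $d_i^{\ast}(N-n)$ are never degrees of actual equations in the argument: they are purely a combinatorial device in the definition of $\nu_j^{(e)}$, and the passage from the bound $\mu_j(N;d_1,\dots,d_{N-n})$ for $V$ to $\nu_j^{(N-n)}(N;d_1,\dots,d_r)$ is effected by the elementary inequalities of Lemma~\ref{lemma:properties-of-nu}, not by constructing a new variety. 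Your hypothetical $Z$ would in any case have dimension $n-\#\{i>N-n:d_i=d_1\}<n$ whenever the improvement over $\mu_j$ is nontrivial, so it cannot contain $X$, and you give no mechanism to compare $\mathrm{H}^{n+j}_c(X)$ with cohomology of $Z$.

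The second structural gap is the induction itself. The paper does not ``peel off the $f_i$ one at a time''; once hypothesis~\eqref{eq:good-case} is in force it writes $X=\bigcap_{i=N-n+1}^r X_i$ with $X_i=V\cap\{f_i=0\}$ and applies the Mayer--Vietoris spectral sequence of Lemma~\ref{lemma:spectral-sequence} to this covering. The partial intersections $\bigcap_{i\in I}X_i$ with $|I|<r-(N-n)$ are handled by the inductive hypothesis (fewer extra equations, same $V$), and the union $\bigcup_i X_i$ --- cut from $V$ by the single product equation $\prod_{i>N-n}f_i$ --- is handled by Lemma~\ref{lemma:almost-complete-intersection-colevel}. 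The bookkeeping that matches these bounds to $\nu_j^{(N-n)}$ is exactly Lemma~\ref{lemma:properties-of-nu}(\ref{property-3},\ref{property-4}). Your proposal mentions neither the spectral sequence nor this pattern of induction, and the vague ``variety of strictly smaller complexity'' in the error term does not substitute for it.
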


Our improved colevel bounds before middle dimension is the following.

\begin{theorem}%
  \label{theorem:before}
  In the situation of Theorem \ref{theorem:beyond},
  for \(0\leq m \leq r-(N-n)\), the colevels of
  \(\mathrm{H}^{N-r+m}_{c}(X)\) and \(\mathrm{H}_{c}^{N-r+m+1}(\mathbb{A}^N\setminus X)\) are at least
  \(\nu_0^{(r-m)}(N;d_1,\ldots,d_r)\).
\end{theorem}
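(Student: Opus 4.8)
The plan is to bootstrap Theorem~\ref{theorem:before} from Theorem~\ref{theorem:beyond} by cutting $X$ down with generic hyperplanes. A generic hyperplane section drops the dimension by one while keeping the ambient $\mathbb{A}^{N}$ fixed, so after $k$ of them the cohomological degree $N-r+m$ becomes the top dimension of the section; Theorem~\ref{theorem:beyond} then applies to that section with $j=0$, and a weak Lefschetz statement (Lemma~\ref{lemma:gysin}) pushes the resulting colevel bound back up to $X$.

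First the reductions. Since $\mathrm{H}^{i}_{c}(\mathbb{A}^{N})=0$ for $i<2N$, the long exact sequence of the pair $(\mathbb{A}^{N},X)$ yields $\mathrm{H}^{N-r+m}_{c}(X)\simeq\mathrm{H}^{N-r+m+1}_{c}(\mathbb{A}^{N}\setminus X)$ in the relevant range $N-r+m\le n$, so it is enough to bound the colevel of $\mathrm{H}^{N-r+m}_{c}(X)$. If $N-r+m<0$ there is nothing to prove; and if every $d_{i}=1$ then $X$ is a linear subspace (or empty), $\mathrm{H}^{i}_{c}(X)=0$ for $0\le i\le n$ unless $n=0$, and $\nu_{0}^{(r-m)}(N;1,\dots,1)=0$ when $n=0$, so again there is nothing to prove. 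Hence I may assume $d_{1}\ge 2$ and $0\le N-r+m\le n$; set $i:=N-r+m$ and $k:=n-i=r-(N-n)-m$, so $0\le k\le n$.

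Now choose generic hyperplanes $H_{1},\dots,H_{k}\subseteq\mathbb{A}^{N}$ and put $X^{(\ell)}:=X\cap H_{1}\cap\cdots\cap H_{\ell}$; for generic $H_{\bullet}$ one has $\dim X^{(\ell)}=n-\ell$, in particular $\dim X^{(k)}=i$. As a closed subset of $\mathbb{A}^{N}$, $X^{(k)}$ is defined by the $r+k$ polynomials $f_{1},\dots,f_{r},\ell_{1},\dots,\ell_{k}$ with the $\ell_{j}$ linear, of degrees bounded by the sorted sequence $d_{1}\ge\cdots\ge d_{r}\ge 1=\cdots=1$ (with $k$ trailing $1$'s). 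Applying Theorem~\ref{theorem:beyond} to the $i$-dimensional closed set $X^{(k)}\subseteq\mathbb{A}^{N}$ with $j=0$ shows that $\mathrm{H}^{i}_{c}(X^{(k)})$ has colevel $\ge\nu_{0}^{(N-i)}(N;d_{1},\dots,d_{r},1,\dots,1)$. Since $N-i=r-m$, and since $d_{1}\ge 2$ so that each appended $1$ is strictly smaller than $d_{1}$ and therefore contributes $0$ to the relevant sum in Definition~\ref{definition:nu}, one checks directly that
\[
\nu_{0}^{(N-i)}(N;d_{1},\dots,d_{r},1,\dots,1)=\nu_{0}^{(r-m)}(N;d_{1},\dots,d_{r}),
\]
which is precisely the bound asserted in Theorem~\ref{theorem:before}.

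It remains to transport this bound from $X^{(k)}$ up to $X$. For $\ell=1,\dots,k$, the excision sequence of the open--closed decomposition $X^{(\ell-1)}=(X^{(\ell-1)}\setminus H_{\ell})\sqcup X^{(\ell)}$ together with Lemma~\ref{lemma:gysin} shows that the restriction $\mathrm{H}^{i}_{c}(X^{(\ell-1)})\to\mathrm{H}^{i}_{c}(X^{(\ell)})$ is injective in the degree $i\le\dim X^{(\ell)}$ at hand; composing over $\ell$ gives $\mathrm{H}^{i}_{c}(X)\hookrightarrow\mathrm{H}^{i}_{c}(X^{(k)})$. Since an injection of mixed Hodge structures (resp.\ of Frobenius modules) cannot lower the colevel, $\mathrm{H}^{i}_{c}(X)$ has colevel $\ge\nu_{0}^{(r-m)}(N;d_{1},\dots,d_{r})$, as desired.

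The main obstacle is this last step. The weak Lefschetz injectivity is needed in degrees reaching all the way up to the top affine dimension of the successive sections --- exactly the range in which the vanishing $\mathrm{H}^{<\dim}_{c}=0$ valid for smooth affine varieties can genuinely fail for singular ones (for the union of two planes meeting at a point, $\mathrm{H}^{1}_{c}$ of the complement of a generic hyperplane is already nonzero), so one cannot simply quote the affine Lefschetz theorem; supplying this is precisely what Lemma~\ref{lemma:gysin} is designed to do. A secondary point requiring care is that $X$, or some $X^{(\ell)}$, may carry irreducible components of below-expected dimension, which must be peeled off and handled separately --- though, as in the trivial cases above, the bounds they are required to satisfy turn out to be weak or vacuous.
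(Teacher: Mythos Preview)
Your transport step has a genuine gap. Lemma~\ref{lemma:gysin} produces a \emph{Gysin} map
\(\mathrm{H}^{i-2}_{c}(X\cap B;\Lambda(-1))\to\mathrm{H}^{i}_{c}(X)\)
which is surjective for \(i=n+1\) and bijective for \(i\geq n+2\); via Verdier duality this is equivalent to the weak Lefschetz injectivity of the restriction \(\mathrm{H}^{i}(X)\to\mathrm{H}^{i}(X\cap B)\) on \emph{ordinary} cohomology in low degrees (Theorem~\ref{theorem:weak-lefschetz}). Neither statement yields injectivity of the restriction \(\mathrm{H}^{i}_{c}(X)\to\mathrm{H}^{i}_{c}(X\cap B)\) on compactly supported cohomology below the middle, and in fact this injectivity is false. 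Take \(X=\{x_1=x_2=0\}\cup\{x_3=x_4=0\}\subset\mathbb{A}^{4}\), two \(2\)-planes meeting only at the origin (so \(N=4\), \(n=2\), and \(X\) is cut out by \(r=4\) quadratic equations \(x_1x_3,x_1x_4,x_2x_3,x_2x_4\)). One computes \(\mathrm{H}^{1}_{c}(X)\simeq\Lambda\) (coming from the failure of local irreducibility at \(0\)), but a generic hyperplane \(H\) misses the origin, so \(X\cap H\) is two disjoint affine lines and \(\mathrm{H}^{1}_{c}(X\cap H)=0\). With \(m=1\) this is exactly the step \(\ell=k=1\), \(i=1=\dim X^{(1)}\) of your argument. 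Equivalently, in the excision sequence the open piece \(X\setminus H\) is an affine surface with \(\mathrm{H}^{1}_{c}\neq 0\): Artin vanishing for affines controls \(\mathrm{H}^{>\dim}\), not \(\mathrm{H}^{<\dim}_{c}\), and there is no ``dual Gysin lemma'' supplying the missing vanishing.

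The paper therefore argues quite differently. After using Lemma~\ref{lemma:key} to arrange that \(f_1,\ldots,f_{N-n}\) already cut out an \(n\)-dimensional complete intersection \(V\), it writes \(X=\bigcap_{i>N-n}X_i\) with \(X_i=V\cap\{f_i=0\}\) and runs the Mayer--Vietoris spectral sequence of Lemma~\ref{lemma:spectral-sequence}; the terms that hit \(\mathrm{H}^{N-r+m}_{c}(X)\) are cohomologies of partial intersections \(\bigcap_{i\in I}X_i\) with fewer extra equations, handled by induction on \(r-(N-n)\), together with the union \(\bigcup X_i\), handled by Lemma~\ref{lemma:almost-complete-intersection-colevel}. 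Your Step~1 is correct and rather slick---applying Theorem~\ref{theorem:beyond} to \(X^{(k)}\) with the appended linear equations does give exactly \(\nu_0^{(r-m)}(N;d_1,\ldots,d_r)\)---but without a valid way to push the bound back up from \(X^{(k)}\) to \(X\) the argument does not close.
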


Under the hypothesis of Theorems~\ref{theorem:beyond} and  \ref{theorem:before},
we have
\(\mathrm{H}^{i}_{c}(X) = 0\) for \(i < N-r\)
(this is well-known, an elementary argument is given in \cite[Lemma~6.2]{wz22}); also by general facts about cohomology,
we have
\(\mathrm{H}^{n+j}_{c}(X) = 0\) for \(j > n\).
Thus the two theorems cover all possibly nonzero compactly supported cohomology groups of
\(X\). The integer $r-(N-n)$ is always non-negative. It is zero if and only if
$X$ is a set-theoretic complete
intersection by \(r\) equations.

The assertions for the complement \(\mathbb{A}^N \setminus X\) follow from
the results about \(X\) by using the long exact sequence
\begin{equation*}
  \cdots \to \mathrm{H}^{i}_{c}(\mathbb{A}^N \setminus X) \to
  \mathrm{H}^{i}_{c}(\mathbb{A}^N) \to \mathrm{H}_{c}^{i}(X) \to \cdots
\end{equation*}
and the well-known fact that
\begin{equation*}
  \mathrm{H}^{i}_{c}(\mathbb{A}^N) =
  \begin{cases}
    0, & i \neq 2N, \\
    \Lambda(-N), & i = 2N.
  \end{cases}
\end{equation*}
Thus, Theorems~\ref{theorem:beyond} and \ref{theorem:before}
cover all possibly nonzero compactly supported cohomology of \(\mathbb{A}^N\setminus X\)
except the top-dimensional (\(i=2N\)) one, which is trivially of colevel \(N\).

Since $\nu_j^{(N-n)}(N;d_1,\ldots,d_r) \geq \mu_j(N;d_1,\ldots,d_r)$,
we see Theorem \ref{theorem:beyond} implies, under its hypothesis, that

\begin{corollary}
The colevel of \(\mathrm{H}^{n+j}_{c}(X)\) is at least
\(\mu_j(N;d_1,\ldots,d_r)\).
\end{corollary}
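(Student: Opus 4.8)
The plan is to read this off directly from Theorem~\ref{theorem:beyond} by comparing the two numerical sequences. First I would apply Theorem~\ref{theorem:beyond}, which already gives that the colevel of $\mathrm{H}^{n+j}_{c}(X)$ is at least $\nu_j^{(N-n)}(N;d_1,\ldots,d_r)$ for all $0\leq j\leq n$. Thus the corollary is reduced to the purely combinatorial inequality
\[
\nu_j^{(N-n)}(N;d_1,\ldots,d_r)\ \geq\ \mu_j(N;d_1,\ldots,d_r),\qquad 0\leq j\leq n.
\]

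To establish this I would invoke two elementary observations. The first is the dimension estimate: since $X\subseteq\mathbb{A}^N$ is cut out by $r$ equations, Krull's height theorem gives $n=\dim X\geq N-r$, i.e.\ $N-n\leq r$ (this is the already-noted fact that $r-(N-n)\geq 0$). The second is the monotonicity recorded right after Definition~\ref{definition:nu}: because $d_i^{\ast}(e)$ is non-decreasing in $e$ by its very definition, the numerator $N-j-\sum_i d_i^{\ast}(e)$ is non-increasing in $e$, hence $\nu_j^{(e)}(N;d_1,\ldots,d_r)$ is non-increasing in $e$; and for $e\geq r$ one has $d_i^{\ast}(e)=d_i$ for every $i$, so $\nu_j^{(r)}(N;d_1,\ldots,d_r)=\mu_j(N;d_1,\ldots,d_r)$ upon comparing Definitions~\ref{definition:mu} and~\ref{definition:nu}. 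Combining, $\nu_j^{(N-n)}\geq\nu_j^{(r)}=\mu_j$, which is exactly what was needed.

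There is no real obstacle here: the entire substance of the corollary is already contained in Theorem~\ref{theorem:beyond}, and the only thing left to verify is the comparison of the two sequences, which in turn rests solely on the codimension bound $N-n\leq r$ and the monotonicity of $\nu_j^{(e)}$ in $e$ — both immediate. Accordingly I expect the ``hard part'' of writing this up to amount to nothing more than citing the right earlier statements, so that a one-sentence proof suffices.
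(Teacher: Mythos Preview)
Your proposal is correct and matches the paper's own argument: the corollary is presented there as an immediate consequence of Theorem~\ref{theorem:beyond} together with the inequality $\nu_j^{(N-n)}(N;d_1,\ldots,d_r)\geq\mu_j(N;d_1,\ldots,d_r)$, which follows from the monotonicity in $e$ and $N-n\leq r$ exactly as you describe.
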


Thus, Theorem~\ref{theorem:beyond} gives an affirmative answer to Question~\ref{question} and hence also
improves Theorem~\ref{theorem:positive}. For if $X$ is a complete intersection by given \(r\) equations, namely, $r=N-n$, one has
$\nu_j^{(N-n)}(N;d_1,\ldots,d_r) = \mu_j(N;d_1,\ldots,d_r)$. In this case, Theorem~\ref{theorem:beyond}
is the same as Theorem~\ref{theorem:positive}. Thus, strict improvements of Theorem~\ref{theorem:beyond}
over Theorem~\ref{theorem:positive} can only occur for non-complete intersections.
See Examples~\ref{example:det-1} and \ref{example:det-2}.

Similarly, since $\nu_0^{(r-m)}(N;d_1,\ldots,d_r) \geq \mu_0(N;d_1,\ldots,d_r)$, the lower bounds given by Theorem \ref{theorem:before}
are improvements of the Ax--Katz type bound \(\mu_0(N;d_1,\ldots,d_r)\).

From Theorems~\ref{theorem:beyond} and \ref{theorem:before}, we can formally
deduce the following projective version which is an improvement of Theorem~\ref{theorem:previous}(\ref{item-2a}, \ref{item-2b}).

\begin{theorem}%
\label{theorem:projective-case}
Let \(Y\) be a Zariski closed subset of \(\mathbb{P}^{N}\) of dimension \(n\),
defined by homogeneous polynomial equations \(g_1 = \cdots = g_r = 0\).
Assume that \(\deg g_i \leq d_i\), \(d_1 \geq \cdots \geq d_r\).
Then
\begin{itemize}
\item for all integers \(0 \leq j \leq n\), the colevels of \(\mathrm{H}^{n+j}(Y)_{\mathrm{prim}}\) and
\(\mathrm{H}^{n+j+1}_{c}(\mathbb{P}^N\setminus Y)\) are at least \(\nu^{(N-n)}_{j}(N+1;d_1,\ldots,d_r)\);
\item for all integers \(0\leq m \leq r-(N-n)\), the colevels of \(\mathrm{H}^{N-r+m}(Y)_{\mathrm{prim}}\) and
\(\mathrm{H}^{N-r+m+1}_{c}(\mathbb{P}^N\setminus Y)\) are at least \(\nu^{(r-m)}_{0}(N+1;d_1,\ldots,d_r)\).
\end{itemize}
\end{theorem}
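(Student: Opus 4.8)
The plan is to turn the projective data into affine data by passing to the affine cone over $Y$, and then to transport the affine colevel bounds of Theorems~\ref{theorem:beyond} and \ref{theorem:before} down to $\mathbb{P}^{N}$ along a $\mathbb{G}_{m}$-bundle. As a first reduction, I would pass from the complement statements to the statements about $\mathrm{H}^{\ast}(Y)_{\mathrm{prim}}$, exactly as in the affine case treated above. After replacing $Y$ by its reduction and treating the connected components (and any components of dimension $N$) separately, we may assume $Y$ connected of pure dimension $n$; then $h^{a}|_{Y}\neq 0$ in $\mathrm{H}^{2a}(Y)$ for $0\le a\le n$, so $\mathrm{H}^{2a}(\mathbb{P}^{N})\to\mathrm{H}^{2a}(Y)$ is injective there. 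Feeding this into the long exact sequence of the pair $(\mathbb{P}^{N},Y)$, and using that $\mathrm{H}^{\ast}(\mathbb{P}^{N})$ is a direct sum of Tate twists $\Lambda(-a)$ of colevel $a$, one obtains $\mathrm{H}^{i+1}_{c}(\mathbb{P}^{N}\setminus Y)\cong\mathrm{H}^{i}(Y)_{\mathrm{prim}}$ for $0\le i\le 2n$, while $\mathrm{H}^{k}_{c}(\mathbb{P}^{N}\setminus Y)$ for $k>2n+1$ is a sum of $\Lambda(-a)$ with $a\ge n+1$, of colevel $\ge n+1$. So it is enough to bound the colevels of the groups $\mathrm{H}^{\ast}(Y)_{\mathrm{prim}}$.

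Next I would introduce the affine cone $C=\mathrm{Cone}(Y)\subseteq\mathbb{A}^{N+1}$, cut out by $g_{1},\dots,g_{r}$ regarded as homogeneous polynomials on $\mathbb{A}^{N+1}$: then $\dim C=n+1$, $C$ is defined by $r$ equations of degrees $\le d_{i}$, and the projection $\pi\colon\mathbb{A}^{N+1}\setminus C\to\mathbb{P}^{N}\setminus Y$ is a $\mathbb{G}_{m}$-bundle whose Euler class is, up to sign, the restriction of the hyperplane class. The point of coning, rather than dehomogenizing, is the numerical coincidence $(N+1)-(n+1)=N-n$ together with $r-\bigl((N+1)-(n+1)\bigr)=r-(N-n)$: applying Theorems~\ref{theorem:beyond} and \ref{theorem:before} to $C\subseteq\mathbb{A}^{N+1}$ therefore yields, in every cohomological degree, colevel bounds for $\mathrm{H}^{\ast}_{c}(C)$ and $\mathrm{H}^{\ast}_{c}(\mathbb{A}^{N+1}\setminus C)$ phrased exactly through $\nu^{(N-n)}_{\bullet}(N+1;d_{1},\dots,d_{r})$ and $\nu^{(r-\bullet)}_{0}(N+1;d_{1},\dots,d_{r})$, which are the quantities occurring in the statement to be proved.

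It remains to transport these bounds through the Gysin long exact sequence of $\pi$,
\[
\cdots\to\mathrm{H}^{k}_{c}(\mathbb{P}^{N}\setminus Y)\to\mathrm{H}^{k+1}_{c}(\mathbb{A}^{N+1}\setminus C)\to\mathrm{H}^{k-1}_{c}(\mathbb{P}^{N}\setminus Y)(-1)\xrightarrow{\ \cup c_{1}\ }\mathrm{H}^{k+1}_{c}(\mathbb{P}^{N}\setminus Y)\to\cdots,
\]
in which $\mathrm{H}^{k}_{c}(\mathbb{P}^{N}\setminus Y)$ is an extension of a subquotient of $\mathrm{H}^{k+1}_{c}(\mathbb{A}^{N+1}\setminus C)$ by the image of $\cup c_{1}$, the latter a quotient of $\mathrm{H}^{k-2}_{c}(\mathbb{P}^{N}\setminus Y)(-1)$. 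Since the colevel is inherited by subobjects, quotients, and extensions, and the twist by $\Lambda(-1)$ raises it by one, the bounds can be propagated by induction on $k$: in the range where the required bound is only Deligne's value $\max\{0,\ast\}$ one simply invokes Deligne's integrality theorem (Theorem~\ref{theorem:deligne}) for the proper variety $Y$, and in the remaining range one checks that the cone estimates dovetail with the inductive step. Unwinding through the isomorphisms of the first paragraph then recovers the asserted inequalities for $\mathrm{H}^{n+j}(Y)_{\mathrm{prim}}$ and $\mathrm{H}^{N-r+m}(Y)_{\mathrm{prim}}$, and hence for the complements.

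The hard part will be this last step: because the Gysin connecting map carries a Tate twist, a colevel bound on the total space of $\pi$ only controls $\mathrm{H}^{\ast}_{c}(\mathbb{P}^{N}\setminus Y)$ modulo the image of $\cup c_{1}$, so a naive degree-induction can lose one unit whenever the piecewise-linear sequences $\nu^{(N-n)}_{\bullet}$ jump. To close the induction I expect one needs a case analysis according to which linear branch of $\nu$ one sits on — isolating the regime where the bound is merely Deligne's, which holds unconditionally, from the regime where the cone bound is strictly better — together with the elementary step inequalities between the relevant $\nu$'s, and possibly an argument using the full exact sequence rather than only its graded pieces. Everything else (the reductions of the first two paragraphs and the exact sequences) is the sort of formal manipulation signalled by the word ``formally'' in the statement.
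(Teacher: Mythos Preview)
Your broad strategy---pass to the affine cone $C\subset\mathbb{A}^{N+1}$ and run the Gysin sequence of the $\mathbb{G}_{m}$-bundle $\mathbb{A}^{N+1}\setminus C\to\mathbb{P}^{N}\setminus Y$---is exactly what the paper does, and your reduction to $\mathrm{H}^{\ast}(Y)_{\mathrm{prim}}$ together with the numerical observation $(N+1)-(n+1)=N-n$ is correct.

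The gap is precisely the step you flag as ``the hard part'', and it is real: your proposed induction on the cohomological degree $k$ does not close. The Gysin sequence expresses $\mathrm{H}^{n+j+1}_{c}(\mathbb{P}^{N}\setminus Y)$ as an extension of a subquotient of $\mathrm{H}^{n+j+2}_{c}(\mathbb{A}^{N+1}\setminus C)$ (colevel $\geq\nu^{(N-n)}_{j}$ by the cone bound) by the image of $\smile c_{1}$ from $\mathrm{H}^{n+j-1}_{c}(\mathbb{P}^{N}\setminus Y)(-1)$. Induction on $k$ only gives the latter colevel $\geq 1+\nu^{(N-n)}_{j-2}$, and the increment $\nu^{(N-n)}_{j}-\nu^{(N-n)}_{j-2}$ can equal $2$ \emph{strictly inside the non-Deligne regime}: for instance with $d_{1}=3$ the sequence $(\nu_{j})_{j\geq 0}$ can read $4,4,5,6,6,7,\dots$, so $\nu_{3}-\nu_{1}=2$ while $\nu_{3}=6>3$. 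Neither a Deligne/non-Deligne case split nor ``using the full exact sequence'' recovers the missing unit, because the connecting map can genuinely be zero. The same obstruction (step $2$ in $e$) occurs in the ``before middle'' range.

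The paper closes this with an extra geometric input absent from your sketch: it inducts on $n=\dim Y$ rather than on $k$, and uses that the cup product $\smile c_{1}(\mathcal{O}_{Y}(1))\colon\mathrm{H}^{i-2}(Y;\Lambda(-1))\to\mathrm{H}^{i}(Y)$ factors through $\mathrm{H}^{i-2}(Y\cap A;\Lambda(-1))$ for \emph{any} hyperplane $A$ (because the cycle class of $A$ represents $c_{1}$). Choosing $A$ general so that $\dim(Y\cap A)=n-1$ in $A\simeq\mathbb{P}^{N-1}$, the inductive hypothesis bounds the colevel of the image by $1+\nu^{(N-n)}_{j-1}(N;d_{1},\dots,d_{r})$, and the identity $1+\nu^{(e)}_{j-1}(N;\dots)=\nu^{(e)}_{j}(N+1;\dots)$ makes the induction close \emph{exactly}; the companion inequality $1+\nu^{(e+1)}_{0}(N;\dots)\geq\nu^{(e)}_{0}(N+1;\dots)$ handles the ``before middle'' part. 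The paper in fact remarks that an earlier version (in \cite{wz22}) had essentially the gap you are worried about, and that this hyperplane-section factorization was the correction.
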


\medskip It is worth remarking that we \emph{do not} give new proofs of
Theorems~\ref{theorem:previous}(\ref{item-1a}, \ref{item-2a}) (which are
ultimately based on the Ax--Katz theorem, a theorem whose known proofs are all
\(p\)-adic in nature).  Rather, these results are the starting point for our proofs of
Theorems~\ref{theorem:beyond} and \ref{theorem:before}.

As an arithmetic application, we give the following corollary which improves the ``polar'' part of
the Ax--Katz theorem for the zeta function of an arbitrary affine variety, extending the complete intersection case treated in~\cite{wan:poles}.
This result can also be derived from the corresponding Frobenius colevel bounds in rigid cohomology \cite{wz22}.

\begin{corollary}
\label{corollary:polar}
In the situation of Theorem \ref{theorem:beyond},  assume now that $k$ is a finite field of $q$ elements.
\begin{enumerate}
\item If $r=N-n$ (complete intersection), then all reciprocal poles of the zeta function $Z_X(t)^{(-1)^{N-r+1}}$
are divisible, as algebraic integers, by $q^{\mu_1(N; d_1,\cdots, d_r)}$.

\item If $r > N-n$ (non--complete intersection),  then all reciprocal poles of the zeta function $Z_X(t)^{(-1)^{N-r+1}}$
are divisible, as algebraic integers, by $q^{\nu_0^{(r-1)}(N; d_1,\cdots, d_r)}$.
\end{enumerate}
\end{corollary}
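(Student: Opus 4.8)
The plan is to deduce the corollary from Theorems~\ref{theorem:beyond} and~\ref{theorem:before} through the Grothendieck--Lefschetz trace formula, by reading off which cohomology groups carry the poles. Writing $P_i(t) = \det\!\bigl(1 - F_q t \mid \mathrm{H}^{i}_{c}(X;\mathbb{Q}_{\ell})\bigr)$ one has $Z_X(t) = \prod_{i=0}^{2n} P_i(t)^{(-1)^{i+1}}$, hence
\[
Z_X(t)^{(-1)^{N-r+1}} = \prod_{i} P_i(t)^{(-1)^{i+N-r}} .
\]
Each $P_i$ is a polynomial with constant term $1$, so a pole of the right-hand side can only occur at a zero of some $P_i$ whose exponent $(-1)^{i+N-r}$ equals $-1$; thus every reciprocal pole is a Frobenius eigenvalue on $\mathrm{H}^{i}_{c}(X)$ for some $i$ with $i \equiv N - r + 1 \pmod 2$. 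Since $\mathrm{H}^{i}_{c}(X) = 0$ for $i < N - r$ (by \cite[Lemma~6.2]{wz22}) and for $i > 2n$, the degrees that can actually contribute are those with $N - r < i \leq 2n$. As ``colevel $\geq \mu$'' means precisely that every Frobenius eigenvalue is divisible by $q^{\mu}$ in the ring of algebraic integers, it suffices to bound the colevel of $\mathrm{H}^{i}_{c}(X)$ below by the stated exponent for each such~$i$.

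In case~(1), $r = N - n$, so the contributing degrees are $i = n + j$ with $1 \leq j \leq n$. Theorem~\ref{theorem:beyond} bounds the colevel of $\mathrm{H}^{n+j}_{c}(X)$ below by $\nu_j^{(N-n)}(N;d_1,\ldots,d_r)$, which equals $\mu_j(N;d_1,\ldots,d_r)$ since $N - n = r$, and $\mu_j \geq \mu_1$ for $j \geq 1$ because $\mu_\bullet$ is increasing. In case~(2), $r > N - n$; here I would split the contributing degrees into the ranges $N - r < i \leq n$ and $n < i \leq 2n$. For $i = N - r + m$ in the first range, with $1 \leq m \leq r - (N-n)$, Theorem~\ref{theorem:before} gives colevel $\geq \nu_0^{(r-m)}(N;d_1,\ldots,d_r) \geq \nu_0^{(r-1)}(N;d_1,\ldots,d_r)$, using that $\nu_0^{(e)}$ is decreasing in $e$ and $m \geq 1$. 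For $i = n + j$ in the second range, Theorem~\ref{theorem:beyond} gives colevel $\geq \nu_j^{(N-n)}(N;d_1,\ldots,d_r)$, and I would then invoke the elementary inequality $\nu_j^{(N-n)}(N;d_1,\ldots,d_r) \geq \nu_0^{(r-1)}(N;d_1,\ldots,d_r)$: from the identity $\nu_j^{(e)}(N;\ldots) = j + \nu_0^{(e)}(N-j;\ldots)$ and the ceiling estimate $\lceil a/d_1 \rceil \leq j + \lceil (a-j)/d_1 \rceil$ one gets $\nu_j^{(N-n)}(N;\ldots) \geq \nu_0^{(N-n)}(N;\ldots)$, and finally $\nu_0^{(N-n)}(N;\ldots) \geq \nu_0^{(r-1)}(N;\ldots)$ because $N - n \leq r - 1$. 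In every case each contributing cohomology group has colevel at least the claimed exponent, which finishes the argument.

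The mathematical input is entirely Theorems~\ref{theorem:beyond} and~\ref{theorem:before}; the part demanding care is purely combinatorial. One must correctly isolate the cohomological degrees that feed the \emph{poles} (rather than the zeros) of the renormalized zeta function --- this is the role of the parity constraint $i \equiv N - r + 1 \pmod 2$ and of the exponent $(-1)^{N-r+1}$ --- and one must then check that the weakest of the colevel lower bounds across those degrees is exactly $\mu_1$ in the complete-intersection case and $\nu_0^{(r-1)}$ in general. This last step is where the monotonicity of the $\mu_\bullet$ and $\nu_\bullet^{(e)}$ sequences and the small ceiling inequality above are used; none of it is deep, but it is the only place where an off-by-one in the exponent could creep in, so I would verify the indexing against a smooth affine hypersurface before trusting it.
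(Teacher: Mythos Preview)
Your proof is correct and follows essentially the same route as the paper's: both write $Z_X(t)^{(-1)^{N-r+1}}$ via the trace formula, observe that the bottom degree $i=N-r$ lands in the numerator (you phrase this as the parity exclusion $i\equiv N-r+1\pmod 2$), and then bound every remaining cohomology group using Theorems~\ref{theorem:beyond} and~\ref{theorem:before} together with the monotonicity of the $\nu$-numbers. The only difference is cosmetic: the paper simply quotes the chain $\nu_0^{(r)}\leq\cdots\leq\nu_0^{(N-n)}\leq\nu_1^{(N-n)}\leq\cdots$ from Lemma~\ref{lemma:properties-of-nu}, whereas you re-derive the needed inequalities by hand.
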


\begin{proof} The following sequence of colevel bounds is increasing:
\[
\nu_0^{(r)}(N; d_1, \cdots,d_r) \leq  \cdots \leq \nu_0^{(N-n)}(N; d_1, \cdots, d_r) \leq \nu_1^{(N-n)}(N; d_1, \cdots, d_r)
\leq \cdots .
\]
By the trace formula, the zeta function $Z_X(t)^{(-1)^{N-r+1}}$ is given by the product
\[
\prod_{m=0}^{r-(N-n)} \det(I-t\cdot F\mid\mathrm{H}_{c}^{N-r+m}(X))^{(-1)^m}
\prod_{j=1}^{n} \det(I-t\cdot F\mid\mathrm{H}_{c}^{n+j}(X))^{(-1)^{N-r+n+j}},
\]
where $F$ is the geometric Frobenius map. The factor for $m=0$ is clearly in
the numerator. By Theorems~\ref{theorem:beyond} and \ref{theorem:before},
the reciprocal roots of all other factors
(including all polar factors) are divisible, as algebraic integers,
by $q^{\nu_0^{(r-1)}(N; d_1,\cdots, d_r)}$ ($m=1$) if $r>N-n$, and
by $q^{\nu_1^{(N-n)}(N; d_1,\cdots, d_r)}$,
which is $q^{\mu_1(N; d_1,\cdots, d_r)}$, if $r=N-n$.
\end{proof}

We close this section by proving the following very elementary lemma,
which summarizes some basic properties of
the numbers \(\nu_j^{(e)}(N;d_1,\ldots,d_r)\).
These properties will be used in our proof.

\begin{lemma}%
\label{lemma:properties-of-nu}
The numbers \(\nu_j^{(e)}(N;d_1,\ldots,d_r)\) satisfy the following
properties.
\begin{enumerate}
\item \label{property-1}  \(\nu^{(r)}_j(N;d_1,\ldots,d_r) = \mu_j(N;d_1,\ldots,d_r)\).
\item \label{property-2} \(\nu_{j}^{(e)}(N;d_1,\ldots,d_r)\) is decreasing in \(e\) and \(d_i\),
increasing in \(j\).
\item \label{property-3} For any \(e \in \mathbb{Z}\), \(i \geq 1\),
\(j \geq 0\), we have
\[\nu^{(e)}_{j+i}(N;d_1,\ldots,d_r) \geq \nu_{j}^{(e-i)}(N;d_1,\ldots,d_r).\]
\item \label{property-4} Suppose \(1 \leq e\leq r_1 \leq r-1\), then
\[\nu_{j+(r - r_1 - 1)}^{(e)}(N;d_1,\ldots,d_{r_1}) \geq \nu_{j}^{(e)}(N;d_1,\ldots,d_{r}).\]
\end{enumerate}
\end{lemma}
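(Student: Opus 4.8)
The plan is to verify each of the four properties by unwinding the definition
\[
\nu_j^{(e)}(N;d_1,\ldots,d_r) = j + \max\!\left\{0,\ \left\lceil \frac{N-j-\sum_{i=1}^{r}d_i^{\ast}(e)}{d_1}\right\rceil\right\},
\]
and exploiting monotonicity of the quantity $S(e) := \sum_{i=1}^r d_i^\ast(e)$ in $e$, of the ceiling function, and of the map $x\mapsto \max\{0,\lceil x/d_1\rceil\}$. For \eqref{property-1}, I would simply note that $d_i^\ast(r) = d_i$ for all $i$ (since every index satisfies $i\le r$), so $S(r)=\sum d_i$ and the defining formula collapses verbatim to that of $\mu_j$ in Definition~\ref{definition:mu}. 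For \eqref{property-2}, monotonicity in $e$ was already observed immediately after Definition~\ref{definition:nu} (each $d_i^\ast(e)$ is nondecreasing in $e$, so $S(e)$ increases, the numerator decreases, and $\nu$ decreases); monotonicity in each $d_i$ is the same argument applied to $S$ (increasing $d_i$ can only increase $d_i^\ast(e)$, and also the case $i=1$ is handled since increasing $d_1$ both increases $S$ and the denominator, both pushing $\nu$ down—here one must check the claim is that $\nu$ is \emph{decreasing} in $d_i$, consistent with the displayed chain ending at $\mu_j$); monotonicity in $j$ follows from the identity $\nu_j^{(e)}(N;\underline d) = j + \nu_0^{(e)}(N-j;\underline d)$ together with the elementary fact that $j\mapsto j + \max\{0,\lceil (c-j)/d_1\rceil\}$ is nondecreasing (each unit increase of $j$ drops the ceiling term by at most $1$), exactly as recorded for $\mu_j$ in Definition~\ref{definition:mu}.

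For \eqref{property-3}, I would use $\nu_{j+i}^{(e)} - \nu_{j}^{(e-i)}$ and split into cases according to whether the $\max$ is attained at $0$ on either side. Writing $A = N - (j+i) - S(e)$ and $B = N - j - S(e-i)$, the goal is $i + \max\{0,\lceil A/d_1\rceil\} \ge \max\{0,\lceil B/d_1\rceil\}$. The key inequality is $B - A = i + (S(e) - S(e-i))$, and since $S(e)-S(e-i) = \sum_{t=e-i+1}^{e}(d_t^\ast(e) - d_t^\ast(e-i)) \le i\cdot d_1$ (each term is at most $d_1$, being the jump from $0$ or $1$ up to some $d_t\le d_1$), we get $B \le A + i(1+d_1)$, hence $\lceil B/d_1\rceil \le \lceil A/d_1\rceil + i + \lceil i/d_1\rceil$—wait, more carefully $\lceil B/d_1\rceil \le \lceil A/d_1\rceil + i + i = \lceil A/d_1 \rceil + 2i$ is too weak; the right bound to chase is $\lceil B/d_1\rceil \le \lceil A/d_1\rceil + i$, which follows from $B \le A + i\cdot d_1$, i.e.\ from $S(e)-S(e-i) \le i(d_1-1)$. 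This refined bound holds because when $i \le d_1$, wait—here is the genuine subtlety: the jump $d_t^\ast(e)-d_t^\ast(e-i)$ can be as large as $d_t$ when $d_t = d_1$, namely $d_1 - 1$ (from $1$ to $d_1$) or $d_1$ (from $0$ to $d_1$) only when $d_t<d_1$, which is impossible. So each jump is $\le d_1 - 1$ when $d_t=d_1$ and $\le d_t \le d_1-1$ when $d_t<d_1$; in all cases the jump is $\le d_1 - 1$, giving $S(e)-S(e-i)\le i(d_1-1)$, hence $B \le A + i\cdot d_1$ and $\lceil B/d_1\rceil \le \lceil A/d_1\rceil + i$. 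Combining with the trivial $\max\{0,\cdot\}$ bookkeeping (if $\lceil B/d_1\rceil \le 0$ the inequality is immediate since $i\ge 1 > \max\{0,\lceil A/d_1\rceil\}$ fails—no, one uses $i + \max\{0,\lceil A/d_1\rceil\} \ge i \ge 1 \ge \max\{0,0\}$... I will handle the sign cases cleanly) yields the claim.

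For \eqref{property-4}, which compares $\nu$ with $r_1$ equations (and a shifted index $j + (r - r_1 - 1)$) against $\nu$ with $r$ equations, I would again compute the difference of numerators. On the left the relevant sum is $S_1(e) = \sum_{i=1}^{r_1} d_i^\ast(e)$ computed with $r_1$ equations, while on the right it is $S(e) = \sum_{i=1}^{r} d_i^\ast(e)$ with $r$ equations; since $e \le r_1$, in both cases $d_i^\ast(e)=d_i$ for $i\le e$ and the tail terms $d_i^\ast(e)$ for $e < i \le r_1$ agree, so $S(e) - S_1(e) = \sum_{i=r_1+1}^{r} d_i^\ast(e)$, and because $e\le r_1 < i$ each such term is $d_i^\ast(e) \in \{0,1\}$, so $0 \le S(e)-S_1(e) \le r - r_1$. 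The numerators are then: left $= N - (j+r-r_1-1) - S_1(e)$ and right $= N - j - S(e)$; their difference is $(S(e)-S_1(e)) - (r-r_1-1) \ge 0 - (r-r_1-1) = -(r-r_1-1)$ and $\le (r-r_1) - (r-r_1-1) = 1$. So the left numerator exceeds the right by at least $-(r-r_1-1)$; dividing by $d_1$, taking ceilings, adding back the respective index shifts, and tracking the $\max\{0,\cdot\}$: the index shift $r-r_1-1$ on the left exactly compensates the worst-case loss $-(r-r_1-1)$ in the ceiling term (since the ceiling of the left numerator over $d_1$ is at least the ceiling of the right numerator over $d_1$ minus $(r-r_1-1)$, using $d_1\ge 1$), so $\nu_{j+(r-r_1-1)}^{(e)}(N;d_1,\ldots,d_{r_1}) \ge \nu_j^{(e)}(N;d_1,\ldots,d_r)$ as desired; the cases where either $\max$ is $0$ only help.

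The main obstacle is bookkeeping discipline: all four parts hinge on the single estimate that each jump $d_i^\ast(e)-d_i^\ast(e')$ (for $e>e'$) or each ``extra'' tail term $d_i^\ast(e)$ with $i>e$ is controlled—by $d_1-1$ in one case and by $1$ in the other—and on the elementary monotonicity of $x\mapsto x+\max\{0,\lceil (c-x)/d_1\rceil\}$ together with $\lceil (u+v)/d_1\rceil \le \lceil u/d_1\rceil + \lceil v/d_1\rceil$ and $\lceil v \cdot d_1 / d_1\rceil = v$. I expect \eqref{property-3} to be the fiddliest, because it is the one where the index shift ($-i$ versus $+i$) must be matched against a jump bound ($d_1 - 1$ per step) that is \emph{not} simply $d_1$, so the slack is tight and the case analysis on the two $\max$ operations cannot be skipped; once that is done, \eqref{property-4} is a variant with an easier jump bound, and \eqref{property-1}, \eqref{property-2} are essentially immediate from the definition and the remarks already in the text.
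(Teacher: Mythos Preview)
Your argument is essentially the paper's. For (\ref{property-1}) and (\ref{property-2}) the paper simply says ``straightforward and omitted''; for (\ref{property-3}) it reduces to $i=1$ and then splits into the three ranges $1\le e\le r$, $e\ge r+1$, $e\le 0$, whereas you handle all $i$ and all $e$ uniformly via the single jump bound $S(e)-S(e-i)\le i(d_1-1)$. That bound is exactly what the paper's main-case computation establishes term by term (each $d_t^\ast(e)-d_t^\ast(e-1)\le d_1-1$, since the jump is $d_1-1$ when $d_t=d_1$ and $d_t\le d_1-1$ when $d_t<d_1$), so the streamlining is purely presentational. Your treatment of (\ref{property-4}) is the paper's computation rephrased.

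One real gap: your justification of ``decreasing in $d_1$'' in (\ref{property-2}) is wrong. You assert that increasing $d_1$ increases $S(e)$, but when $d_1$ rises past some $d_t$ with $t>e$, the term $d_t^\ast(e)$ drops from $1$ to $0$, so $S(e)$ can fall. In fact the monotonicity claim itself fails for $d_1$: one computes $\nu_0^{(1)}(4;2,2,2)=0$ but $\nu_0^{(1)}(4;3,2,2)=1$. The paper omits the proof of (\ref{property-2}) and so never confronts this; the only place ``decreasing in $d_i$'' is invoked (the opening paragraph of the proof of Theorem~\ref{theorem:beyond}) can be sidestepped by simply keeping the original $d_i$ as degree bounds for the new polynomials $g_i$ supplied by Lemma~\ref{lemma:key}, so nothing downstream is damaged. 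But you should not claim to have established monotonicity in $d_1$.
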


\begin{proof}
The proofs of (\ref{property-1}) and (\ref{property-2}) are
straightforward and omitted.  Clearly, it suffices to prove
(\ref{property-3}) for \(i = 1\).  We first assume
\(1 \leq e \leq r\).  In this case, we have following estimate:
\begin{align*}
  \nu_{j+1}^{(e)}(N;d_1,\ldots,d_r)
  &= j + 1 + \max\left\{ 0, \left\lceil  \frac{N - j -1- \sum_i d_i^{\ast}(e)}{d_1}\right \rceil \right\} \\
  &= j + \max \left\{ 1, \left\lceil \frac{N-j + (d_1 - d_e) - \sum_{i\neq e} d_i^{\ast}(e) - 1}{d_1} \right \rceil \right\} \\
  &\geq j + \max \left\{ 1, \left\lceil \frac{N-j - \sum_i d_i^{\ast}(e-1) }{d_1}\right \rceil \right\} \\
  & \geq \nu_j^{(e-1)}(N;d_1,\ldots,d_r).
\end{align*}

If \(e \geq r+1\), then \(d_{i}(e) = d_{i} = d_{i}(e-1)\), the
property reduces to the inequality
\begin{equation*}
\max\left\{ 0, \frac{N-j-1-\sum_{1}^{r} d_{\nu}}{d_{1}} \right\} + j + 1
\geq
j + \max\left\{ 0, \frac{N-j-\sum_{1}^{r} d_{\nu}}{d_{1}}\right\},
\end{equation*}
which is clearly valid since \(d_{1}\geq 1\).
If \(e \leq 0\), then the property reduces to
\begin{equation*}
\max\left\{
0, \frac{N-j-1- M \cdot d_{1}}{d_{1}}
\right\} + j +1
\geq
\max\left\{
0, \frac{N-j-M\cdot d_{1}}{d_{1}}
\right\}+j
\end{equation*}
where \(M = \operatorname{Card}\{\nu: d_{\nu} = d_{1}\}\).
This is also valid since \(d_{1} \geq 1\).

To prove (\ref{property-4}), we simply compute:
\begin{align*}
  {}& \nu_{j+(r-r_1-1)}^{(e)}(N;d_1,\ldots,d_{r_1}) \\
  = {}& j + (r-r_1-1) + \max\left\{0, \left\lceil \frac{N-j-(r-r_1-1) -\sum_{i=1}^{r_1}d_{i}^{\ast}(e)}{d_1}\right \rceil\right\} \\
  \geq {} & j + \max\left\{ 0, \left\lceil \frac{ N-j + (r-r_1-1)(d_1-1) + \sum_{i=r_1+1}^{r}d^{\ast}_{i}(e) - \sum_{i=1}^{r}d_{i}^{\ast}(e)}{d_1}\right \rceil \right\} \\
  \geq {} & \nu_{j}^{(e)}(N;d_1,\ldots,d_{r})
\end{align*}
The same proof shows that for integer $i\geq 0$ and  \(1 \leq e\leq r_1 \leq r-i\), then
\[\nu_{j+(r - r_1 - i)}^{(e)}(N;d_1,\ldots,d_{r_1}) \geq \nu_{j}^{(e)}(N;d_1,\ldots,d_{r}).\]
But we will only need the case $i=1$ as stated in the lemma.
This completes the proof.
\end{proof}

\subsection*{Conventions}

In the following, \(k\) will either be an algebraic closure finite
field, or the field \(\mathbb{C}\) of complex numbers.  We fix a prime \(\ell\) different from the characteristic of \(k\).

If \(k\) is an algebraic closure of a finite field, and if \(K\) is a
constructible \(\mathbb{Q}_{\ell}\)-complex on a variety \(X\)
(``arithmetic situation''), we will assume that \(K\) is equipped with
a Frobenius action (a complex of ``Weil sheaves'').  This is the case
when \(X = X_{0} \otimes_{\mathbb{F}_{q}} k\) is defined over some
finite subfield \(\mathbb{F}_{q}\), and \(K\) is the inverse image of
some \(\mathbb{Q}_{\ell}\)-complex on \(X_{0}\).



If \(k=\mathbb{C}\) (``Betti situation''), and if \(K\) is a
constructible complex with \(\mathbb{Q}\)-coefficient on
\(X^{\mathrm{an}}\), then we use the symbol
\(\mathrm{H}^{\ast}_{c}(X;K)\) and \(\mathrm{H}^{\ast}(X;K)\)
to denote the cohomology of the complex \(K\) on the analytic space
\(X^{\mathrm{an}}\), with or without compact supports.  We will assume
that \(K\) underlies a mixed Hodge module, so that
\(\mathrm{H}_{c}^{\ast}(X;K)\) and \(\mathrm{H}^{\ast}(X;K)\) are a
mixed Hodge structures.

In either case, the constant sheaf \(\mathbb{Q}_{\ell}\) or
\(\mathbb{Q}\) will be denote by \(\Lambda\), and \(\Lambda(m)\)
denotes the \(m\)\textsuperscript{th} Tate twist.  When we write
\(\mathrm{H}^{i}_{(c)}(X)\), we mean
\(\mathrm{H}^{i}_{(c)}(X;\Lambda)\).

We shall construct and use several maps between cohomology groups in
the main text.  They all come from the standard 6-operations.
Therefore, in the arithmetic situation, these maps are easily shown to
be compatible with the Frobenius actions, once we choose a field of
definition of the variety and the complexes; in the Betti situation,
these maps are maps of Hodge structures.

\section{Generic base change, weak Lefschetz, and a Gysin lemma}

The purpose of this section is to deduce, from Deligne's generic base
change theorem~\ref{theorem:deligne} and weak Lefschetz
theorem~\ref{theorem:weak-lefschetz}, a ``Gysin
lemma''~\ref{lemma:gysin}. This lemma will be the foundation of the
remainder of this paper.

First, recall Deligne's ``generic base change theorem''
(\cite[Finitude, Corollary 2.9]{deligne:sga4.5}).

\begin{theorem}[Deligne]
\label{theorem:generic-base-change}
Let \(S\) be a finite type, separated scheme over \(k\).
Let \(f\colon X \to Y\) be a morphism of finite type, separated \(S\)-schemes.
Let \(K\) be a constructible complex on either \(Y\) or \(X\).
Then there exists a dense open subscheme \(U\) of \(S\), such
that \(Rf_{\ast}K\), \(Rf_{!}K\), \(f^{-1}K\), \(Rf^{!}K\) are all constructible complexes
on \(X\) or \(Y\), and the formation is compatible with
all base change \(S^{\prime} \to U \subset S\).
\end{theorem}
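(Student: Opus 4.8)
The plan is to follow Deligne's argument in \cite[Finitude]{deligne:sga4.5}. The four functors fall into a trivial pair and a substantive pair. The pullback $f^{-1}$ preserves constructibility and commutes with \emph{arbitrary} base change with no hypothesis on $f$, so nothing is demanded of $U$ for it. For $Rf_{!}$: since $X$ and $Y$ are of finite type over the field $k$, the complex $Rf_{!}K$ is constructible on $Y$ unconditionally, and proper base change shows that $Rf_{!}$ commutes with \emph{arbitrary} base change $S'\to S$; so again $U=S$ works. Hence the genuine content concerns $Rf_{*}$, and then $Rf^{!}$ will follow by duality.

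For $Rf_{*}$, factor $f=\bar f\circ j$ by Nagata compactification, with $j\colon X\hookrightarrow\bar X$ an open immersion and $\bar f\colon\bar X\to Y$ proper over $S$. Since $R\bar f_{*}=R\bar f_{!}$ preserves constructibility and commutes with arbitrary base change, and $Rf_{*}K\cong R\bar f_{*}(Rj_{*}K)$, it suffices to produce a dense open $U\subseteq S$ over which the formation of $Rj_{*}K$ commutes with base change. One now runs noetherian induction on $S$ together with a dévissage of $\bar X$: shrinking $S$ and stratifying, then using a dévissage to relative curves (elementary fibrations à la Artin, Noether normalization) and induction on the relative dimension, one reduces to the case of a smooth affine relative curve with $K$ lisse on the relevant open. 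There the higher direct images $R^{q}j_{*}K$ are computed classically (Grothendieck; the Euler--Poincar\'{e} formula; the behaviour of the Swan conductor), and one reads off that over a dense open of the base the formation of $Rj_{*}K$ commutes with base change --- equivalently, $j$ is universally locally acyclic relative to $K$ there. Composing with proper base change for $R\bar f_{*}$ gives the assertion for $Rf_{*}$ over this $U$.

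Finally, $Rf^{!}$ follows from $f^{-1}$ by Verdier duality. Shrink $U$ so that $X$ and $Y$ are smooth over $U$; then the relative dualizing complexes are invertible and the duality functors $D_{X/U}$, $D_{Y/U}$ commute with base change $S'\to U$. From $Rf^{!}K\cong D_{X/U}\,f^{-1}D_{Y/U}K$, both the base change for $Rf^{!}$ and the constructibility of $Rf^{!}K$ reduce to the facts already established for $f^{-1}$ and for duality. Intersecting the dense opens obtained for the four functors yields the required $U$. The main obstacle is the generic local acyclicity used in the $Rf_{*}$ step --- the assertion that an \emph{arbitrary} morphism becomes universally locally acyclic relative to $K$ after shrinking the base; this is the one place where genuine geometry (reduction to curves, structure of vanishing cycles) is needed, everything else --- Nagata compactification, proper base change, Verdier duality, and the noetherian induction over stratifications --- being formal bookkeeping. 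In the Betti setting the same architecture applies, with local acyclicity replaced by Thom--Mather generic local triviality: every morphism of complex algebraic varieties can be stratified so as to be a locally trivial topological fibration over each stratum, compatibly with a stratification of $S$.
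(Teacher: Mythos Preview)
The paper does not prove this theorem at all: it simply recalls the statement and cites \cite[Finitude, Corollary~2.9]{deligne:sga4.5}. Your proposal is therefore not to be compared against a proof in the paper but evaluated as a sketch of Deligne's argument.

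Most of your outline is sound: the triviality of $f^{-1}$ and $Rf_{!}$, the reduction of $Rf_{*}$ to $Rj_{*}$ via Nagata and proper base change, and the indication that generic local acyclicity (reduced to relative curves) is the substantive input --- these are all correct and match the architecture in \cite{deligne:sga4.5}.

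There is, however, a genuine gap in your treatment of $Rf^{!}$. You write ``Shrink $U$ so that $X$ and $Y$ are smooth over $U$.'' This is false in general: $X$ and $Y$ are arbitrary finite-type $S$-schemes and need not be smooth over \emph{any} nonempty open of $S$ (e.g.\ a constant family of nodal curves over $S=\mathbb{A}^{1}$). Shrinking the base cannot repair fibrewise singularities. Consequently your appeal to invertible relative dualizing complexes and to $D_{X/U}$, $D_{Y/U}$ commuting with base change is unjustified as stated.

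The standard fix does not go through smoothness of $X$ and $Y$ themselves. One factors $f$ as a closed immersion $i\colon X\hookrightarrow Z$ followed by a smooth morphism $g\colon Z\to Y$ (e.g.\ $Z=\mathbb{A}^{n}_{Y}$). Then $Rg^{!}=g^{*}(d)[2d]$ commutes with arbitrary base change, and for the closed immersion one uses the distinguished triangle
\[
i_{*}Ri^{!}L \to L \to Rj_{*}j^{*}L \to
\]
(with $j$ the complementary open immersion) to reduce base change for $Ri^{!}$ to the already-established generic base change for $Rj_{*}$. Alternatively, one may stratify $X$ and $Y$ over a dense open of $S$ into locally closed pieces smooth over the base and run a d\'evissage --- but this is substantially more than ``shrink $U$''. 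Either way, the $Rf^{!}$ step feeds back into the $Rf_{*}$ result rather than standing on an independent smoothness hypothesis.
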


From the generic base change theorem we deduce the following ``generic purity lemma''.

\begin{lemma}
\label{lemma:generic-purity}
Let \(f\colon X \to \mathbb{P}^{N}\) be a quasi-finite morphism.
Let \(K\) be a constructible complex of \(\Lambda\)-modules on $X$.
For a hyperplane \(B\),
form the fiber diagram
\begin{equation*}
\begin{tikzcd}
X\times_{\mathbb{P}^{N}} B \ar[r,"\iota^{\prime}_B"] \ar[d,"f^{\prime}"] & X \ar[d,"f"] \\
B \ar[r,"\iota_B"] & \mathbb{P}^N
\end{tikzcd}.
\end{equation*}
If \(B\) is sufficiently general,
then the purity morphism
\(K|_{X\times_{\mathbb{P}^{N}} B}[-2](-1) \xrightarrow{\sim} R(\iota_{B}^{\prime})^{!}K\)
is an isomorphism.
\end{lemma}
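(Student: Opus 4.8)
The plan is to spread the situation out over the dual projective space and apply Deligne's generic base change theorem~\ref{theorem:generic-base-change}, then separately establish the purity isomorphism at a single sufficiently general hyperplane by a local computation. First I would set $S = (\mathbb{P}^N)^{\vee}$, the space of hyperplanes, and consider the universal hyperplane $\mathcal{B} \subset \mathbb{P}^N \times S$ together with its preimage $\mathcal{X}_{\mathcal{B}} := X \times_{\mathbb{P}^N} \mathcal{B} \subset X \times S$. Writing $j\colon \mathcal{X}_{\mathcal{B}} \hookrightarrow X \times S$ for the closed immersion and $p\colon X\times S \to S$ for the projection, we apply Theorem~\ref{theorem:generic-base-change} to the complex $K \boxtimes \Lambda$ on $X \times S$ (or rather to its pullback), concluding that there is a dense open $U \subseteq S$ over which the formation of $Rj^{!}(K\boxtimes\Lambda)$ commutes with all base change $S' \to U$. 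In particular, for every hyperplane $B$ corresponding to a point of $U$, we get $R(\iota'_B)^{!}K \cong (Rj^{!}(K\boxtimes\Lambda))|_{\text{fiber over }B}$, so the question of whether the purity morphism is an isomorphism is insensitive to replacing $B$ by any other member of $U$.

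Next I would produce the purity isomorphism itself. Since $f$ is quasi-finite, étale-locally (or after stratifying $X$ so that $f$ is finite over each stratum of a suitable stratification of $\mathbb{P}^N$) the map $f$ looks like a disjoint union of finite maps, and a hyperplane that is transverse to all the finitely many strata of $X$ (and to the images of their components under $f$) will have the property that $X \times_{\mathbb{P}^N} B$ meets each stratum in a smooth divisor of the expected codimension one. For such a $B$, the closed immersion $\iota'_B\colon X\times_{\mathbb{P}^N}B \hookrightarrow X$ is, stratum by stratum, a regular immersion of codimension one between smooth schemes, so the absolute purity theorem (Gabber's theorem in the $\ell$-adic case; the analogous statement for mixed Hodge modules in the Betti case) gives $R(\iota'_B)^{!}\Lambda \cong \Lambda[-2](-1)$ on each stratum, and hence the canonical Gysin morphism $K|_{X\times_{\mathbb{P}^N}B}[-2](-1) \to R(\iota'_B)^{!}K$ is an isomorphism because it is so after restricting to every stratum. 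Combining this with the previous paragraph: the set of $B$ for which the purity morphism is an isomorphism is both (i) dense (it contains all hyperplanes transverse to a fixed finite stratification) and (ii) open — or at least contains the dense open $U$ on which the whole picture is base-change compatible — so "$B$ sufficiently general" suffices.

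The main obstacle I expect is the bookkeeping in the second paragraph: making "transverse to a stratification" precise and checking that on each stratum the relevant immersion really is a codimension-one regular immersion between regular schemes, so that absolute purity applies uniformly. One has to be slightly careful because $K$ is an arbitrary constructible complex, not a local system, so the cleanest route is to fix a stratification of $\mathbb{P}^N$ adapted to both the cohomology sheaves of $Rf_! K$ and the images of the strata of $X$, use generic transversality of a general hyperplane to an algebraic stratification (Kleiman–Bertini), and then deduce the Gysin isomorphism strata-wise as above. In the Hodge-module setting one additionally notes that the Gysin morphism is a morphism in the derived category of mixed Hodge modules and being an isomorphism is checked on the underlying perverse sheaves, reducing to the $\ell$-adic (or constructible) computation. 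Everything else is a formal consequence of generic base change.
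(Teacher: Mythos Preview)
Your overall architecture---spread the family out over the dual projective space and invoke generic base change---matches the paper's. The divergence, and the gap, is in how you establish the purity isomorphism itself.

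The step ``the canonical Gysin morphism \ldots\ is an isomorphism because it is so after restricting to every stratum'' is not justified as written. Knowing that for each smooth stratum \(S_\alpha\) the inclusion \(S_\alpha\cap f^{-1}(B)\hookrightarrow S_\alpha\) satisfies absolute purity for the constant sheaf does \emph{not} formally imply the Gysin map for \(K\) on \(X\) is an isomorphism: the functor \(R(\iota'_B)^!\) does not localize to strata in the way you need. If \(j\colon S_\alpha\hookrightarrow X\) is the inclusion of a locally closed stratum, there is in general no identification between \((R(\iota'_B)^!K)|_{S_\alpha\cap f^{-1}(B)}\) and \(R(\iota'_B|_{S_\alpha})^!(K|_{S_\alpha})\); in particular \(R(\iota'_B)^!\) applied to a \(j_!\)-extension does not commute with \(j_!\) without further hypotheses. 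What you actually need is a non-characteristic statement (transversality of \(B\) to the singular support of \(Rf_!K\)), or a genuine d\'evissage by recollement triangles using that, for a \emph{Whitney} stratification, \(Rj_\ast\) of a lisse sheaf on an open stratum stays constructible for the same stratification. Either route can be made to work, but it is a real argument, not the bookkeeping you flag in your third paragraph; your proposed fix (Kleiman--Bertini plus strata-wise absolute purity) still does not address the gluing.

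The paper sidesteps this entirely. It first reduces to \(f\) finite (factoring quasi-finite as open immersion followed by finite), pushes \(K\) forward to \(Rf_\ast K\) on \(\mathbb{P}^N\), and then uses that the incidence variety \(\mathcal{A}\subset\mathbb{P}^N\times\check{\mathbb{P}}^N\) has \emph{smooth} projection to \(\mathbb{P}^N\). Since \(\mathrm{pr}_1^{-1}(Rf_\ast K)\) is pulled back along a smooth equidimensional map and \(\mathcal{A}\to\mathbb{P}^N\) is likewise smooth equidimensional, \emph{relative} purity (for a smooth pair, not Gabber's absolute purity) yields \(\iota^{-1}L\simeq R\iota^!L[2](1)\) universally over \(\check{\mathbb{P}}^N\). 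Generic base change then specializes this to a single general \(B\), and proper base change together with Verdier duality (exploiting finiteness of \(f\) to identify stalks of a direct image with direct sums over fibers) transfers the isomorphism back from \(\mathbb{P}^N\) to \(X\). The payoff is a proof that never touches stratifications, transversality, or absolute purity---only the elementary smooth-pair case.
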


\begin{proof}
First, assume that \(f\) is a finite morphism.  Let \(\mathcal{M}\) be
the space of hyperplanes in \(\mathbb{P}^N\) (i.e., \(\mathcal{M}\) is
the dual projective space \(\check{\mathbb{P}}^N\)).  Form the
incidence correspondence
\(\mathcal{A} = \{(x,[B]) \in \mathbb{P}^N \times \mathcal{M}: x \in
B\}\).  Let
\(\iota\colon \mathcal{A} \to \mathbb{P}^N_{\mathcal{M}} =
\mathbb{P}^N \times \mathcal{M}\) be the closed immersion.  Then the
projections
\(\mathrm{pr}_1\colon\mathbb{P}^N_{\mathcal{M}} \to \mathbb{P}^N\),
and \(\mathrm{pr}_{1}\circ\iota\colon\mathcal{A} \to \mathbb{P}^N\)
are all smooth, equidimensional morphisms. Let
\(L = \mathrm{pr}_{1}^{-1}(Rf_{\ast}K)\).  Since \(L\) is the inverse
image of a complex by a smooth, equidimensional morphism, and since
\(\mathrm{pr}_{1} \circ \iota\) is also smooth and equidimensional,
the relative purity theorem implies that the purity morphism is an
isomorphism
\begin{equation}
  \label{eq:purity-iso}
  \iota^{-1}L \xrightarrow{\sim} R\iota^{!}L[2](1).
\end{equation}
See, for example,
\cite[Theorem~11.2, Supplement]{kiehl-weissauer:weil-conjecture-perverse-sheaf-fourier-transform}.

By Theorem \ref{theorem:generic-base-change},
there is a Zariski
dense open subset \(\mathcal{U}\) of \(\mathcal{M}\) such that the formation of \(R\iota^{!}L\) commutes with arbitrary
base change \(S \to \mathcal{U}\). In particular, for any \([B] \in \mathcal{U}\),
we have
\begin{equation}\label{eq:generic-base-change-special-case}
  (R\iota^{!}L)|_{B} \simeq R\iota_B^{!}(L|_{\mathbb{P}^{N}\times\{[B]\}}).
\end{equation}
Here we are using the following base change diagram
\begin{equation*}
  \begin{tikzcd}
    B \ar[r,"\iota_B"] \ar[hook,d] & \mathbb{P}^N\times\{[B]\} \ar[r] \ar[hook,d] & \{[B]\} \ar[hook,d] \\
    \mathcal{A} \ar[r,"\iota"] & \mathbb{P}^N_{\mathcal{M}} \ar[r,"\mathrm{pr}_{2}"] & \mathcal{M}
  \end{tikzcd}.
\end{equation*}

Then \eqref{eq:generic-base-change-special-case} implies that
\((R\iota^{!}L)|_{B} \simeq R\iota_{B}^{!}Rf_{\ast}K\).
Coupled with the purity isomorphism \eqref{eq:purity-iso}, we get
\begin{equation*}
  \iota_{B}^{-1}Rf_{\ast}K[-2](-1) \simeq R\iota_{B}^{!} Rf_{\ast}K.
\end{equation*}
Now we turn to the following fiber diagram:
\begin{equation*}
  \begin{tikzcd}
    B \times_{\mathbb{P}^{N}} X \ar[d,"f^{\prime}"] \ar[r,"\iota^{\prime}_B"] & X \ar[d,"f"] \\
    B \ar[r,"\iota_B"] & \mathbb{P}^N
  \end{tikzcd}.
\end{equation*}
By proper base change, we see \(\iota_{B}^{-1}Rf_{\ast}K \simeq Rf^{\prime}_{\ast}(\iota^{\prime}_{B})^{-1}K\).
Applying Verdier duality to the above isomorphism gives
\(R\iota_B^{!} Rf_{!} \mathbb{D}K \simeq Rf^{\prime}_{!}R(\iota^{\prime}_{B})^{!}\mathbb{D}K\).
Here \(\mathbb{D}K\) is the Verdier dual of \(K\).
Replacing \(K\) by \(\mathbb{D}K\) in this latter isomorphism,
using \(Rf_{!} = Rf_{\ast}\) (as \(f\) is proper)
and \(Rf^{\prime}_{!} = Rf^{\prime}_{\ast}\), we conclude that
\begin{equation}\label{eq:generic-pre-purity}
  Rf^{\prime}_{\ast}(\iota^{\prime}_{B})^{-1}K[-2](-1) \simeq Rf^{\prime}_{\ast}R(\iota^{\prime}_{B})^{!}K.
\end{equation}
Since \(f\) is a finite morphism, so is \(f^{\prime}\).  In this situation, we
have \(Rf^{\prime}_{\ast} =f^{\prime}_{\ast}\).  For any closed point \(b\) of \(B\),
the stalks of both sides of \eqref{eq:generic-pre-purity} at \(b\) are
the direct sums of the stalks of \((\iota^{\prime}_{B})^{-1}K[-2](-1)\) and
\(R(\iota^{\prime}_{B})^{!}K\) at the points in \(f^{-1}(b)\).  Since the
purity map induces an isomorphism on the direct sum of these stalks,
the purity map itself is necessarily an isomorphism.
Therefore, \eqref{eq:generic-pre-purity} implies that
\(K|_{B \times_{\mathbb{P}^{N}} X}[-2](-1) \simeq R(\iota^{\prime}_{B})^{!}K\).
This finishes the proof when \(f\) is a finite morphism.

Now assume \(f\) is a quasi-finite morphism.
Then we can factor \(f\) into a composition
\(X \xrightarrow{j} \overline{X} \xrightarrow{g} \mathbb{P}^{N}\) in which
\(j\) is an open immersion, \(g\) is a finite morphism.
Thus we have the following fiber diagram
\begin{equation*}
  \begin{tikzcd}
    X\times_{\mathbb{P}^{N}} B \ar[d,"j^{\prime}"] \ar[r,"\iota^{\prime}_B"] & X \ar[d,"j"]  \\
    \overline{X} \times_{\mathbb{P}^{N}} B \ar[d,"g^{\prime}"] \ar[r,"\iota^{\prime\prime}_B"] & \overline{X} \ar[d,"g"] \\
    B \ar[r,"\iota_B"] & \mathbb{P}^N
  \end{tikzcd}.
\end{equation*}
In the diagram, \(f^{\prime} = g^{\prime} \circ j^{\prime}\).
By the previous paragraph, for a sufficiently
general \(B\), we have
\[
(j_{\ast}K)|_{\overline{X} \times_{\mathbb{P}^{N}} B}[-2](-1)\simeq R(\iota_{B}^{\prime\prime})^{!}(j_{\ast}K).
\]
Applying the functor \((j^{\prime})^{-1} = R(j^{\prime})^{!}\) to the above
isomorphism gives the desired isomorphism
\begin{equation*}
K|_{X\times_{\mathbb{P}^{N}} B} [-2](-1) \simeq R(\iota_{B}^{\prime})^{!}K.
\end{equation*}
This completes the proof.
\end{proof}

Next, let us recall Deligne's weak Lefschetz theorem for perverse sheaves
\cite[Corollary~A.5]{katz:affine-cohomological-transforms-perversity-monodromy}.

\begin{theorem}[Deligne]
\label{theorem:weak-lefschetz}
Let \(f\colon X \to \mathbb{P}^{N}\) be a quasi-finite morphism to a projective space.
Let \(K\) be a perverse sheaf on \(X\). Then for a sufficiently general
hyperplane \(B\), the restriction morphism
\begin{equation*}
\mathrm{H}^i(X;K) \to \mathrm{H}^i(f^{-1}B; K|_{f^{-1}B})
\end{equation*}
is injective if \(i = -1\), and bijective if \(i < -1\).
\end{theorem}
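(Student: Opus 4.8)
The plan is to push everything down to $\mathbb{P}^N$ and reduce to ordinary weak Lefschetz for a complex lying in nonnegative perverse degrees on projective space, together with Artin's affine vanishing theorem. Write $L = Rf_*K$, a constructible complex on $\mathbb{P}^N$ with $R\Gamma(\mathbb{P}^N;L)=R\Gamma(X;K)$. The first observation is that quasi-finiteness of $f$ forces $L\in {}^{p}D^{\geq 0}(\mathbb{P}^N)$: since $f$ has zero-dimensional fibres, $Rf_!$ is right $t$-exact for the perverse $t$-structures, equivalently $Rf_*$ is left $t$-exact, so it carries the perverse sheaf $K$ into ${}^{p}D^{\geq 0}$. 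This is the only place the hypothesis on $f$ is used, and it is indispensable — for $f$ with positive-dimensional fibres $Rf_*$ raises perverse degree.

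Next I would identify the restriction map $\mathrm{H}^i(X;K)\to\mathrm{H}^i(f^{-1}B;K|_{f^{-1}B})$ with the restriction map $\mathrm{H}^i(\mathbb{P}^N;L)\to\mathrm{H}^i(B;L|_B)$ for generic $B$. To this end, let $S=\check{\mathbb{P}}^N$ parametrize hyperplanes, let $\mathcal{B}\subset\mathbb{P}^N\times S$ be the incidence variety (a $\mathbb{P}^{N-1}$-bundle over $\mathbb{P}^N$ under the first projection, whose fibre over $[B]$ is $B$), set $\mathcal{Z}=X\times_{\mathbb{P}^N}\mathcal{B}$, and let $\widetilde K$ be the pullback of $K$ to $\mathcal{Z}$, $\widetilde f\colon\mathcal{Z}\to\mathcal{B}$. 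Smooth base change along $\mathcal{B}\to\mathbb{P}^N$ gives that $R\widetilde f_*\widetilde K$ is the pullback of $L$ along $\mathcal{B}\to\mathbb{P}^N$, while Deligne's generic base change theorem~\ref{theorem:generic-base-change} applied to $\widetilde f$ over $S$ yields a dense open $\mathcal{U}\subseteq S$ over which $R\widetilde f_*\widetilde K$ commutes with base change to the fibres. For $[B]\in\mathcal{U}$ this produces a canonical isomorphism $i_B^*L\simeq R(f_B)_*(K|_{f^{-1}B})$, compatible with restriction maps, where $f_B\colon f^{-1}B\to B$; taking $R\Gamma(B;-)$ gives the desired identification. (This is the source of ``sufficiently general $B$''; when $f$ is finite, proper base change does the same for every hyperplane.)

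It then remains to prove the statement for $L\in{}^{p}D^{\geq 0}(\mathbb{P}^N)$ and the identity map. Put $U=\mathbb{P}^N\setminus B$, an affine variety since $B$ is a hyperplane, with open immersion $j_U\colon U\hookrightarrow\mathbb{P}^N$. The excision triangle $j_{U!}j_U^*L\to L\to i_{B*}i_B^*L\xrightarrow{+1}$ shows, on applying $R\Gamma(\mathbb{P}^N;-)$, that $\mathrm{H}^i(\mathbb{P}^N;L)\to\mathrm{H}^i(B;L|_B)$ is bijective for $i<-1$ and injective for $i=-1$ once we know $\mathrm{H}^m(\mathbb{P}^N;j_{U!}j_U^*L)=0$ for $m<0$. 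Since $\mathbb{P}^N$ is proper, the left side equals $\mathrm{H}^m_c(\mathbb{P}^N;j_{U!}j_U^*L)=\mathrm{H}^m_c(U;j_U^*L)$; as an open immersion is perverse $t$-exact we have $j_U^*L\in{}^{p}D^{\geq 0}(U)$, and filtering it by its perverse cohomology sheaves ${}^{p}H^k$, $k\geq 0$, reduces the claim to $\mathrm{H}^m_c(U;P)=0$ for $m<0$ and $P$ perverse on the affine $U$, which is Artin's affine vanishing theorem in its compactly-supported, Verdier-dual form. This finishes the proof.

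I expect the main obstacle to be the second step: arranging the universal family of hyperplane sections and the genericity so that generic base change gives an isomorphism at the level of restriction maps, not merely of the cohomology groups of the target. Granting (i) the $t$-exactness input from quasi-finiteness and (ii) Artin's vanishing for perverse sheaves, the rest is formal; one could also bypass the family argument in step two by restricting directly on $X$ and invoking the generic purity Lemma~\ref{lemma:generic-purity}, but routing through $L$ on $\mathbb{P}^N$ keeps the argument shortest.
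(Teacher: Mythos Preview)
The paper does not give its own proof of this theorem; it is quoted as a known result of Deligne, with a reference to Katz's appendix. Your argument is correct and is essentially the standard one: push forward to \(\mathbb{P}^{N}\), use that a quasi-finite pushforward is left \(t\)-exact so \(L = Rf_{\ast}K \in {}^{p}D^{\geq 0}\), identify the two restriction maps via generic base change over the dual projective space, and finish with the excision triangle and Artin vanishing on the affine complement \(U = \mathbb{P}^{N}\setminus B\). One small wording point: ``equivalently \(Rf_{\ast}\) is left \(t\)-exact'' should read ``dually'' (it follows from the right \(t\)-exactness of \(Rf_{!}\) via Verdier duality, not as a logical equivalence), but the conclusion you need is correct.

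It is worth noting that your proof makes transparent the strengthening recorded in the paper's Remark after the theorem: the perversity of \(K\) is only used through \(K \in {}^{p}D^{\geq 0}(X)\) (the cosupport condition), since left \(t\)-exactness of \(Rf_{\ast}\) then still gives \(L \in {}^{p}D^{\geq 0}(\mathbb{P}^{N})\), and the rest of the argument goes through unchanged.
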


\begin{remark}%
  Going through the proof of Theorem~\ref{theorem:weak-lefschetz}, one
  finds that in the statement one may replace the perversity of \(K\)
  by \(K\) satisfying the \emph{cosupport condition}.  Recall that a
  constructible complex \(K\) is said to be \emph{semiperverse}, or to
  satisfy the \emph{support condition}, if
  \(\dim\operatorname{Supp}\mathcal{H}^{i}(K)\leq-i\).  In terms of
  the perverse \(t\)-structure, this means that \(K\) has vanishing
  positive perverse cohomology sheaves.  If \(\mathbb{D}K\) satisfies
  the support condition then we say \(K\) satisfies the
  \emph{cosupport condition}.
\end{remark}

\begin{lemma}%
\label{lemma:gysin-perv}
Let \(f\colon X \to \mathbb{P}^N\) be a quasi-finite morphism.  Let
\(K\) be a perverse sheaf on \(X\) (or more generally, \(K\) satisfies
the \emph{support condition}).  Then for a sufficiently general
hyperplane, and any \(i\), there exists a Gysin map
\begin{equation*}
\mathrm{H}^{i-2}_{c}(X \times_{\mathbb{P}^{N}} B;K|_{X\times_{\mathbb{P}^{N}} B}(-1)) \to \mathrm{H}^i_{c}(X;K),
\end{equation*}
which is surjective if \(i = 1\), and is bijective if \(i \geq 2\).
\end{lemma}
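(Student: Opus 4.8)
The plan is to dualize: the Gysin map into compactly supported cohomology of $X$ should be Verdier-dual to the restriction map on ordinary cohomology of the dual complex, so that Lemma~\ref{lemma:generic-purity} and Theorem~\ref{theorem:weak-lefschetz} combine to give what we want. Concretely, write $\iota'_B\colon X\times_{\mathbb{P}^N}B\hookrightarrow X$ for the closed immersion and $\jmath$ for the open immersion of the complement $U=X\setminus(X\times_{\mathbb{P}^N}B)$. First I would set up, for a sufficiently general $B$, the localization triangle
\begin{equation*}
R(\iota'_B)_{\ast}R(\iota'_B)^{!}K \to K \to R\jmath_{\ast}\jmath^{-1}K \xrightarrow{+1},
\end{equation*}
and apply $R\Gamma_c(X,-)$ to it; the connecting maps give, using the purity isomorphism $R(\iota'_B)^{!}K\simeq (\iota'_B)^{-1}K[-2](-1)$ from Lemma~\ref{lemma:generic-purity}, a map
\begin{equation*}
\mathrm{H}^{i-2}_c(X\times_{\mathbb{P}^N}B; K|_{X\times_{\mathbb{P}^N}B}(-1)) \to \mathrm{H}^i_c(X;K),
\end{equation*}
which is the candidate Gysin map. (Equivalently, one can build it from the counit $R(\iota'_B)_{!}R(\iota'_B)^{!}\to \mathrm{id}$ on the compact-support side, noting $R(\iota'_B)_{!}=R(\iota'_B)_{\ast}$ since $\iota'_B$ is a closed immersion.)

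Next I would identify the cokernel and kernel of this map via the long exact sequence of the triangle: the map $\mathrm{H}^{i-2}_c(X\times_{\mathbb{P}^N}B;K|(-1))\to\mathrm{H}^i_c(X;K)$ is surjective exactly when $\mathrm{H}^i_c(U;\jmath^{-1}K)\to\mathrm{H}^{i+1}_c(X\times_{\mathbb{P}^N}B;K|(-1))[\text{shift}]$ is injective — more precisely, surjectivity in degree $i$ and bijectivity in degrees $\geq i+1$ of the Gysin map is equivalent to the vanishing/injectivity of the relevant pieces of $\mathrm{H}^{\ast}_c(U;\jmath^{-1}K)$. So the whole lemma reduces to a statement of the shape: for $K$ satisfying the support condition, $\jmath^{-1}K$ on the \emph{affine} open $U = X\setminus(X\times_{\mathbb{P}^N}B)$ has vanishing compactly supported cohomology in low degrees. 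This is precisely the compact-support (Artin–Grothendieck) vanishing for affine morphisms applied to semiperverse complexes: if $K$ satisfies the support condition then so does $\jmath^{-1}K$, and for the quasi-finite (hence affine, after the factorization through a finite morphism as in the proof of Lemma~\ref{lemma:generic-purity}) situation over the affine variety $U\to\mathbb{A}^N$, one gets $\mathrm{H}^i_c(U;\jmath^{-1}K)=0$ for $i<0$ and the needed behavior at $i=0$.

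Rather than invoking affine vanishing directly, the cleaner route — and the one I expect the paper takes — is to dualize into Theorem~\ref{theorem:weak-lefschetz} itself. Apply Verdier duality: $\mathrm{H}^i_c(X;K)^{\vee}\simeq \mathrm{H}^{-i}(X;\mathbb{D}K)$ and $\mathrm{H}^{i-2}_c(X\times_{\mathbb{P}^N}B;K|(-1))^{\vee}\simeq \mathrm{H}^{2-i}(X\times_{\mathbb{P}^N}B;(\mathbb{D}K)|)$, and the Gysin map becomes, up to the purity identification, the restriction map
\begin{equation*}
\mathrm{H}^{-i}(X;\mathbb{D}K)\to \mathrm{H}^{-i}(f^{-1}B;(\mathbb{D}K)|_{f^{-1}B}).
\end{equation*}
Since $K$ satisfies the support condition, $\mathbb{D}K$ satisfies the cosupport condition, which by the Remark after Theorem~\ref{theorem:weak-lefschetz} is exactly the hypothesis under which weak Lefschetz applies: the restriction map is injective for $-i=-1$, i.e. $i=1$, and bijective for $-i<-1$, i.e. $i\geq 2$. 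Dualizing back gives surjectivity for $i=1$ and bijectivity for $i\geq 2$, as claimed. The one point requiring care — and the main obstacle — is checking that the restriction map appearing after dualization literally \emph{is} the dual of the Gysin map constructed from the localization triangle, up to the purity isomorphism of Lemma~\ref{lemma:generic-purity}; this is a compatibility of six-functor adjunctions (the counit $R(\iota'_B)_!R(\iota'_B)^!\to\mathrm{id}$ dualizes to the unit $\mathrm{id}\to R(\iota'_B)_\ast(\iota'_B)^{-1}$) and one must make sure the generic choice of $B$ is simultaneously good for purity (Lemma~\ref{lemma:generic-purity}) and for weak Lefschetz (Theorem~\ref{theorem:weak-lefschetz}), which is fine since each holds on a dense open of the dual projective space and we intersect the two.
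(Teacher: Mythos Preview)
Your proposal is correct and follows essentially the same route as the paper: choose $B$ general enough for both Lemma~\ref{lemma:generic-purity} and Theorem~\ref{theorem:weak-lefschetz}, apply weak Lefschetz to $\mathbb{D}K$ (which satisfies the cosupport condition), dualize to obtain the Gysin map $\mathrm{H}^i_c(X\times_{\mathbb{P}^N}B;R(\iota'_B)^!K)\to\mathrm{H}^i_c(X;K)$, and then invoke purity to rewrite the source. The only slip is a harmless index error in your intermediate display---$\mathrm{H}^{i-2}_c(X\times_{\mathbb{P}^N}B;K|(-1))^{\vee}$ is $\mathrm{H}^{-i}$, not $\mathrm{H}^{2-i}$, once the $[-2]$ shift from purity is absorbed---but you land on the correct degree $-i$ in the restriction map anyway.
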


\begin{proof}
  We choose a sufficiently generic hyperplane \(B\) such that the
  conclusion of Theorem~\ref{theorem:weak-lefschetz} holds for
  \(\mathbb{D}K\), and such that the conclusion of
  Lemma~\ref{lemma:generic-purity} holds for \(K\).  Thus
  \begin{equation*}
    \mathrm{H}^i(X;\mathbb{D}K) \to \mathrm{H}^i(X\times_{\mathbb{P}^{N}} B; \iota_B^{\ast}(\mathbb{D}K))
  \end{equation*}
  is injective for \(i = -1\), and is bijective for \(i < -1\).
  Taking Verdier duality yields that the natural map
  \begin{equation*}
    \mathrm{H}^i_{c}(X\times_{\mathbb{P}^{N}} B; R\iota_B^{!}K) \to  \mathrm{H}^i_{c}(X;K)
  \end{equation*}
  is surjective for \(i = 1\), and is bijective for \(i > 1\).
  By our choice, \(R\iota_B^{!}K \simeq \iota_B^{\ast}K[-2](-1)\).
  Therefore we obtain a map
  \begin{equation*}
    \mathrm{H}^{i-2}_{c}(X\times_{\mathbb{P}^{N}} B; K|_{X\times_{\mathbb{P}^{N}} B}(-1)) \to  \mathrm{H}^i_{c}(X;K),
  \end{equation*}
  which is surjective for \(i = 1\), and bijective for \(i > 1\).
\end{proof}

Let \(X\) be an algebraic variety of dimension \(\leq n\).  Then for
any constructible \(\Lambda\)-sheaf \(\mathcal{F}\) on \(X\), it is
trivial to show that \(\mathcal{F}[n]\) satisfies the support condition.
Applying Lemma~\ref{lemma:gysin-perv} gives the following result.

\begin{lemma}
\label{lemma:gysin}
Let \(X\) be a \(k\)-variety of dimension \(\leq n\).  Let
\(f\colon X \to \mathbb{P}^{N}\) be a quasi-finite immersion.  Let
\(\mathcal{F}\) be a constructible sheaf on \(X\).  Then for a
sufficiently general hyperplane \(B\), there exists a Gysin map
\begin{equation*}
\mathrm{H}^{i-2}_{c}(X \times_{\mathbb{P}^{N}} B ; \mathcal{F}(-1)) \to \mathrm{H}^{i}_{c}(X ; \mathcal{F})
\end{equation*}
which is surjective if \(i = n + 1\), and is bijective if
\(i \geq n + 2\).\qed
\end{lemma}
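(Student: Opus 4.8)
The plan is to deduce Lemma~\ref{lemma:gysin} from Lemma~\ref{lemma:gysin-perv} by applying it to the shifted complex $K = \mathcal{F}[n]$ and then unwinding the degree shift. First I would record the only nontrivial-looking input: that $\mathcal{F}[n]$ satisfies the support condition. Indeed, $\mathcal{F}[n]$ is a constructible complex on $X$ whose only nonzero cohomology sheaf is $\mathcal{H}^{-n}(\mathcal{F}[n]) = \mathcal{F}$, which is supported on $X$; since $\dim X \leq n$ we have $\dim\operatorname{Supp}\mathcal{H}^{-n}(\mathcal{F}[n]) \leq n = -(-n)$, and all other cohomology sheaves vanish, so the support condition $\dim\operatorname{Supp}\mathcal{H}^{m}(K)\leq -m$ holds. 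Note that the hypothesis that $f$ be an immersion is not needed for the argument; it is recorded only because the later applications use it, and Lemma~\ref{lemma:gysin-perv} already applies to an arbitrary quasi-finite $f$.

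Next I would invoke Lemma~\ref{lemma:gysin-perv} with this $K$: for a sufficiently general hyperplane $B$ and every index $j$ there is a Gysin map
\[
\mathrm{H}^{j-2}_{c}(X\times_{\mathbb{P}^{N}}B;\ K|_{X\times_{\mathbb{P}^{N}}B}(-1)) \longrightarrow \mathrm{H}^{j}_{c}(X;K),
\]
which is surjective for $j=1$ and bijective for $j\geq 2$. Then I would translate this back using $K=\mathcal{F}[n]$. Since shift and Tate twist commute with restriction, $K|_{X\times_{\mathbb{P}^{N}}B}(-1) = (\mathcal{F}|_{X\times_{\mathbb{P}^{N}}B})(-1)[n]$, so $\mathrm{H}^{j-2}_{c}(X\times_{\mathbb{P}^{N}}B;K|(-1)) = \mathrm{H}^{j-2+n}_{c}(X\times_{\mathbb{P}^{N}}B;\mathcal{F}(-1))$, and likewise $\mathrm{H}^{j}_{c}(X;K) = \mathrm{H}^{j+n}_{c}(X;\mathcal{F})$. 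Setting $i=j+n$, the displayed map becomes exactly
\[
\mathrm{H}^{i-2}_{c}(X\times_{\mathbb{P}^{N}}B;\ \mathcal{F}(-1)) \longrightarrow \mathrm{H}^{i}_{c}(X;\mathcal{F}),
\]
with $j=1$ corresponding to $i=n+1$ (surjectivity) and $j\geq 2$ to $i\geq n+2$ (bijectivity), as claimed. Compatibility with the Frobenius action (arithmetic case) or with mixed Hodge structures (Betti case) is inherited from the construction in Lemma~\ref{lemma:gysin-perv}, which only used the six operations and Verdier duality.

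I do not expect any real obstacle here: all the substance is in the perverse weak-Lefschetz statement of Lemma~\ref{lemma:gysin-perv} (hence ultimately in Theorem~\ref{theorem:weak-lefschetz} and Lemma~\ref{lemma:generic-purity}), and the present lemma is merely its reformulation for an ordinary constructible sheaf obtained by the $n$-fold shift $\mathcal{F}\rightsquigarrow\mathcal{F}[n]$. The one point worth stating carefully, rather than glossing, is the support-condition verification above, since that is exactly what lets us feed $\mathcal{F}[n]$ into Lemma~\ref{lemma:gysin-perv} in place of a genuine perverse sheaf.
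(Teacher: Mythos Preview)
Your proposal is correct and is exactly the paper's argument: the paper simply notes that \(\mathcal{F}[n]\) satisfies the support condition (since \(\dim X \leq n\)) and then invokes Lemma~\ref{lemma:gysin-perv}, which is precisely what you do after unwinding the shift \(i=j+n\). Your additional remarks (that the immersion hypothesis is not needed, and the explicit check of the support condition) are accurate elaborations of what the paper leaves implicit.
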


\section{The method of del~Angel and Esnault--Katz}

\begin{situation}\label{situation:overall}
  Let \(f_1,\ldots,f_r \in k[x_1,\ldots,x_N]\) be a collection
  of polynomials.  Let \(X\) be the common zero locus of
  \(f_1,\ldots,f_r\) in \(\mathbb{A}^N\).
  Assume that \(d_1 \geq \cdots \geq d_r\) is a sequence of numbers
  such that \(d_i \geq \deg f_i\).
  Let \(n = \dim X\).  In general, \(n \geq N-r\); the equality holds
  if and only if \(X\) is a (set-theoretic) complete intersection by the \(r\) defining equations.
\end{situation}

The following lemma proves the main theorems when \(X\) is a
set-theoretic complete intersection, and it also proves
Theorem~\ref{theorem:positive}.

\begin{lemma}%
  \label{lemma:complete-intersection-bound}
  In Situation~\ref{situation:overall}, the colevel of
  \(\mathrm{H}^{n+j}_{c}(X)\) is
  \(\geq \mu_j(N;d_1,\ldots,d_r)\) for $j\geq 0$.
\end{lemma}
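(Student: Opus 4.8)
The plan is to prove the lemma by induction on $n=\dim X$, using the Gysin map of Lemma~\ref{lemma:gysin} to pass from $X$ to a generic hyperplane section, and the Ax--Katz type bound of Theorem~\ref{theorem:previous}(\ref{item-1a}) to handle the bottom of the recursion.

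First, the case $j=0$ requires no induction at all: Theorem~\ref{theorem:previous}(\ref{item-1a}) asserts that \emph{every} group $\mathrm{H}^{\ast}_{c}(X)$, in particular $\mathrm{H}^{n}_{c}(X)$, has colevel $\geq\mu_0(N;d_1,\ldots,d_r)$. So I may assume $j\geq 1$ and induct on $n$. When $n=0$, $X$ is a finite set and $\mathrm{H}^{j}_{c}(X)=0$ for $j\geq 1$, so the statement is vacuous. Assume $n\geq 1$ and that the lemma is known for every closed subset of an affine space of dimension $n-1$, with an arbitrary number of defining equations and arbitrary degree bounds.

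Now regard the inclusion $X\hookrightarrow\mathbb{A}^N\hookrightarrow\mathbb{P}^N$ as a quasi-finite immersion, with $H_\infty=\mathbb{P}^N\setminus\mathbb{A}^N$, and choose a hyperplane $B\subset\mathbb{P}^N$ generic enough that: $B\not\supseteq H_\infty$, so that $B':=B\cap\mathbb{A}^N\cong\mathbb{A}^{N-1}$; the section $X':=X\times_{\mathbb{P}^N}B=X\cap B'$ has dimension $n-1$; and the Gysin map of Lemma~\ref{lemma:gysin} for $\mathcal{F}=\Lambda$ exists and is surjective in the relevant degree. (Each condition is a nonempty Zariski-open condition on the dual projective space, so a common generic $B$ exists; one should check that a generic hyperplane section of an $n$-dimensional affine variety still has dimension $n-1$ when $X$ is reducible -- the part that can be lost lies in $\overline{X}\cap H_\infty$, which has dimension $\leq n-1$, and a generic $B$ meets it in dimension $\leq n-2$.) After a linear change of coordinates, $X'$ is the common zero locus in $\mathbb{A}^{N-1}$ of the restrictions of $f_1,\ldots,f_r$, whose degrees are still bounded by $d_1,\ldots,d_r$, so the inductive hypothesis applies to $X'$. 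Since $X'$ has dimension $n-1$, for $j\geq 1$ the Gysin map reads
\[
\mathrm{H}^{(n-1)+(j-1)}_{c}(X';\Lambda)(-1)\longrightarrow \mathrm{H}^{n+j}_{c}(X;\Lambda)
\]
and is surjective (indeed bijective once $j\geq 2$). A surjection in the category of mixed Hodge structures, resp.\ of Frobenius modules, does not lower the colevel, and twisting by $\Lambda(-1)$ raises it by $1$; combined with the inductive hypothesis for $X'$ this yields
\[
\operatorname{colevel}\mathrm{H}^{n+j}_{c}(X)\;\geq\;1+\mu_{j-1}(N-1;d_1,\ldots,d_r).
\]

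It only remains to observe that this is precisely the asserted bound. Writing $D=\sum_{i=1}^{r}d_i$, Definition~\ref{definition:mu} gives directly
\[
1+\mu_{j-1}(N-1;d_1,\ldots,d_r)=1+(j-1)+\max\left\{0,\left\lceil\frac{N-j-D}{d_1}\right\rceil\right\}=\mu_j(N;d_1,\ldots,d_r),
\]
which completes the induction. Finally, the statement for $\mathbb{A}^N\setminus X$ -- and hence Theorem~\ref{theorem:positive} -- follows formally from the long exact sequence relating $\mathrm{H}^{\ast}_{c}(\mathbb{A}^N\setminus X)$, $\mathrm{H}^{\ast}_{c}(\mathbb{A}^N)$ and $\mathrm{H}^{\ast}_{c}(X)$ together with the known value of $\mathrm{H}^{\ast}_{c}(\mathbb{A}^N)$. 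The genuinely nontrivial input is the existence of the Gysin map, established in Lemmas~\ref{lemma:generic-purity}--\ref{lemma:gysin}; granting that, the main thing to watch in the argument above is the behaviour of generic hyperplane sections along $H_\infty$.
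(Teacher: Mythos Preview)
Your proof is correct and takes essentially the same approach as the paper: apply the Gysin lemma (Lemma~\ref{lemma:gysin}) to pass to a generic hyperplane section and then invoke the Ax--Katz type bound of Theorem~\ref{theorem:previous}(\ref{item-1a}). The only cosmetic difference is that the paper iterates the Gysin map $j$ times directly rather than packaging it as an induction on $\dim X$; your parenthetical about $H_\infty$ is a bit muddled (the fact that a generic hyperplane section drops the dimension by one is purely algebraic and has nothing to do with the boundary at infinity), but the conclusion you draw from it is correct.
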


\begin{proof}
  Choose \(n\) sufficiently general hyperplanes \(B_1,\ldots,B_{n}\).
  Since \(B_1,\ldots,B_{n}\) are general, for each \(j\),
  \(X \cap B_1 \cap \cdots \cap B_j\) is a Zariski closed subset cut
  out in \(\mathbb{A}^{N-j}\) by equations of degrees at most
  \(d_1,\ldots,d_r\), and has dimension equal to \(n-j\).

  Applying the Gysin lemma, Lemma~\ref{lemma:gysin}, to \(X_i\)
  gives the following chain of maps:
  \begin{equation*}
    \begin{tikzcd}[column sep=tiny]
      \mathrm{H}^{n}_{c}(X) & \mathrm{H}^{n+1}_{c}(X) & \mathrm{H}^{n+2}_{c}(X) & \cdots \\
      & \mathrm{H}^{n-1}_{c}(X \cap B_1;\Lambda(-1)) \ar[two heads,u] & \mathrm{H}^{n}_{c}(X \cap B_1;\Lambda(-1)) \ar[u, "\sim" {anchor=south, rotate=90}] &\cdots\\
      & & \mathrm{H}^{n-2}_{c}(X \cap B_1 \cap B_2;\Lambda(-2)) \ar[u,two heads] & \cdots \\
      & & & \ddots
    \end{tikzcd}
  \end{equation*}
  It follows that the colevel of \(\mathrm{H}^{n+j}_{c}(X)\) is at least
  the colevel of \(\mathrm{H}^{n-j}_{c}(X \cap B_1 \cap \cdots \cap B_j)\) plus \(j\).
  By the theorem of Esnault and
  Katz, Theorem~\ref{theorem:previous}(\ref{item-1a}),
  the colevel of \(\mathrm{H}^{n-j}_{c}(X \cap B_1 \cap \cdots \cap B_j)\) is a least \(\mu_0(N-j;d_1,\ldots,d_r)\).
  It follows that the colevel of \(\mathrm{H}^{n+j}_{c}(X)\) is at least
  \(\mu_0(N-j;d_1,\ldots,d_r)+j = \mu_j(N;d_1,\ldots,d_r)\).
\end{proof}

While Lemma~\ref{lemma:complete-intersection-bound} answers positively
Question~\ref{question}, it does not provide the most optimal bounds
unless \(n = N-r\), and it does not provide information for colevels
of \(\mathrm{H}^j_{c}\) when \(N-r\leq j \leq n\).  In the following, we
shall focus on proving the improved bounds stated in
Theorems~\ref{theorem:beyond} and \ref{theorem:before}.

We shall first treat the case when \(X\) is ``not too far from'' a
complete intersection; that is, \(X\) is obtained from a complete
intersection by adding an extra equation which does not drop the
dimension.  See
Lemma~\ref{lemma:almost-complete-intersection-colevel}.  Note however
we formulate the result in a slightly more general way.  This is to
facilitate an induction argument used in next section.  This sort of
formulation had been used by del
Angel~\cite{del-angel:hodge-type-projective-varieties-low-degree} and
Esnault--Katz.  Let us state the theorem of
Esnault--Katz~\cite[Theorem~2.1,
\S5.3]{esnault-katz:cohomological-divisibility}, which is slightly
stronger than Theorem~\ref{theorem:previous}(\ref{item-1a}), and will
be used later.

\begin{theorem}[del~Angel, Esnault--Katz]%
\label{ek}
Suppose we are given a list of polynomials
\(f_1, \ldots ,f_r \in k[x_1, \ldots, x_N]\) with
\(\deg f_i \leq d_i\), \(d_i \geq 1\).  Take any integer \(R \geq 1\)
and any list of polynomials
\(g_1,\ldots,g_R \in k[x_1, \ldots, x_N]\), such that each \(g_j\) is
a product \(g_j = \prod_{i}f_i^{a_{i,j}}\) with \(a_{i,j} \geq 0\).
Denote by \(U\) the closed subscheme of \(\mathbb{A}^N\) defined by
\(g_1 = \cdots = g_R = 0\).  Then for any \(m \in \mathbb{Z}\), the
colevel of \(\mathrm{H}^{m}_{c}(U)\) is at least
\(\mu_0(N; d_1, \ldots, d_r)\).
\end{theorem}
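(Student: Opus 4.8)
The final statement is the Esnault--Katz colevel bound; the plan is to deduce it from Theorem~\ref{theorem:previous}(\ref{item-1a}) --- the colevel bound for the common zero locus of finitely many polynomials --- by a purely formal Mayer--Vietoris d\'evissage that uses only the monomial shape of the $g_{j}$. The first step is to describe $U$ set-theoretically. For each $j$ put $S_{j} = \{\, i : a_{i,j} \geq 1 \,\}$; after discarding redundant equations we may assume every $g_{j}$ is nonconstant (otherwise either some $S_{j}$ is empty and $U = \emptyset$, so there is nothing to prove, or an equation drops out), so each $S_{j}$ is nonempty. Then $V(g_{j}) = \bigcup_{i \in S_{j}} V(f_{i})$, and distributing the intersection $U = \bigcap_{j} V(g_{j})$ over these unions yields
\[
U_{\mathrm{red}} \;=\; \bigcup_{A} V_{A}, \qquad V_{A} := \bigcap_{i \in A} V(f_{i}),
\]
the union running over the minimal transversals $A \subseteq \{1, \ldots, r\}$ of the family $\{S_{j}\}_{j=1}^{R}$, that is, the (finitely many) minimal subsets meeting every $S_{j}$. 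This presents $U$ as a finite closed union in which every member, and every finite intersection $V_{A_{1}} \cap \cdots \cap V_{A_{t}} = V_{A_{1} \cup \cdots \cup A_{t}}$ of members, is again of the form $V_{B} = V\bigl((f_{i})_{i \in B}\bigr)$ for a nonempty $B \subseteq \{1, \ldots, r\}$.

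The second step is the compactly supported Mayer--Vietoris spectral sequence for this closed cover. Since pushforward along a closed immersion is exact and commutes with $\mathrm{R}\Gamma_{c}$, the \v{C}ech resolution of the constant sheaf on $U$ by the constant sheaves supported on the $V_{A}$ gives a spectral sequence
\[
E_{1}^{p,q} \;=\; \bigoplus_{|\Sigma| = p+1} \mathrm{H}^{q}_{c}\bigl(V_{A_{\Sigma}}\bigr) \;\Longrightarrow\; \mathrm{H}^{p+q}_{c}(U), \qquad A_{\Sigma} := \bigcup_{\alpha \in \Sigma} A_{\alpha},
\]
whose differentials and edge maps to the abutment are built from the six operations, hence are morphisms of mixed Hodge structures in the Betti setting and Frobenius-equivariant in the $\ell$-adic setting (cf.\ the Conventions). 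As the condition ``colevel $\geq c$'' is stable under subobjects, quotients, and extensions --- both for mixed Hodge structures, since the Hodge filtration passes to subs and quotients, and for Frobenius modules, since the eigenvalues of a subquotient lie among those of the ambient module --- it suffices to bound the colevel of each term $\mathrm{H}^{q}_{c}(V_{B})$ occurring on the $E_{1}$-page.

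The third step is to feed in the hypothesis. Each $V_{B}$ is the common zero locus in $\mathbb{A}^{N}$ of the $|B|$ polynomials $(f_{i})_{i \in B}$, of degrees $\leq d_{i}$ respectively, so Theorem~\ref{theorem:previous}(\ref{item-1a}) gives that $\mathrm{H}^{q}_{c}(V_{B})$ has colevel at least $\mu_{0}\bigl(N; (d_{i})_{i \in B}\bigr)$ for every $q$. It then remains to observe the elementary monotonicity $\mu_{0}\bigl(N; (d_{i})_{i \in B}\bigr) \geq \mu_{0}(N; d_{1}, \ldots, d_{r})$: passing to the subfamily $B$ only decreases both the subtracted sum $\sum_{i} d_{i}$ and the leading term $d_{1}$, and by Definition~\ref{definition:mu} decreasing either of these can only increase $\mu_{0}$ (or leave it equal to $0$, because of the outer $\max\{0, -\}$). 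Chaining the three steps, every $E_{1}^{p,q}$, hence every $E_{\infty}^{p,q}$, hence $\mathrm{H}^{m}_{c}(U)$ for every $m$, has colevel at least $\mu_{0}(N; d_{1}, \ldots, d_{r})$, as claimed.

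I do not expect a genuine obstacle: all the mathematical content sits in the input Theorem~\ref{theorem:previous}(\ref{item-1a}), and the only points needing a little care are the bookkeeping for the closed-cover spectral sequence and the (routine, given the formalism already in place) check that its maps are compatible with the Hodge and Frobenius structures.
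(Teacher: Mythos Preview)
Your argument is correct. The set-theoretic rewriting of $U$ as a finite union of $V_{A}$'s, the closed-cover Mayer--Vietoris spectral sequence, the observation that all multiple intersections are again of the form $V_{B}$, and the monotonicity $\mu_{0}(N;(d_i)_{i\in B})\geq\mu_{0}(N;d_1,\ldots,d_r)$ are all fine; the compatibility with Hodge/Frobenius structures is exactly the sort of thing the paper's Conventions allow you to take for granted.

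As for comparison: the paper does not actually prove this statement. It is quoted as a result of Esnault--Katz (with attribution to del~Angel's method) and used as a black box; the paper only remarks that it is ``slightly stronger'' than Theorem~\ref{theorem:previous}(\ref{item-1a}). Your proposal supplies precisely the missing d\'evissage that bridges the two: you reduce the product-of-$f_i$ version to the plain common-zero-locus version via Mayer--Vietoris, which is indeed the flavor of argument that del~Angel and Esnault--Katz use in the references cited. The paper itself deploys the same closed-cover spectral sequence later (Lemma~\ref{lemma:spectral-sequence}) for a different purpose, so your approach is entirely consonant with the surrounding machinery.
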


\begin{situation}
  \label{hypothesis}
  In Situation~\ref{situation:overall},
  let \(V\) be the variety defined by \(f_1 = \cdots = f_{N-n} = 0\).
  We make the following additional hypothesis:
  \[
    \dim V = \dim X = n.
  \]
  In other words, we assume in addition that \(V\) is a set-theoretic
  complete intersection cut out by \(f_{1},\ldots,f_{N-n}\).

  For every nonempty subset \(J\)
  of \(\{N-n+1,\ldots,r\}\), let \(X_J\) be the closed
  subscheme of \(V\) defined by \(\prod_{j\in J}f_j = 0\).  Since
  \(V \supset X_J \supset X\), we know \(\dim X_J = n\).
  Note that when $J$ is the empty set, we have $X_J=V$ by convention.
\end{situation}

\begin{lemma}%
  \label{lemma:basis-induction-almost-complete-intersection-colevel}
  In Situation~\ref{hypothesis}, for every \(J \subset \{N-n+1,\ldots,r\}\),
  the colevel of \(\mathrm{H}^{n}_{c}(V\setminus X_J)\) is
  \(\geq \mu_0(N;d_1,\ldots,d_r)\).
\end{lemma}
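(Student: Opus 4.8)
The plan is to deduce the bound for $\mathrm{H}^n_c(V\setminus X_J)$ from Theorem~\ref{ek}, using the long exact sequence of compactly supported cohomology associated with the open-closed decomposition of $V$. First I would observe that since $X_J$ is a closed subscheme of $V$ defined by the single equation $g=\prod_{j\in J}f_j=0$, the complement $V\setminus X_J$ is the open subvariety of $V$ on which $g$ is invertible. Since $V$ is itself the set-theoretic complete intersection $f_1=\cdots=f_{N-n}=0$ in $\mathbb{A}^N$, this means that $V\setminus X_J$ can be described inside $\mathbb{A}^{N+1}$ (with one extra coordinate $y$) as the locus $f_1=\cdots=f_{N-n}=0$, $y\cdot g-1=0$; here the last equation has degree $1+\deg g = 1+\sum_{j\in J}\deg f_j$, which is at most $1+\sum_{j\in J}d_j$ and in particular at most $d_1\cdot|J|+1$. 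This is a Zariski closed subset of $\mathbb{A}^{N+1}$, cut out by $N-n+1$ equations, and one needs to check that the resulting multidegree sequence still gives $\mu_0(N+1;\text{new degrees})\geq\mu_0(N;d_1,\ldots,d_r)$ — but that is exactly the kind of estimate the del~Angel/Esnault--Katz dévissage (Theorem~\ref{ek}) is designed to absorb, since we are allowed to take $g_j$ to be products of the $f_i$.

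Concretely, I would apply Theorem~\ref{ek} with the list $f_1,\ldots,f_{N-n}$ together with the polynomial $g=\prod_{j\in J}f_j$ (a product of the original $f_i$, as required), realizing $V\setminus X_J$ as a closed subscheme $U\subset\mathbb{A}^{N+1}$ via the graph-of-the-inverse trick above. One technical point: Theorem~\ref{ek} is stated for a closed subscheme of $\mathbb{A}^N$ defined by an arbitrary collection of products $g_1,\ldots,g_R$ of the $f_i$, and gives colevel $\geq\mu_0(N;d_1,\ldots,d_r)$; I would need the version where the ambient polynomials include not only products of the $f_i$ but also one extra linear-in-$y$ polynomial $yg-1$. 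This is handled by enlarging the coordinate set and the list of degrees: treat $yg-1$ as having degree $d_1$ bounded by whatever is needed, and note that the dévissage in \cite{esnault-katz:cohomological-divisibility} only uses multiplicativity of the $g_j$ in the $f_i$-directions while the new "linearizing" variable contributes nothing to the Ax--Katz count (it raises $N$ by $1$ but also adds $1$ to the sum of degrees, leaving $\mu_0$ unchanged). Alternatively, and more cleanly, I would quote the full strength of Esnault--Katz Theorem~2.1/\S5.3 which already allows such auxiliary equations.

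Having placed $V\setminus X_J$ as a closed subvariety of affine space covered by Theorem~\ref{ek} (or Theorem~\ref{theorem:previous}(\ref{item-1a}) with the mild extension just described), we conclude that \emph{every} $\mathrm{H}^m_c(V\setminus X_J)$ has colevel $\geq\mu_0(N;d_1,\ldots,d_r)$; in particular $m=n$ gives the claim. The main obstacle I anticipate is not conceptual but bookkeeping: one must verify that the multidegrees arising after the "graph trick" and after the Esnault--Katz reduction still produce the number $\mu_0(N;d_1,\ldots,d_r)$ and not something smaller — i.e.\ that enlarging $N$ by one and inserting the degree of $yg-1$ does not decrease the $\mu_0$-value. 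Since $\mu_0(N;d_1,\ldots,d_r)=\lceil (N-\sum d_i)/d_1\rceil$ when this is nonnegative, adding a variable and a compensating degree-at-least-one equation can only keep it fixed or raise it, so the inequality goes the right way. Once this numerical check is in place the lemma follows immediately, and it will serve as the base case for the induction in the next section.
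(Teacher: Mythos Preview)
Your opening sentence names exactly the right tool---the long exact sequence for the open--closed pair \((V\setminus X_J,\,V,\,X_J)\)---but you then abandon it for the ``graph trick'' embedding of \(V\setminus X_J\) into \(\mathbb{A}^{N+1}\), and that detour does not work.

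The numerical claim at the end is false.  The polynomial \(yg-1\) has total degree \(1+\deg g\), not \(1\); so adding a new variable and this equation changes the Ax--Katz quantity from \(\mu_0(N;d_1,\ldots,d_r)\) to \(\mu_0\bigl(N+1;\,\max(d_1,1+\deg g),\ldots\bigr)\).  When \(|J|\) is large, \(1+\deg g\) exceeds \(d_1\), becomes the new denominator, and the bound collapses.  Concretely, take \(N=100\), \(r=5\), \(d_1=\cdots=d_5=2\), \(n=98\), \(J=\{3,4,5\}\).  The target is \(\mu_0(100;2,2,2,2,2)=45\).  Your embedding cuts out \(V\setminus X_J\) in \(\mathbb{A}^{101}\) by equations of degrees \(2,2,7\), giving only \(\mu_0(101;7,2,2)=\lceil 90/7\rceil=13\).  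Theorem~\ref{ek} cannot rescue this, because \(yg-1\) is not a monomial in the \(f_i\); the hypothesis that each \(g_j\) be a \emph{product} of the \(f_i\) is precisely what makes that theorem stronger than the raw Ax--Katz bound, and it is genuinely violated here.

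The paper carries out your first plan instead.  From
\[
\cdots \to \mathrm{H}^{n-1}_c(X_J)\to \mathrm{H}^n_c(V\setminus X_J)\to \mathrm{H}^n_c(V)\to\cdots
\]
one bounds \(\mathrm{H}^n_c(V)\) by Lemma~\ref{lemma:complete-intersection-bound} (colevel \(\geq\mu_0(N;d_1,\ldots,d_{N-n})\geq\mu_0(N;d_1,\ldots,d_r)\)), and bounds \(\mathrm{H}^{n-1}_c(X_J)\) by applying Theorem~\ref{ek} to the \emph{closed} scheme \(X_J=\{f_1=\cdots=f_{N-n}=\prod_{j\in J}f_j=0\}\), whose last equation \emph{is} a product of the \(f_i\).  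The exact sequence then gives the desired colevel for \(\mathrm{H}^n_c(V\setminus X_J)\).  The moral: apply Theorem~\ref{ek} to \(X_J\), not to its complement.
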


\begin{proof}
  Since \(V\) is an affine complete intersection of pure dimension \(n\),
  and since \(X_J\) is a hypersurface in \(V\), the open subscheme
  \(V\setminus X_J\) is an affine local complete intersection of pure dimension \(n\).
  Consider the following long exact sequence
  \begin{equation}\label{eq:les}
    \begin{tikzcd}
      0 \arrow[r]
      & 0 \arrow[r]
      \arrow[d, phantom, ""{coordinate, name=F}]
      & \mathrm{H}^{n-1}_{c}(X_J)
      \arrow[dll,
      rounded corners,
      to path={ -- ([xshift=2ex]\tikztostart.east)
        |- (F) [near end]\tikztonodes
        -| ([xshift=-2ex]\tikztotarget.west)
        -- (\tikztotarget)}] \\
      \mathrm{H}^{n}_{c}(V\setminus X_J) \arrow[r]
      & \mathrm{H}^{n}_{c}(V) \arrow[r]
      \arrow[d, phantom, ""{coordinate, name=E}]
      & \mathrm{H}^{n}_{c}(X_J) \arrow[dll, rounded corners,
      to path={ -- ([xshift=2ex]\tikztostart.east)
        |- (E) [near end]\tikztonodes
        -| ([xshift=-2ex]\tikztotarget.west)
        -- (\tikztotarget)}] \\
      \mathrm{H}^{n+1}_{c}(V\setminus X_J) \arrow[r]
      & \mathrm{H}^{n+1}_{c}(V) \arrow[r]
      & \mathrm{H}^{n+1}_{c}(X_J) \arrow[r] & \cdots.
    \end{tikzcd}
  \end{equation}

  By Lemma \ref{lemma:complete-intersection-bound}, the colevel of
  \(\mathrm{H}^{n}_{c}(V)\) is at least
  \(\mu_0(N;d_1,\ldots,d_{N-n})\).
  By Theorem~\ref{ek}
  (\(R=N-n+1\), \(U = X_J = \{f_1=\cdots=f_{N-n}=\prod_{j\in J}f_j=0\}\)),
  the colevel of \(\mathrm{H}^{n-1}_{c}(X_J)\) is at least
  \(\mu_0(N;d_1,\ldots,d_r)\).
  From the  exact sequence~(\ref{eq:les}), we see the colevel of
  \(\mathrm{H}^{n}_{c}(V\setminus X)\) is at least
  \(\mu_0(N;d_1,\ldots,d_r)\).
\end{proof}




\begin{lemma}%
  \label{lemma:almost-complete-intersection-colevel} Assume $r\geq N-n+1$.
  In Situation~\ref{hypothesis}, for every subset \(J\) of \(\{N-n+1,\ldots,r\}\),
  the colevel of \(\mathrm{H}^{n+j}_{c}(X_J)\) is at least \(\nu_j^{(r-1)}(N;d_1,\ldots,d_r)\).
\end{lemma}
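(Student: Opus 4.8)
The idea is to induct on $|J|$, using the Gysin lemma (Lemma~\ref{lemma:gysin}) together with a long exact sequence that removes one factor from $\prod_{j\in J}f_j$ at a time. Let me set up the induction. When $J$ is empty, $X_J = V$ is a set-theoretic complete intersection by $N-n$ equations of degrees $d_1,\ldots,d_{N-n}$, so Lemma~\ref{lemma:complete-intersection-bound} gives that the colevel of $\mathrm{H}^{n+j}_c(V)$ is at least $\mu_j(N;d_1,\ldots,d_{N-n})$; I would then check that this is at least $\nu_j^{(r-1)}(N;d_1,\ldots,d_r)$ using the monotonicity properties in Lemma~\ref{lemma:properties-of-nu}. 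The base case $|J|=1$ is essentially Lemma~\ref{lemma:basis-induction-almost-complete-intersection-colevel}, extended from $\mathrm{H}^n_c$ to $\mathrm{H}^{n+j}_c$ by the Gysin lemma: slicing $V\setminus X_J$ with $j$ general hyperplanes and applying Lemma~\ref{lemma:gysin} reduces the colevel of $\mathrm{H}^{n+j}_c(V\setminus X_J)$ to the colevel of $\mathrm{H}^n_c$ of a complete-intersection complement cut out in $\mathbb{A}^{N-j}$, then $\mathrm{H}^{n+j}_c(X_J)$ fits into the exact sequence relating $V$, $V\setminus X_J$, and $X_J$.

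For the inductive step, suppose the bound is known for all subsets of size $<|J|$. Write $J = J'\sqcup\{j_0\}$ with $j_0$ the largest index in $J$. Consider the decomposition $X_J = X_{J'}\cup (V\cap\{f_{j_0}=0\})$, or more usefully the pair $(X_J, X_{J'})$ whose "difference" is an open subset of $X_J$ lying in $V\setminus X_{J'}$, namely $X_J\setminus X_{J'} = (V\setminus X_{J'})\cap\{f_{j_0}=0\}$ — a hypersurface section of the local complete intersection $V\setminus X_{J'}$. The long exact sequence
\[
\cdots \to \mathrm{H}^{n+j}_c(X_J\setminus X_{J'}) \to \mathrm{H}^{n+j}_c(X_J) \to \mathrm{H}^{n+j}_c(X_{J'}) \to \cdots
\]
bounds the colevel of $\mathrm{H}^{n+j}_c(X_J)$ below by the minimum of the colevels of $\mathrm{H}^{n+j}_c(X_J\setminus X_{J'})$ and $\mathrm{H}^{n+j}_c(X_{J'})$. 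The latter is handled by the inductive hypothesis (as $|J'| = |J|-1$). For the former, I would first pass to $\mathrm{H}^{n+j}_c$ via the Gysin lemma applied to $j$ general hyperplane slices of $X_J\setminus X_{J'}$, reducing to the colevel of $\mathrm{H}^{n}_c$ of the hypersurface section, inside $\mathbb{A}^{N-j}$, of a local complete intersection complement; then a further long exact sequence (for the pair: local complete intersection complement, its hypersurface section) together with Theorem~\ref{ek} applied to the products $f_1,\ldots,f_{N-n},\prod_{i\in J}f_i$ (or the appropriate list of products) bounds this colevel from below.

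The main obstacle — and the reason the statement is phrased with the somewhat mysterious exponent $(r-1)$ rather than $(r)$ — is bookkeeping the arithmetic of the $\nu$-numbers through the induction: I need each long-exact-sequence step and each Gysin shift to land on exactly the right $\nu_j^{(e)}$, and the monotonicity facts (Lemma~\ref{lemma:properties-of-nu}, especially \ref{property-3} which trades $j$ for $e$, and \ref{property-4} which reduces the number of equations) are precisely what make the various bounds compatible. Concretely, the colevel coming from $V = X_\emptyset$ is governed by $\mu_j(N;d_1,\ldots,d_{N-n}) = \nu_j^{(N-n)}(N;d_1,\ldots,d_{N-n})$, which by \ref{property-4} dominates $\nu_j^{(N-n)}(N;d_1,\ldots,d_r)$ shifted appropriately, while the contributions from the extra equations $f_{j}$, $j > N-n$, enter through Theorem~\ref{ek} as the $\mu_0$-type bound $\nu_0^{(\cdot)}$; the claim is that in all cases these are $\geq \nu_j^{(r-1)}(N;d_1,\ldots,d_r)$. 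I expect the verification that the exact-sequence and Gysin estimates at size $|J|$ assemble into exactly $\nu_j^{(r-1)}$ — rather than something weaker — to be the delicate point, requiring careful choice of which inequality in Lemma~\ref{lemma:properties-of-nu} to invoke at each node.
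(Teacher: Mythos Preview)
Your induction on \(|J|\) is unnecessary, and the inductive step as written does not yield the required bound. The paper's proof is essentially what you describe as the ``base case \(|J|=1\)'', except that Lemma~\ref{lemma:basis-induction-almost-complete-intersection-colevel} is already stated for \emph{every} subset \(J\subset\{N-n+1,\ldots,r\}\), so the same argument works uniformly for all \(J\) with no induction at all: one uses the long exact sequence of the pair \((V,X_J)\) and bounds the two neighboring terms.

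There is also a crucial off-by-one. In the exact sequence
\[
\cdots \to \mathrm{H}^{n+j}_{c}(V) \to \mathrm{H}^{n+j}_{c}(X_J) \to \mathrm{H}^{n+j+1}_{c}(V\setminus X_J) \to \cdots,
\]
the complement term sits in degree \(n+j+1\), not \(n+j\). After \(j+1\) applications of the Gysin lemma and Lemma~\ref{lemma:basis-induction-almost-complete-intersection-colevel} one obtains \(\mu_{j+1}(N;d_1,\ldots,d_r)=\nu_{j+1}^{(r)}(N;d_1,\ldots,d_r)\), and it is precisely Lemma~\ref{lemma:properties-of-nu}(\ref{property-3}) that trades this extra unit of \(j\) for a drop in \(e\), giving \(\nu_j^{(r-1)}(N;d_1,\ldots,d_r)\). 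That ``\(+1\)'' is the entire source of the superscript \((r-1)\).

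Your inductive step loses exactly this. Bounding \(\mathrm{H}^{n+j}_{c}(X_J\setminus X_{J'})\) via Gysin slices and the pair \((V\setminus X_{J'},\,X_J\setminus X_{J'})\) only produces \(\mu_j(N;d_1,\ldots,d_r)=\nu_j^{(r)}\), because the open term \(\mathrm{H}^{n-j}_{c}\) of the sliced complement sits at its middle degree and contributes only \(\mu_0\). Since \(\nu_j^{(r)}\) can be strictly smaller than \(\nu_j^{(r-1)}\) (for instance \(d_1=d_2=2\), \(N=6\), \(j=0\) gives \(\mu_0=1<2=\nu_0^{(1)}\)), the induction does not close. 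The fix is simply to drop the induction and run your ``\(|J|=1\)'' argument directly for arbitrary \(J\), with the degree corrected to \(n+j+1\).
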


\begin{proof}
  From the exact sequence \eqref{eq:les} we see that the colevel of
  \(\mathrm{H}^{n+j}_{c}(X_J)\) is controlled by
  \(\mathrm{H}^{n+j}_{c}(V)\) and
  \(\mathrm{H}^{n+j+1}_{c}(V\setminus X_J)\).
  The colevel of the former is at least \(\mu_j(N;d_1,\ldots,d_{N-n})\),
  as shown in Lemma~\ref{lemma:complete-intersection-bound}.
  Clearly
  \begin{equation*}
    \frac{N-j - \sum_{i=1}^{N-n}d_i}{d_1} \geq \frac{N-j - \sum_{i=1}^{r}d_i^{\ast}(e)}{d_1},
  \end{equation*}
  for every \(e \geq N-n\).  Taking \(e = N-n \leq r-1\),
  we conclude that
  \[\mu_j(N;d_1,\ldots, d_{N-n}) \geq \nu_j^{(N-n)}(N;d_1,\ldots,d_r) \geq \nu_j^{(r-1)}(N;d_1,\ldots,d_r).\]

  It remains to prove the colevel of \(\mathrm{H}^{n+j+1}_{c}(V\setminus X_J)\) is
  \(\geq \nu_{j}^{(r-1)}(N;d_1,\ldots,d_r)\).

  For each \(j\), choose a sufficiently general linear subspace \(B\) in
  \(\mathbb{A}^N\) of codimension \(j+1\).
  The Gysin lemma, Lemma~\ref{lemma:gysin}, applied to the locally closed immersion
  \[
    V\setminus X_J \hookrightarrow \mathbb{A}^{N} \subset \mathbb{P}^N
  \] (\(j+1\) times), gives a surjection
  \[
    \mathrm{H}_{c}^{n-(j+1)}((V \cap B) \setminus (X_{J}\cap B);\Lambda(-j-1)) \to
    \mathrm{H}_{c}^{n+j+1}(V \setminus X_J).
  \]
  Now, $V \cap B$ is a complete intersection of dimension $n-(j+1)$ in $\mathbb{A}^{N-(j+1)}$.
  By Lemma~\ref{lemma:basis-induction-almost-complete-intersection-colevel}, we deduce that
  the colevel of \(\mathrm{H}^{n+j+1}_{c}(V\setminus X_J)\)
  is at least
  \[
    j+1 + \mu_0(N-j-1;d_1,\ldots,d_r)=\mu_{j+1}(N;d_1,\ldots,d_r).
  \]
  Since \(r \geq N-n+1 > 1\), Lemma~\ref{lemma:properties-of-nu}(\ref{property-3}) can be
  applied, and we see the above number is
  \(\geq \nu_j^{(r-1)}(N;d_1,\ldots,d_r)\).
\end{proof}

\section{Proofs of main theorems}

In this section we prove Theorems~\ref{theorem:beyond} and
\ref{theorem:before}.  It suffices to prove the colevel bounds for the
affine variety $X$.  The colevel bounds for the complement
$\mathbb{A}^N\setminus X$ then follows by excision.

We will need the following simple lemma.

\begin{lemma}%
  \label{lemma:spectral-sequence}
  Let \(W\) be an algebraic variety.
  Let \(W_1,\ldots,W_{s}\) be a collection of Zariski closed subsets of \(W\),
  such that \(W = \bigcup_{\nu=1}^s W_{\nu}\).
  Then the colevel of \(\mathrm{H}^i_{c}(W_1 \cap \cdots \cap W_s)\) is at least the
  minimum of the colevels of the following cohomology groups:
  \begin{align*}
    \bigoplus_{|I|=s-a} \mathrm{H}^{i+a-1}_{c}\left(\bigcap_{\nu\in I}W_{\nu}\right),\quad a=1,2,\ldots,s.
  \end{align*}
\end{lemma}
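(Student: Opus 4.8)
The plan is to resolve the constant sheaf $\Lambda_{W}$ by the \v{C}ech complex attached to the closed cover $W=\bigcup_{\nu=1}^{s}W_{\nu}$, split off its single top-degree term (which turns out to be the pushforward of $\Lambda$ from $W_{1}\cap\dots\cap W_{s}$), and then read the asserted estimate off the resulting long exact sequence together with the hypercohomology spectral sequence of the remaining part. Throughout I will use two elementary facts about colevels, valid verbatim in both the Frobenius and the Hodge context: (i) in an exact sequence $A\to B\to C$ one has $\operatorname{colevel}(B)\geq\min\{\operatorname{colevel}(A),\operatorname{colevel}(C)\}$; and (ii) the condition ``colevel $\geq m$'' is inherited by subquotients and by extensions. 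In the Hodge case these follow from strictness of morphisms of mixed Hodge structures, which keeps $0\to F^{m}A\to F^{m}B\to F^{m}C\to 0$ exact; in the Frobenius case, from the facts that the characteristic polynomial of $F$ on an extension is the product of those on the sub and the quotient, and that the eigenvalues on a subquotient form a sub-multiset of those on the whole.

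First I would set up the resolution. For a nonempty subset $I\subset\{1,\dots,s\}$ write $W_{I}=\bigcap_{\nu\in I}W_{\nu}$, let $i_{I}\colon W_{I}\hookrightarrow W$ be the inclusion, and set $W_{\emptyset}=W$. Let $\mathcal{C}^{\bullet}$ be the complex of constructible sheaves on $W$ with $\mathcal{C}^{p}=\bigoplus_{|I|=p+1}(i_{I})_{*}\Lambda_{W_{I}}$ in degree $p$ for $0\leq p\leq s-1$ (the sum over subsets $I$ of $\{1,\dots,s\}$ of cardinality $p+1$), the differential being the usual alternating sum of restriction maps. At a point $x\in W$ the stalk complex is the augmented cochain complex of the full --- hence contractible --- simplex on the vertex set $\{\nu:x\in W_{\nu}\}$, which is nonempty precisely because $W=\bigcup_{\nu}W_{\nu}$; so it is acyclic, and the augmentation $\Lambda_{W}\to\mathcal{C}^{0}$ exhibits $\mathcal{C}^{\bullet}$ as a resolution of $\Lambda_{W}$. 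All maps here come from the standard functorialities, so in the arithmetic case this is a quasi-isomorphism of complexes of Weil sheaves and in the Betti case one of complexes of mixed Hodge modules, in either case compatibly with the relevant extra structure.

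Since the top term $\mathcal{C}^{s-1}$ is just $(i_{Z})_{*}\Lambda_{Z}$ with $Z=W_{1}\cap\dots\cap W_{s}$ (the only $I$ of cardinality $s$), the stupid truncation yields a distinguished triangle
\[
(i_{Z})_{*}\Lambda_{Z}[1-s]\longrightarrow\Lambda_{W}\longrightarrow\sigma_{\leq s-2}\mathcal{C}^{\bullet}\xrightarrow{+1}.
\]
Applying $\mathrm{R}\Gamma_{c}(W,-)$ and using $\mathrm{H}^{j}_{c}(W;(i_{Z})_{*}\Lambda_{Z}[1-s])=\mathrm{H}^{j-s+1}_{c}(Z)$, the long exact sequence contains the piece
\[
\mathrm{H}^{i+s-2}_{c}(W;\sigma_{\leq s-2}\mathcal{C}^{\bullet})\longrightarrow\mathrm{H}^{i}_{c}(Z)\longrightarrow\mathrm{H}^{i+s-1}_{c}(W),
\]
so by (i) it suffices to bound the colevels of the two outer groups. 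The right one, $\mathrm{H}^{i+s-1}_{c}(W)=\mathrm{H}^{i+s-1}_{c}(W_{\emptyset})$, is precisely the term indexed by $a=s$ (that is, $|I|=0$ and cohomological degree $i+a-1$) in the claimed minimum. For the left one I would invoke the hypercohomology spectral sequence of the stupid filtration on $\sigma_{\leq s-2}\mathcal{C}^{\bullet}$,
\[
E_{1}^{p,q}=\mathrm{H}^{q}_{c}(W;\mathcal{C}^{p})=\bigoplus_{|I|=p+1}\mathrm{H}^{q}_{c}(W_{I})\ \Longrightarrow\ \mathrm{H}^{p+q}_{c}(W;\sigma_{\leq s-2}\mathcal{C}^{\bullet}),\qquad 0\leq p\leq s-2,
\]
which by (ii) shows that $\mathrm{H}^{i+s-2}_{c}(W;\sigma_{\leq s-2}\mathcal{C}^{\bullet})$ has colevel at least the minimum, over $p+q=i+s-2$ with $0\leq p\leq s-2$, of $\operatorname{colevel}\bigl(\bigoplus_{|I|=p+1}\mathrm{H}^{q}_{c}(W_{I})\bigr)$. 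Reindexing by $|I|=s-a$, i.e.\ $p=s-1-a$ with $a=1,\dots,s-1$ and $q=i+a-1$, these are exactly the terms indexed by $a=1,\dots,s-1$ in the claimed minimum; combining the two estimates proves the lemma.

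The only delicate point is the bookkeeping of degree shifts --- getting the $[1-s]$ in the truncation triangle and the reindexing $|I|=s-a$, $q=i+a-1$ right, so that the cohomological degree attached to an $|I|=s-a$ intersection comes out as $i+a-1$ rather than, say, $i+a+1$. A check of the case $s=2$ against the ordinary Mayer--Vietoris sequence
\[
\cdots\to\mathrm{H}^{i}_{c}(W)\to\mathrm{H}^{i}_{c}(W_{1})\oplus\mathrm{H}^{i}_{c}(W_{2})\to\mathrm{H}^{i}_{c}(W_{1}\cap W_{2})\to\mathrm{H}^{i+1}_{c}(W)\to\cdots
\]
pins down all the conventions. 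Apart from that the argument is purely formal and runs identically in the $\ell$-adic/Frobenius and the Betti/Hodge settings.
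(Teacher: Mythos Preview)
Your proof is correct and is essentially the same as the paper's: both rest on the \v{C}ech/Mayer--Vietoris resolution attached to the closed cover $W=\bigcup W_\nu$. The paper works directly with the resulting spectral sequence, noting that the rightmost column $E_1^{s-1,\ast}=\mathrm{H}^{\ast}_c(W_1\cap\cdots\cap W_s)$ receives but emits no differentials and that $E_\infty^{s-1,\ast}$ sits inside the abutment $\mathrm{H}^{\ast+s-1}_c(W)$; your stupid-truncation triangle is just a repackaging of these same two facts, after which your hypercohomology spectral sequence for $\sigma_{\le s-2}\mathcal{C}^\bullet$ carries the identical information.
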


We remark that when \(a = s\), the intersection
\(\bigcap_{\nu\in I}W_{\nu}\), $|I|=s-a$, is an empty intersection,
which corresponds to the entire set \(W\).  So the boundary case $a=s$
of the statement corresponds to \(\mathrm{H}^{i+s-1}_{c}(W)\).

\begin{proof}
There is a first-quadrant Mayer--Vietoris spectral sequence
\begin{equation*}
E_1^{a,b} = \bigoplus_{|I|=a+1}\mathrm{H}^b_{c}\left(\bigcap_{\nu\in I}W_{\nu}\right) \Rightarrow \mathrm{H}_{c}^{a+b}(W)
\end{equation*}
associated to the finite proper hypercovering \(W = \bigcup W_{\nu}\).
In the \(\ell\)-adic context, the differentials preserve the
geometric Frobenius operation; in the Betti context, the
differentials are morphisms of mixed Hodge structures.

To explain the idea of the proof, we have drawn below a schematic
diagram, Figure~\ref{figure:spectral}. In the figure, the gray squares
represent the positions in the spectral sequence that may hit the
terms sitting in the red square, which are $E^{s-1,j}_{\bullet}$.  A
certain subquotients of the first-page items sitting in the gray
squares will span a linear subspace of \(E^{s-1,j}_{1}\).  Dividing
by this subspace gives a quotient space of \(E_{1}^{s-1,j}\), which
equals \(E_{\infty}^{s-1,j}\) and is also a quotient of the cohomology
of \(W\).  Therefore, to estimate the lower bound of the colevel of
the first-page item at the red square, that is
\(E_{1}^{s-1,\ast}=\mathrm{H}_{c}^{\ast}(\bigcap_{\nu=1}^{s}W_{\nu})\),
we only need some lower bounds of the colevels of the first-page items
sitting in the gray squares, and a lower bound of the colevel of
\(\mathrm{H}^{s-1+\ast}_{c}(W)\).

\begin{figure}[ht!]
\centering
\begin{tikzpicture}
[
scale=0.8,
box/.style={rectangle,draw=gray,thick, minimum size=.8cm},
]

\foreach \x in {1,2,...,5}{
  \foreach \y in {0,1,...,5}
  \node[box] at (\x,\y){};
}
\node[box,fill=red!60] at (5,2){};
\node[box,fill=gray!60] at (4,2){};
\node[box,fill=gray!60] at (3,3){};
\node[box,fill=gray!60] at (2,4){};
\node[box,fill=gray!60] at (1,5){};
\node at (6.2,2) {$E^{s-1,j}_{\bullet}$};
\node [gray] at (0,5) {$E^{s-5,j+3}_{\bullet}$};
\node [gray] at (1,4) {$E^{s-4,j+2}_{\bullet}$};
\draw[->] (1,5.2) -- (5,2.3) node[midway,above,sloped] {\footnotesize \(d_4\)};
\draw[->] (2,4.2) -- (5,2.2) node[midway,below,sloped] {\footnotesize \(d_3\)};
\draw[->] (3,2.8) -- (5,2.1) node[midway,below,sloped] {\footnotesize \(d_2\)};
\draw[->] (4,2) -- (5,2) node[midway,below] {\footnotesize \(d_1\)};
\end{tikzpicture}
\caption{The spectral sequence}
\label{figure:spectral}
\end{figure}
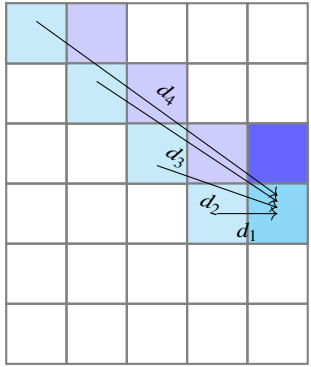

Now we give the formal proof.  The cohomology group
\(\mathrm{H}^{\ast}_{c}(W_1 \cap \cdots \cap W_s)\) in
question equals \(E_1^{s-1,\ast}\) (indicated by the red box).  As it
is in the right-most column of the spectral sequence, all the
differentials out of it are zero.  It follows that
\(E_{\infty}^{s-1,\ast}\) is a quotient of \(E_{1}^{s-1,\ast}\).  We
have
\begin{equation*}
E_2^{s-1,\ast} = E_1^{s-1,\ast}/d_1(E_1^{s-2,\ast}),\;
\ldots,\; E_{a+1}^{s-1,\ast} = E_{a}^{s-1,\ast}/d_{a}(E_a^{s-a-1,\ast+a-1}), \; \ldots,
\end{equation*}
and
\(E_{\infty}^{s-1,\ast} = E_{s}^{s-1,\ast}\).
Since each item \(E_{a}^{s-a-1,\ast+a-1}\) is a subquotient of
\[E_1^{s-a-1,\ast+a-1}=\bigoplus_{|I|=s-a}\mathrm{H}^{\ast+a-1}\left(\bigcap_{\nu\in I}W_{\nu}\right)\]
(indicated by the gray boxes),
it follows that the colevel of the kernel of \(E^{s-1,\ast}_{1} \to E_{\infty}^{s-1,\ast}\)
is at least the minimum of
\begin{equation*}
\bigoplus_{|I|=s-a} \mathrm{H}^{\ast+a-1}_{c}\left(\bigcap_{\nu\in I}W_{\nu}\right),\quad a=1,2,\ldots,s-1.
\end{equation*}
To complete the proof, it remains to bound the colevel of \(E_{\infty}^{s-1,\ast}\).
Since \(E_{\infty}^{s-1,\ast}\) is a subspace of
the abutment \(\mathrm{H}^{\ast+s-1}_{c}(W)\) (the
(\(s-1\))\textsuperscript{st} column is the right-most column), its colevel
is at least the colevel of \(\mathrm{H}^{\ast+s-1}_{c}(W)\).
This completes the proof.
\end{proof}

We also need the following crucial algebraic lemma.  It allows us to
rebase the problem from \(\mathbb{A}^N\) to a set-theoretic complete
intersection of dimension \(n\).  Since the proof is a bit lengthy, we
shall not reproduce it here. The reader is referred to
\cite[Lemma~5.1]{wz22}.

\begin{lemma}%
  \label{lemma:key}
  Let \(k\) be a field.
  Let \(f_1,\ldots,f_r \in k[x_1,\ldots,x_N]\) be a collection
  of polynomials. Assume that \(\deg f_1 \geq \cdots \geq \deg f_r\).
  Let \(X\) be the affine variety defined by
  \(f_1 = \cdots = f_r=0\) in \(\mathbb{A}^{N}\).
  Then, upon replacing \(k\) by a finite extension, there exists
  a new sequence of polynomials \(g_1, \ldots, g_r \in k[x_1,\ldots,x_N]\)
  such that
  \begin{enumerate}
  \item \(X = \{g_1 = \cdots = g_r = 0\}\),
  \item \(\deg g_i \leq \deg f_i\),
  \item the dimension of the variety \(V\) defined by \(\{g_1 = \cdots = g_{N-\dim X}=0\}\)
    is \(\dim X\).
  \end{enumerate}
\end{lemma}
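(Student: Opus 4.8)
We may assume $X\neq\emptyset$, and (a zero $f_i$ is carried along with $g_i=0$, while a nonzero constant among the $f_i$ forces $X=\emptyset$) that every $f_i$ is nonconstant; write $d_i=\deg f_i$, so $d_1\geq\cdots\geq d_r\geq1$, and $n=\dim X$, so that $N-n\leq r$. The plan is to take the $g_i$ to be \emph{generic linear combinations} of the $f_j$, restricted only so as not to raise degrees. Partition $\{1,\dots,r\}$ into the blocks on which $d_i$ is constant, ordered by decreasing degree, and let $C=(c_{ij})$ be a matrix of indeterminate scalars that is \emph{block upper triangular} for this partition (that is, $c_{ij}=0$ whenever $d_j>d_i$); put $g_i=\sum_{j:\,d_j\leq d_i}c_{ij}f_j$. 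Then $\deg g_i\leq d_i=\deg f_i$ for every $C$, so condition~(2) is automatic, and for generic $C$ each diagonal block is invertible, hence $C$ is invertible over $k$ and $(g_1,\dots,g_r)=(f_1,\dots,f_r)$ as ideals, giving condition~(1). Everything therefore reduces to showing that, for generic $C$, the variety $V=\{g_1=\cdots=g_{N-n}=0\}$ has dimension $n$; since $V\supseteq X$, only an upper bound on $\dim V$ is at stake.

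I would obtain this by proving, by induction on $k=0,1,\dots,N-n$, that for generic $C$ the partial intersection $V_k=\{g_1=\cdots=g_k=0\}$ has dimension exactly $N-k$ (the case $k=0$ being trivial, $k=N-n$ the goal). For the step, assume $\dim V_k=N-k$ and let $Z_{k+1}=\{f_j=0:d_j\leq d_{k+1}\}$ be the common zero locus of the polynomials eligible to appear in $g_{k+1}$. The crux of the whole argument is the identity
\[
V_k\cap Z_{k+1}=X .
\]
Granting it, $\dim(V_k\cap Z_{k+1})=n\leq N-k-1$, so no component of $V_k$ of dimension $N-k$ is contained in $Z_{k+1}$; since a generic element of the linear system spanned by the eligible $f_j$ vanishes on an irreducible set $W$ only when $W\subseteq Z_{k+1}$, a generic $g_{k+1}$ meets every top-dimensional component of $V_k$ properly, so $\dim V_{k+1}\leq N-k-1$. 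On the other hand $V_{k+1}$ is cut out by $k+1$ equations and contains the nonempty set $X$, so Krull's principal ideal theorem gives $\dim V_{k+1}\geq N-k-1$; hence $\dim V_{k+1}=N-k-1$, which closes the induction.

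To prove the identity (the inclusion $\supseteq$ being clear), I would restrict everything to $Z_{k+1}$: there every $f_j$ with $d_j\leq d_{k+1}$ vanishes, so $g_i|_{Z_{k+1}}$ involves only the $f_j|_{Z_{k+1}}$ with $d_j>d_{k+1}$. In particular $g_i|_{Z_{k+1}}=0$ whenever $i\leq k$ and $d_i=d_{k+1}$; and for the remaining indices — those $i\leq k$ with $d_i>d_{k+1}$, i.e.\ $i\leq t$ where $t$ is the last index with $d_t>d_{k+1}$ — the tuple $(g_i|_{Z_{k+1}})_{i\leq t}$ is the image of $(f_i|_{Z_{k+1}})_{i\leq t}$ under the top-left $t\times t$ submatrix of $C$, which is again block upper triangular with invertible diagonal blocks because $\{1,\dots,t\}$ is a union of whole blocks. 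Hence $V_k\cap Z_{k+1}=\{f_1=\cdots=f_t=0\}\cap Z_{k+1}$, and since $\{1,\dots,t\}\cup\{j:d_j\leq d_{k+1}\}=\{1,\dots,r\}$ this equals $\{f_1=\cdots=f_r=0\}=X$.

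Finally, each of conditions (1), (2), (3) holds on a nonempty Zariski-open subset of the affine space of matrices $C$; their intersection is a nonempty open set defined over $k$, which has a point over some finite extension $k'$ of $k$ — this is the source of the clause ``upon replacing $k$ by a finite extension'' — and any such point yields the desired $g_1,\dots,g_r$. The one genuine obstacle in this scheme is the identity $V_k\cap Z_{k+1}=X$: it is what makes the block-triangular shape of $C$ (forced on us by the degree bound, since allowing the high-degree $f_j$ into $g_i$ for small $i$ would violate $\deg g_i\leq d_i$) nonetheless sufficient for the dimension estimate, by recording that the high-degree $f_j$ already used in $g_1,\dots,g_k$ recombine, after restriction to $Z_{k+1}$, into exactly the missing equations $f_1,\dots,f_t$.
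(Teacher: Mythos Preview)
The paper does not actually give a proof of this lemma; it explicitly defers to \cite[Lemma~5.1]{wz22} with the remark that ``the proof is a bit lengthy''. So there is nothing in the present paper to compare against.

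Your argument is correct and is the natural one: take the $g_i$ to be generic linear combinations of the $f_j$ constrained only by the degree bounds (forcing the block-triangular shape), and show by induction that each successive $g_{k+1}$ drops the dimension by one. The heart of the matter is exactly the identity $V_k\cap Z_{k+1}=X$, and your justification via the invertibility of the top-left $t\times t$ block of $C$ (with $t$ the last index of strictly larger degree) is sound, including the boundary case $t=0$. The remaining steps---that a generic member of a linear system misses any irreducible set not in its base locus, and the lower bound from Krull's height theorem in the regular ring $k[x_1,\dots,x_N]$---are standard. The passage to a finite extension of $k$ at the end is handled correctly. This is almost certainly the same argument as in the cited reference.
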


\begin{proof}[Proof of Theorem \ref{theorem:beyond}]
Recall the definition of the numbers
\(\nu_{j}^{(e)}(N;d_{1},\ldots,d_{r})\) in
Definition~\ref{definition:nu}.  By Lemma~\ref{lemma:key}, we can
replace \(f_{i}\) by \(g_{i}\) without changing \(X\). Since
\(\nu^{(e)}_{j}\) is decreasing in \(d_{j}\)
(Lemma~\ref{lemma:properties-of-nu}(\ref{property-2})), the inequality
``colevel \(\geq \nu_{j}^{(e)}(\deg g_{1},\ldots,\deg g_{r})\)'' is
stronger than the desired inequality.  For this reason, we may replace
\(f_{i}\) by \(g_{i}\), and it suffices prove the theorem under the
following hypothesis:
\begin{equation}
\label{eq:good-case}
\text{The variety \(V\) defined by \(f_1 = \cdots  = f_{N-n} = 0\)
  has dimension \(n\).}
\end{equation}
Under hypothesis~(\ref{eq:good-case}), \(X\) is a subvariety of \(V\)
defined by \(r-(N-n)\) ``extra equations''; these equations do not
drop the dimension: \(\dim X = \dim V\).

We shall prove the theorem by induction on \(r-(N-n)\), the number of
extra equations needed to cut out \(X\) in \(V\).  When this number is
\(0\), the theorem is Lemma~\ref{lemma:complete-intersection-bound}.
Assume now there exist \(r-(N-n)>0\) ``extra equations''.  The theorem
is already true if $r-(N-n)=1$ by
Lemma~\ref{lemma:almost-complete-intersection-colevel}.  But, we will
need the more general form of
Lemma~\ref{lemma:almost-complete-intersection-colevel} in the
induction below.

For \(i \in \{N-n+1,\ldots,r\}\), let \(X_i = V \cap \{f_{i}=0\}\).
Then \(X = X_{N-n+1} \cap \cdots \cap X_{r}\).  In view of
Lemma~\ref{lemma:spectral-sequence} (with \(s=r-(N-n)\)), it suffices
to prove
\begin{quote}
\textit{for each \(I \subset \{N-n+1,\ldots,r\}\)
  such that \(|I| = r-(N-n)-a\), \(1 \leq a\leq r-(N-n)\), the colevel of
  \(\mathrm{H}^{n+j+a-1}_{c}\left( \bigcap_{i \in I} X_i \right)\)
  is at least \(\nu^{(N-n)}_j(N;d_1,\ldots,d_r)\).}
\end{quote}

\medskip
\textbf{\textit{Situation I.}} \(1 \leq a \leq r-(N-n)-1\).

This case is handled by the inductive hypothesis.  The intersection
\(\bigcap_{i\in I}X_i\) is a subvariety of \(V\) of dimension \(n\),
which is cut out by \(r-(N-n)-a<r-(N-n)\) equations.  The
inductive hypothesis implies that
\(\mathrm{H}^{n+j+a-1}_{c}(\bigcap_{i\in I} X_i)\) has
colevel \(\geq \nu^{(N-n)}_{j+a-1}(d_1,\ldots,d_{r-a})\).  By
Lemma~\ref{lemma:properties-of-nu}(\ref{property-4}), this number is
\(\geq \nu_{j}^{(N-n)}(N;d_1,\ldots,d_r)\).

\medskip
\textbf{\textit{Situation II.}} \(a = r-(N-n)\).
This is the case when the intersection is empty.
Let \(W = X_{N-n+1} \cup \cdots \cup X_{r}\).
We need to show the colevel of
\(\mathrm{H}^{n+j+r-(N-n)-1}_{c}(W)\) is at least
\[\nu_{j}^{(N-n)}(N;d_1,\ldots,d_r).\]
This is handled by
Lemma~\ref{lemma:almost-complete-intersection-colevel}: The variety
\(W\) is defined by the equation \(f_{N-n+1}\cdots f_r = 0\), and
satisfies the hypothesis of the lemma.  Hence the colevel of
\(\mathrm{H}_{c}^{n+j+r-(N-n)-1}(X)\) is at least
\[
\nu_{j+r-(N-n)-1}^{(r-1)}(N;d_1,\ldots,d_r).
\]
Since $r> 0$, we have $N-n > 0$. It follows that $r-1 > r - (N-n)- 1$.
We can then apply
Lemma~\ref{lemma:properties-of-nu}(\ref{property-3}), and conclude
that
\begin{align*}
  \nu_{j+r-(N-n)-1}^{(r-1)}(N;d_1,\ldots,d_r)
  & \geq \nu_{j}^{(r-1-(r-(N-n)-1))}(N;d_1,\ldots,d_r)\\
  & =\nu_j^{(N-n)}(N;d_1,\ldots,d_r).
\end{align*}
This completes the proof.
\end{proof}

\begin{proof}[Proof of Theorem \ref{theorem:before}]
As before, without loss of generality, we may assume the additional
hypothesis~(\ref{eq:good-case}) holds.  We shall prove by induction
on \(r-(N-n)\), the number of ``extra equations'' that are needed to
cut out \(X\) from the complete intersection $V$.

The base case is when \(N-n = r\), i.e., when \(X=V\)
is a set-theoretic complete intersection.
In this case, Theorem~\ref{theorem:before} falls back
to Lemma~\ref{lemma:complete-intersection-bound}.

Now assume \(r > N-n\).  If \(m=0\), then we are reduced to the
theorem of Esnault and Katz
(Theorem~\ref{theorem:previous}(\ref{item-1a})).  When \(m > 0\), by
Lemma~\ref{lemma:spectral-sequence} (\(s=r-(N-n)\)), it suffices to
show, that
\begin{quote}
\emph{for each subset \(I\) of \(\{N-n+1,\ldots,r\}\) such that
  \(|I| = r-(N-n)-a\) for some \(1 \leq a \leq r-(N-n)\),
  and each integer \(m\) such that \(0\leq m \leq r-(N-n)\),
  the colevel of \(\mathrm{H}^{N-r+m+a-1}_{c}(\bigcap_{i\in I} X_i)\)
  is at least \(\nu_0^{(r-m)}(N;d_1\ldots,d_r)\).}
\end{quote}

Again, this is an elementary consequence of
Lemma~\ref{lemma:properties-of-nu}.

\medskip%
\textbf{\textit{Situation I.}} \(1 \leq a \leq r-(N-n)-1\).

There are two subcases:
\begin{enumerate}
\item   \(N-r+m+a-1 < n\), i.e.,
when the cohomological degree in question is less than \(n\);  and
\item   \(N-r+m+a-1 \geq n\), i.e., when the cohomological degree
is larger than or equal to \(n\).
\end{enumerate}

Case (1) is handled by induction.
In this case, each \(\bigcap_{i\in I} X_i\) is cut out in
\(\mathbb{A}^{N}\) by \((r-a)\) total defining equations, and there
are \((r-a)-(N-n)\) ``extra equations''.  Write
\(N-r+m+a-1 = N - (r-a) +m-1\).  The inductive hypothesis shows that
the colevel of
\(\mathrm{H}^{N-r+m+a-1}_{c}(\bigcap_{i\in I}X_i)\) is at
least \(\geq \nu_{0}^{((r-a)-(m-1))}(N;d_1,\ldots,d_{r-a})\).  By
Lemma~\ref{lemma:properties-of-nu}(\ref{property-2}), this number is
\(\geq \nu_0^{(r-m)}(N;d_1,\ldots,d_{r-a})\).  This latter number is
trivially \(\geq \nu_0^{(r-m)}(N;d_1,\ldots,d_{r})\).

Case (2) is handled by Theorem \ref{theorem:beyond}.  Since
\(N-r+m+a-1 \geq n\), we write it as \(n + (N-n-r+m+a-1)\).  By
Theorem~\ref{theorem:beyond}, a lower bound of the colevel of
\(\mathrm{H}^{N-r+m+a-1}_{c}(\bigcap_{i\in I} X_i)\) is
\(\nu_{N-n-r+m+a-1}^{(N-n)}(N;d_1,\ldots,d_{r-a})\).  By
Lemma~\ref{lemma:properties-of-nu}(\ref{property-3}), this number is
\(\geq \nu_{0}^{(r-m-a+1)}(N;d_1,\ldots,d_r)\).  Since \(0 \geq 1-a\),
and since \(r \geq r - (N-n) \geq m\) by assumption,
\(r-m-a+1 \geq 1\).  Thus,
Lemma~\ref{lemma:properties-of-nu}(\ref{property-2}) implies that the
above number is \(\geq \nu_0^{(r-m)}(N;d_1,\ldots,d_r)\).

\medskip
\textbf{\textit{Situation II.}} \(a=r-(N-n)\).

This is when the intersection is empty, so we need to show the colevel
of \(\mathrm{H}^{n+m-1}_{c}(\bigcup X_i)\) is at least
\(\nu_0^{(r-m)}(N;d_1,\ldots,d_r)\).  When \(m = 0\), this is the
Ax--Katz bound, so we are reduced to
Theorem~\ref{theorem:previous}(\ref{item-1a}).  If \(m \geq 1\), by
Lemma~\ref{lemma:almost-complete-intersection-colevel}, the colevel of
\(\mathrm{H}^{n+m-1}_{c}(\bigcup X_i)\) is at least
\(\nu_{m-1}^{(r-1)}(N;d_1,\ldots,d_r)\).  By
Lemma~\ref{lemma:properties-of-nu}(\ref{property-3}), this number is
\(\geq\nu_{0}^{((r-1)-(m-1))}(N;d_1,\ldots,d_r)=\nu_0^{(r-m)}(N;d_1,\ldots,d_r)\),
as desired.  This completes the proof.
\end{proof}

\begin{proof}[Proof of Theorem~\ref{theorem:projective-case}]
It suffices to prove the claim about cohomology of \(Y\); the claim
for \(\mathbb{P}^{N}\setminus Y\) follows from the usual long exact
sequence for relative cohomology. We shall prove the theorem by
induction on \(n = \dim Y\).  The base case (\(n=0\)) is trivial. If
\(n > 0\) and \(n = N\), then the theorem is also trivial. So we may
assume \(0 < n < N\).

Let \(\widehat{Y} \subset \mathbb{A}^{N+1}\) be the affine cone of
\(Y\). Let \(L \to Y\) be the total space of the ``tautological line
bundle'' of \(\mathbb{P}^{N}\) restricted to \(Y\). Thus, \(Y\) embeds
into \(L\) as the zero section, whose complement can be identified with
\(\widehat{Y}\setminus O\).   We have a long exact sequence
\begin{equation}\label{eq:wang}
\cdots \to \mathrm{H}^{i}_{c}(L)
\xrightarrow{u} \mathrm{H}^{i}(Y) \to
\mathrm{H}^{i+1}_{c}(\widehat{Y}\setminus O) \to
\mathrm{H}^{i+1}_{c}(L) \to \cdots
\end{equation}
Since the fibers of \(L\to Y\) are \(\mathbb{A}^1\), its Leray
spectral sequence degenerates into an isomorphism
\(\mathrm{H}^{i}_{c}(L) \simeq
\mathrm{H}^{i-2}(Y;\Lambda(-1))\).  Under this isomorphism, the
mapping \(u\) is identified with the cup product with \(c_1\) mapping
\begin{equation*}
\smile c_1(\mathcal{O}_Y(-1))\colon
\mathrm{H}^{i-2}(Y;\Lambda(-1)) \to \mathrm{H}^i(Y;\Lambda).
\end{equation*}

Moreover, for any \(i \geq 1\), the natural morphism
\[
\mathrm{H}^{i+1}_{c}(\widehat{Y}\setminus O)
\xrightarrow{\sim}  \mathrm{H}^{i+1}_{c}(\widehat{Y})
\]
is an isomorphism.  Therefore, for \(i \geq 1\), the exact sequence
\eqref{eq:wang} can be rewritten as
\begin{equation}\label{eq:wang-bis}
\cdots \to \mathrm{H}^{i-2}(Y;\Lambda(-1))_{\mathrm{prim}}
\xrightarrow{\smile c_1(\mathcal{O}_Y(1))}
\mathrm{H}^i(Y)_{\mathrm{prim}} \to \mathrm{H}^{i+1}_{c}(\widehat{Y}) \to \cdots.
\end{equation}
Here, we have made two small modifications.  First, we have decided to take
the cup product with \(c_{1}(\mathcal{O}_{Y}(1))=-c_{1}(\mathcal{O}_{Y}(-1))\).
Obviously this does not affect the exactness.  Second, we have taken the
primitive part.  This modification preserves exactness because the non-primitive
classes are precisely the powers of \(c_{1}(\mathcal{O}_{Y}(1))\).

By Theorems~\ref{theorem:beyond} and \ref{theorem:before},
\begin{itemize}
\item if \(i = N-r+m\) for some \(0\leq m \leq r - (N-n)\), then the
colevel of \(\mathrm{H}^{i+1}_{c}(\widehat{Y})\) is at least
\(\nu^{(r-m)}_0(N+1;d_1,\ldots,d_r)\);
\item if \(i = n + j\), for some \(j \geq 1\), then the colevel of
\(\mathrm{H}^{i+1}_{c}(\widehat{Y})\) is at least
\(\nu^{(N-n)}_j(N+1;d_1,\ldots,d_r)\).
\end{itemize}
Thus, by the exactness of \eqref{eq:wang-bis}, it suffices to prove
that the colevel of the image of the mapping
\begin{equation}\label{eq:chern}
\mathrm{H}^{i-2}(Y;\Lambda(-1))_{\mathrm{prim}} \xrightarrow{\smile c_1(\mathcal{O}_Y(1))} \mathrm{H}^i(Y)_{\mathrm{prim}}
\end{equation}
is at least
\begin{itemize}[left=2.5em]
\item [(a)] \(\nu_0^{(r-m)}(N+1;d_1,\ldots,d_r)\), if \(i = N-r + m\)
for some \(1\leq m \leq r- (N-n)\) (the \(m=0\) case already being taken
care of by Theorem~\ref{theorem:previous}(\ref{item-2a})), and
\item [(b)] \(\nu_j^{(N-n)}(N+1;d_1,\ldots,d_r)\), if \(i = n + j\) for some
\(j \geq 1\).
\end{itemize}

Let \(A\) be any hyperplane in \(\mathbb{P}^N\).  We claim that the
cup product mapping be factored as
\begin{equation}
\label{eq:factorization-chern}
\begin{tikzcd}
\mathrm{H}^{i-2}(Y;\Lambda(-1)) \ar[r]
\ar[rr,bend right=25,"\smile c_1(\mathcal{O}_Y(1))"] &
\mathrm{H}^{i-2}(Y \cap A ; \Lambda(-1)) \ar[r] & \mathrm{H}^i(Y)
\end{tikzcd}
\end{equation}
where the left arrow is the restriction map.  For the moment, let us
grant the existence of the factorization
\eqref{eq:factorization-chern}, and see how it implies the theorem.
Since \(A\) can be arbitrarily chosen, we shall choose it so that
\(\dim Y \cap A = \dim Y - 1 = n - 1\).  Viewing \(A\) as an \(N-1\)
dimensional projective space, we can apply the inductive hypotheses to
the \((n-1)\)-dimensional scheme \(Y \cap A\).  Therefore, the colevel
of \(\mathrm{H}^{i-2}(Y \cap A ; \Lambda(-1))_{\mathrm{prim}}\) is at
least
\begin{itemize}[left=2.5em]
\item [($\alpha$)] \(1 + \nu_0^{(r-(m-1))}(N+1;d_1,\ldots,d_r)\) if
\(i = N-r+m\) (that is, \(i-2 = (N-1) -r + (m-1)\)), for
\(1\leq m \leq r- (N-n)\);
\item [(\(\beta\))]\(1 + \nu_{j-1}^{((N-1)-(n-1))}(N;d_1,\ldots,d_r)\), if
\(i = n+j\) (that is, \(i-2 = (n-1) + (j-1)\)), \(j\geq 1\).
\end{itemize}
The induction proof is concluded by observing the following elementary
facts:
\begin{itemize}[left=2.5em]
\item [(0)] \eqref{eq:factorization-chern} gives a factorization
\[
\mathrm{H}^{i-2}(Y;\Lambda(-1))_{\mathrm{prim}}
\to \mathrm{H}^{i-2}(Y\cap A;\Lambda(-1))_{\mathrm{prim}} \to
\mathrm{H}^{i}(Y)_{\mathrm{prim}};
\]
of \eqref{eq:chern}
\item [(\(1_{e}\))]
\(1 + \nu_{0}^{(e+1)}(N;d_{1},\ldots,d_{r}) \geq \nu_{0}^{(e)}(N+1;d_{1},\ldots,d_{r})\),
for any \(e \in \mathbb{Z}\), and
\item [(\(2_{e}\))]
\(1 + \nu^{(e)}_{j-1}(N;d_{1},\ldots,d_{r}) =\nu_{j}^{(e)}(N+1;d_{1},\ldots,d_{r})\)
for any \(j \geq 1\).
\end{itemize}

Indeed, given Item (0), (\(\alpha\)) and ($1_{r-m}$) imply (a), and
($\beta$) and (\(2_{N-n}\)) imply (b).  Item (0) is straightforward,
and Item (2) can be seen simply by expanding the definition.  To prove
(1), we note that by (2), we have
\(1 + \nu_{0}^{(e+1)}(N;d_{1},\ldots,d_{r}) =
\nu_{1}^{(e)}(N;d_{1},\ldots,d_{r})\).  Then (1) follows from
Lemma~\ref{lemma:properties-of-nu}(\ref{property-3}).
\end{proof}

It remains to explain the existence of the factorization
\eqref{eq:factorization-chern}.

\begin{proof}[Existence of \eqref{eq:factorization-chern}]
The Chern class
\(c_1(\mathcal{O}_{\mathbb{P}^N}(1)) \in \mathrm{H}^2(\mathbb{P}^N;\Lambda(1))\)
can be regarded as a mapping, in the derived category of constructible
complexes of \(\Lambda\)-sheaves
\begin{equation}\label{eq:sheaf-level-cup-chern-pn}
\Lambda_{\mathbb{P}^N}(-1)[-2] \xrightarrow{\smile c_1(\mathcal{O}_{\mathbb{P}^N}(1))} \Lambda_{\mathbb{P}^N}.
\end{equation}

Let \(A\) be a hyperplane in \(\mathbb{P}^N\), and let \(\iota\colon A \to \mathbb{P}^N\) be
the inclusion morphism.  The relative purity theorem implies that
\(\iota^! \Lambda_{\mathbb{P}^N} = \Lambda_{A}(-1)[-2]\).  From this, we obtain a
``Gysin morphism'' \(\iota_\ast \Lambda_A(-1)[-2] \xrightarrow{\mathrm{Gys}}
\Lambda_{\mathbb{P}^N}\) arising from the counit morphism \(\iota_\ast\iota^!
\Lambda_{\mathbb{P}^N} \to \Lambda_{\mathbb{P}^N}\).  The mapping
\eqref{eq:sheaf-level-cup-chern-pn} can be identified with
\begin{equation}
\label{eq:sheaf-level-factorization-pn}
\begin{tikzcd}
\Lambda_{\mathbb{P}^N}(-1)[-2] \ar[r] \ar[rr,bend right=25,"\smile c_1(\mathcal{O}_{\mathbb{P}^N}(1))"] &
\iota_\ast  \Lambda_A (-1) [-2]
\ar[r,"\mathrm{Gys}"] &
\Lambda_{\mathbb{P}^N}.
\end{tikzcd}
\end{equation}
This identification holds because the cycle class of \(A\) coincides with
\(c_{1}(\mathcal{O}_{\mathbb{P}^{N}}(1))\).

Since taking the Chern class is compatible with the restriction of
invertible sheaves, applying the \(\ast\)-inverse image to
\eqref{eq:sheaf-level-cup-chern-pn} with respect to
\(Y \hookrightarrow \mathbb{P}^{N}\) gives rise to the Chern class of
\(\mathcal{O}_{Y}(1)\).  By restricting
\eqref{eq:sheaf-level-factorization-pn} to \(Y\) and applying the
proper base change theorem, we obtain the following factorization:
\begin{equation}
\label{eq:sheaf-level-factorization-y}
\begin{tikzcd}
\Lambda_{Y}(-1)[-2] \ar[r] \ar[rr,bend right=25,"\smile c_1(\mathcal{O}_{Y}(1))"] &
\iota^{\prime}_\ast  \Lambda_{Y\cap A} (-1) [-2]
\ar[r,"\mathrm{Gys}"] &
\Lambda_{Y},
\end{tikzcd}
\end{equation}
where \(\iota^{\prime}\colon Y\cap A \to Y\) denotes the inclusion morphism.

Applying \(R\Gamma(Y;-)\) to the sheaf-theoretic factorization
\eqref{eq:sheaf-level-factorization-y} gives the cohomological
factorization \eqref{eq:factorization-chern}, as claimed.
\end{proof}

\begin{remark}
A version of this proof appeared in \cite{wz22}. However, the argument
there contained a gap: it incorrectly claimed that the image of $u$ in
\eqref{eq:wang} was always ``non-primitive''.  The proof presented
above corrects this oversight: although the image of $u$ may indeed be
primitive, it arises as the Gysin image of a hyperplane section, whose
colevel can be controlled through induction. We thank an anonymous
referee for bringing this issue to our attention.
\end{remark}

\section{Two examples}

We end with two ``determinantal'' examples illustrating how the new bounds
provided by Theorems~\ref{theorem:beyond} and \ref{theorem:before} are sharper
than the previously known (or anticipated) bounds.

\begin{example}%
\label{example:det-1}
Let \(e\) be a positive integer \(\geq 4\).
Let \(N = e^{2}\). View \(\mathbb{A}^N\) as the space of \(e\times e\)-matrices.
Consider the following variety
\begin{equation*}
X = \{A \in \mathbb{A}^{N}:  \text{ all principal } (e-1) \text{-minors are zero}\}.
\end{equation*}
Then \(X\) is defined by the vanishing of \(r = e\) equations of degree \(e-1\).
The Ax--Katz bound implies that the colevels of \(\mathrm{H}^{\ast}_{c}(X)\) are \(\geq 2\).

According to Wheeler~\cite[Theorem 4]{wheeler},  \(X\) has two irreducible
components: \(X = X_{1} \cup X_{2}\), where \(X_1 = \{A \in \mathbb{A}^{N}:\operatorname{rank}A\leq e-2\}\),
\(X_{2}\) is the Zariski closure of \(\{A \in X: \det A \neq 0\}\).
Moreover, \(\dim X_2 = N-e\). On the other hand, it is
well-known that \(\dim X_{1} = e^2 - 4\) (\cite[p.~67, Proposition]{acgh}). Thus the dimension of \(X\) is \(n=e^2-4\).
The affine variety $X$ in $\mathbb{A}^N$ is a complete intersection if $e=4$, not a complete intersection if $e\geq 5$.

The improved bounds, Theorem~\ref{theorem:beyond} and
\ref{theorem:before}, give the following:
\begin{itemize}
\item (before middle dimension) for \(0\leq m \leq e-4\), the colevel
of \(\mathrm{H}^{e^2-e+m}_{c}(X)\) is at least
\(\left\lceil m+1 - \frac{m-1}{e-1} \right\rceil\) which is $m+1$ if
$1\leq m\leq e-4$, and $2$ if $m=0$ (Theorem~\ref{theorem:previous}
only shows that these colevels are \(\geq 2\));
\item (beyond middle dimension) for \(0\leq j \leq e^{2}-4\), the
colevel of \(\mathrm{H}^{e^2-4+j}_{c}(X)\) is at least
\(j+\max\left\{0,e-4+\left\lceil \frac{4-j}{e-1}
\right\rceil\right\}\).
When \(e \geq 5\), this latter value is $j+ e-3$ if
$0\leq j\leq 3$, $j+e-4$ if $4\leq j \leq e+2$, $j+e-5$ if
$e+3\leq j \leq 2e+1$, etc.  By contrast, the colevel lower bound
anticipated by Esnault and the first author
(Theorem~\ref{theorem:positive}) is
\(j + \max\left\{ 0, \left\lceil
\frac{e-j}{e-1}\right\rceil\right\}\), which is $j+2$ if $j =0$,
\(j+1\) if $1\leq j \leq e-1$ and \(j\) if $j\geq e$.
\end{itemize}
\end{example}

\begin{example}%
\label{example:det-2}
Notation as Example~\ref{example:det-1}, consider
\begin{equation*}
X^{\prime} = \{A \in \mathbb{A}^N: \operatorname{rank} A \leq e-1, \text{ all principal } (e-1) \text{-minors are zero}\}.
\end{equation*}
Then \(X^{\prime}\) is cut out by \(r=e+1\) equations: the vanishing
of the \(e\) principal minors (degree \(e-1\) polynomials),
and \(\det A = 0\) (a degree \(e\) polynomial).
Thus \(d_1 = e\), \(d_2 = \ldots = d_{r} = e-1\).

From Wheeler's theorem cited above it is easy to see that
we can write \(X^{\prime} = X_{1} \cup X^{\prime}_{2}\), where \(X^{\prime}_{2} = X_{2} \cap \{\det = 0\}\).
Thus \(X^{\prime}_{2}\) has codimension \(e+1\geq 5\) in \(\mathbb{A}^{N}\).
It follows that $X^{\prime}$ has codimension $4$ in \(\mathbb{A}^{N}\).

Since \(\frac{N-\sum d_{i}}{d_{1}} = 0\),
the bounds given by Theorem~\ref{theorem:positive}  do not
improve (and is the same as) Deligne's bounds, Theorem~\ref{theorem:deligne}.
By contrast, Theorems~\ref{theorem:beyond} and \ref{theorem:before} give the following better lower bounds.
\begin{itemize}
\item Before middle dimension, for \(0 \leq m \leq e - 3\),  we have:
\begin{equation*}
\text{colevel of } \mathrm{H}^{e^2-(e+1)+m}_{c}(X^{\prime}) \geq
\left\lceil e-1  - \frac{e-1}{e}(e-m)\right\rceil =  \left\lceil \frac{(e-1)m}{e}\right\rceil.
\end{equation*}
The Esnault--Katz colevel lower bound in this case is $0$.
\item Beyond middle dimension, for $0\leq j \leq e^2-4$, we have
\begin{align*}
  \text{colevel of }\mathrm{H}_{c}^{e^{2}-4+j}(X^{\prime})
  &\geq  j + \max \left\{ 0 , \left\lceil \frac{e^{2}-4e+3-j}{e}\right\rceil\right\} \\
  &= j + \max\left\{ 0, e-4+\left\lceil \frac{3-j}{e}\right\rceil \right\}.
\end{align*}
When \(e \geq 5\), this latter value is $j+e-3$ if $0\leq j \leq 2$, $j+e-4$ if $3\leq j \leq e+2$, $j+e-5$ if $e+3\leq j \leq 2e+2$ etc.
By contrast, the colevel lower bound given by Theorem~\ref{theorem:positive} is $j$.
\end{itemize}
\end{example}

\section*{Acknowledgment}
As noted, Theorem~\ref{theorem:positive} has also been independently
obtained by Rai and Shuddhodan as a by-product of~\cite{rs23}, using a
similar Lefschetz approach.

We are very grateful to an anonymous referee for identifying an error in the proof
of Theorem~\ref{theorem:projective-case} in an earlier draft of the
paper.

DZ thanks Hélène Esnault for her suggestion---made during the AMS
Summer Institute in Salt Lake City---to explore higher divisibility of
Frobenius eigenvalues in the degenerate case.  Both authors express
their gratitude to Esnault for her useful correspondence.

\small
\bibliographystyle{amsalpha}
\bibliography{ew}%

\footnotesize

\textsc{Daqing Wan}\\
\textsc{Center for Discrete Mathematics, Chongqing University, Chongqing 401331 China.}\\
\textsc{Department of Mathematics, University of California, Irvine, CA
92697-3875, USA.}\\
\textit{Email Address:} \texttt{dwan@math.uci.edu}

\textsc{Dingxin Zhang} \\
\textsc{YMSC, Tsinghua University, Beijing 100086, China.}\\
\textit{Email Address:} \texttt{dingxin@tsinghua.edu.cn}

\end{document}